\newtheorem{thm}{Theorem}
\newtheorem{prop}{Proposition}
\newtheorem{lemma}{Lemma}
\newtheorem{cor}{Corollary}
\newcommand{\spin}{\mathfrak{s}}
\theoremstyle{definition}
\newtheorem{defn}{Definition}
\theoremstyle{remark}
\newtheorem{remark}{Remark}
\newtheorem{example}{Example}
    \def\HSt{%
       \setbox0=\hbox{$\widehat{\mathit{HS}}$}
       \setbox1=\hbox{$\mathit{HS}$}
       \dimen0=1.1\ht0
       \advance\dimen0 by 1.17\ht1
       \smash{\mskip2mu\raise\dimen0\rlap{%
          \begin{turn}{180}
              {$\widehat{\phantom{\mathit{HS}}}$}
           \end{turn}} \mskip-2mu    
                \mathit{HS}
    }{\vphantom{\widehat{\mathit{HS}}}}{}}
    \def\HMt{%
       \setbox0=\hbox{$\widehat{\mathit{HM}}$}
       \setbox1=\hbox{$\mathit{HM}$}
       \dimen0=1.1\ht0
       \advance\dimen0 by 1.17\ht1
       \smash{\mskip2mu\raise\dimen0\rlap{%
          \begin{turn}{180}
              {$\widehat{\phantom{\mathit{HM}}}$}
           \end{turn}} \mskip-2mu    
                \mathit{HM}
    }{\vphantom{\widehat{\mathit{HM}}}}{}}
    \newcommand{\HSb}{\overline{\mathit{HS}}}
\newcommand{\HSf}{\widehat{\mathit{HS}}}
\newcommand{\CPbar}{\overline{\mathbb{C}P}^2}
\newcommand{\Pin}{\mathrm{Pin}(2)}
\newcommand{\Cr}{\mathfrak{C}}
\newcommand{\Rin}{\mathcal{R}}
\newcommand{\ztwo}{\mathbb{F}}
\newcommand{\T}{\mathcal{T}}
\newcommand{\V}{\mathcal{V}}
\begin{document}

\title{The surgery exact triangle in $\text{PIN}(2)$-monopole Floer homology} 

\author{Francesco Lin}
\address{Department of Mathematics, Massachusetts Institute of Technology} 
\email{linf@math.mit.edu}

\begin{abstract}
We prove the existence of an exact triangle for the $\Pin$-monopole Floer homology groups of three manifolds related by specific Dehn surgeries on a given knot. Unlike the counterpart in usual monopole Floer homology, only two of the three maps are those induced by the corresponding elementary cobordism. We use this triangle to describe the Manolescu correction terms of the manifolds obtained by $(\pm1)$-surgery on alternating knots.
\end{abstract}
\maketitle

\section*{Introduction}
The goal of this paper is to describe the relation between the $\Pin$-monopole Floer homology groups of three manifolds which are obtained from a given one by Dehn surgery on a knot. $\Pin$-monopole Floer homology is a gauge-theoretic invariant of closed connected and oriented three manifolds. It was introduced by the author in \cite{Lin} as the analogue of Manolescu's $\Pin$-equivariant Seiberg-Witten Floer homology groups for rational homology spheres (\cite{Man2}) in the context of Kronheimer and Mrowka's monopole Floer homology (\cite{KM}). In particular it can be used to give an alternative disproof of the long standing Triangulation Conjecture, see also \cite{Man3} for a nice survey.
\par
Unlike Manolescu's construction, the definition in \cite{Lin} works for every closed oriented connected three manifold $Y$. It associates in a functorial way to each such $Y$ three groups fitting in a long exact sequence
\begin{equation}\label{longex}
\dots\stackrel{i_*}{\longrightarrow}\HSt_{\bullet}(Y)\stackrel{j_*}{\longrightarrow} \HSf_{\bullet}(Y)\stackrel{p_*}{\longrightarrow} \HSb_{\bullet}(Y)\stackrel{i_*}{\longrightarrow}\dots
\end{equation}
which are read respectively \textit{H-S-to}, \textit{H-S-from} and \textit{H-S-bar}. These are also (relatively) graded topological modules over the ring 
\begin{equation*}
\Rin=\ztwo[[V]][Q]/(Q^3)
\end{equation*}
where $V$ and $Q$ have degree respectively $-4$ and $-1$ and $\ztwo$ is the field with two elements. The ring $\Rin$ should be thought as the completion (with reverse gradings) of the cohomology of the classifying space of 
\begin{equation*}
\Pin= S^1\times jS^1\subset\mathbb{H}.
\end{equation*}
These objects are obtained by studying the negative gradient flow of the Chern-Simons-Dirac functional on the three manifold from a Floer-theoretic point of view. For a general spin$^c$ structure $\spin$, the equations have an $S^1$ symmetry, but when $\spin$ is induced by a genuine spin structure quaternionic geometry comes into play and the equations acquire a $\Pin$ symmetry. $\Pin$-monopole Floer homology is then constructed by suitably exploiting this extra input.
\\
\par
In the present paper we develop in this setting one of the essential features of Floer homology theories for three manifolds, namely \textit{surgery exact triangles}. Their construction dates back to Floer's original instanton invariants \cite{Flo}, and they turn out to be a key tool for topological applications of Floer theories to three manifold topology. For example, the version for monopole Floer homology is introduced in \cite{KMOS} and is used to prove a conjecture of Gordon (\cite{Gor}) regarding a surgery characterization of the unknot.
\par
Suppose we are given a connected compact oriented three manifold $Z$ with torus boundary $\partial Z$, and let $\gamma_i$, $i=1,2,3$ be oriented simple closed curves having intersection numbers
\begin{equation*}
\gamma_1\cdot \gamma_2=\gamma_2\cdot\gamma_3=\gamma_3\cdot\gamma_1=-1.
\end{equation*}
Call $Y_i$ the three manifold obtained by Dehn filling $\partial Z$ along $\gamma_i$. Associated to this data there is a canonical cobordism $W_i$ from $Y_i$ to $Y_{i+1}$ given by a single $2$-handle attachment along a suitably framed copy of the knot. The key observation for our purposes is that among these three cobordisms exactly two are spin, while the third is not. The typical examples of such triples are given by
\begin{equation*}
\infty, p,p+1\quad\text{and}\quad
0, 1/(q+1),1/q
\end{equation*}
surgeries on a knot in the three sphere, where both $p$ and $q$ are integers. In the first case the non spin cobordism is $W_1$ if $p$ is odd and $W_3$ if $p$ is even. In the second case the non spin cobordism is always $W_2$. The following is then main result of the paper.
\begin{thm}\label{exacttr}
Suppose that the non spin cobordism is $W_3$. There exists a map
\begin{equation*}
\check{F}_3: \HSt_{\bullet}(Y_3)\rightarrow \HSt_{\bullet}(Y_1)
\end{equation*}
of $\Rin$-modules such that the triangle
\begin{center}
\begin{tikzpicture}
\matrix (m) [matrix of math nodes,row sep=2em,column sep=1.5em,minimum width=2em]
  {
  \HSt_{\bullet}(Y_2) && \HSt_{\bullet}(Y_3)\\
  &\HSt_{\bullet}(Y_1) &\\};
  \path[-stealth]
  (m-1-1) edge node [above]{$\HSt_{\bullet}(W_2)$} (m-1-3)
  (m-2-2) edge node [left]{$\HSt_{\bullet}(W_1)$} (m-1-1)
  (m-1-3) edge node [right]{$\check{F}_3$} (m-2-2)  
  ;
\end{tikzpicture}
\end{center}
is exact. The map $\check{F}_3$ is uniquely defined for each pair of three manifolds $Y_3$, $Y_1$ such that the latter is obtained by Dehn surgery and the corresponding elementary cobordism given by a $2$-handle attachment is not spin. The same statement holds for the from and bar versions.
\end{thm}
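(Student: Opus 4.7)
The plan is to adapt the surgery triangle argument of Kronheimer--Mrowka--Ozsv\'ath--Szab\'o \cite{KMOS} to the $\Pin$-equivariant setting. Since two of the three cobordisms, $W_1$ and $W_2$, are spin, the induced maps $\HSt_{\bullet}(W_1)$ and $\HSt_{\bullet}(W_2)$ of $\Rin$-modules are immediately available from the general functoriality of $\Pin$-monopole Floer homology established in \cite{Lin}; the new object I need to construct is the substitute map $\check{F}_3$ for the non-existent $\HSt_{\bullet}(W_3)$.

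To construct $\check{F}_3$, I would enlarge $W_3$ to a spin four-manifold $W_3^{\sharp}$ with the same boundary by an auxiliary stabilization---for example, by adjoining a cancelling pair of handles that kills the obstruction $w_2(W_3)$, or by using the multi-surgery picture so that $W_3^{\sharp}$ is realized as a composite of two elementary spin cobordisms. The count of $\Pin$-symmetric Seiberg--Witten trajectories on $W_3^{\sharp}$, summed over spin$^c$ structures in the standard functorial way, then defines the map $\check{F}_3:\HSt_{\bullet}(Y_3)\to\HSt_{\bullet}(Y_1)$. The uniqueness claim follows by interpolating between two such stabilizations via a $\Pin$-equivariant five-dimensional cobordism of cobordisms, in the style of the invariance proofs for closed four-manifold Seiberg--Witten invariants.

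Exactness of the triangle is then proved following \cite{KMOS}. First, I would show that the three composite maps around the triangle are chain-null-homotopic by exhibiting chain homotopies coming from counts on the composite four-manifolds $W_{i+1}\cup W_i$ equipped with one-parameter families of metrics that stretch the common boundary. Next, I would close the picture into a link-surgery three-manifold $Y_L$ obtained by filling parallel copies of the knot along the three slopes $\gamma_1,\gamma_2,\gamma_3$, and identify the three maps of the triangle with edges of a small multi-surgery complex computing $\HSt_{\bullet}(Y_L)$. Exactness then reduces to a local computation in a neighbourhood of a knot in a solid torus, which I would verify using the $\Pin$-monopole Floer homology computations for lens spaces carried out in \cite{Lin}. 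The from and bar versions follow by naturality of the long exact sequence \eqref{longex} with respect to cobordism maps.

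The principal obstacle is the $\Pin$-equivariant chain-level bookkeeping throughout the argument, especially the analysis of reducible Seiberg--Witten solutions. Reducibles carry the full $\Pin$-symmetry, they drive the $Q$-action on $\HSt_{\bullet}$, and they appear precisely at the boundary of the moduli spaces used to build the chain null-homotopies; one must verify that their contributions cancel in a way compatible with $\Pin$-equivariance. A secondary difficulty is that the local model computation, a relatively clean count on lens spaces in the $S^1$-equivariant setting of \cite{KMOS}, becomes more delicate in the $\Pin$-setting because reducibles now contribute to higher $Q$-degrees, and the relevant combinatorics must be unwound carefully.
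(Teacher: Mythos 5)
There are two genuine gaps here, and they are precisely the points where this theorem differs from the $S^1$-equivariant triangle of \cite{KMOS}. First, your construction of $\check{F}_3$ by passing to a spin stabilization $W_3^{\sharp}$ and counting $\Pin$-symmetric solutions on it is not viable: the obstruction $w_2(W_3)\neq 0$ cannot be removed without changing the four-manifold, different stabilizations have no reason to give the same map, and, more to the point, the natural geometric candidate already fails. The composite $W_1\cup_{Y_2}W_2$ from $Y_1$ to $Y_3$ is $\overline{W}_3$ blown up at a point, and by the blow-up formula (Proposition \ref{blowup}) a non-self-conjugate or non-spin situation forces the induced map to vanish, while the examples in Section \ref{computations} show that the cobordism-induced map of $W_3$ itself (a map involving $Q^2$ times a power series in $V$) does \emph{not} complete the triangle. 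In the actual proof $\check{F}_3$ is not induced by any cobordism at all: it is the connecting homomorphism of the iterated mapping cone $\check{C}$ of $\check{f}_1,\check{f}_2$ and the nullhomotopy $\check{H}_1$ (Lemma \ref{triangledetection}), composed with the inverse of the isomorphism $\delta$ that exists once one proves $H_*(\check{C})=0$; its uniqueness is a naturality statement for mapping cones under change of metric and perturbation, proved with second-order homotopies over pentagon families, not an invariance statement for a stabilized cobordism count.

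Second, your exactness scheme assumes that all three composites around the triangle are chain-null-homotopic, and this is false in the $\Pin$ setting: only $\check{f}_2\circ\check{f}_1$ is null-homotopic (because the blown-down cobordism $\overline{W}_3$ is not spin), while the homotopies $\check{H}_2,\check{H}_3$ for the other two composites terminate in nonzero chain maps $\check{g}_n$ computing blown-up maps of the form $Q^2V^{k}$ (Lemma \ref{homotopies}); this mod-four periodicity is exactly the new difficulty. Consequently the standard KMOS argument must be modified: one builds the map $\check{\varphi}=\check{\partial}\check{G}+\check{G}\check{\partial}$ from a pentagon of metrics on the triple composite (with hypersurfaces $Y_2,R_1,Y_3,S_1,S_2$), proves the key moduli statement on the punctured $\CPbar$-piece $N_1^*$ (Lemma \ref{mainchain}, resting on the unique-ASD-connection argument and the equivariant perturbation analysis of Proposition \ref{modulib2p}), and then shows $\check{\varphi}$ is a quasi-isomorphism via a filtration/spectral-sequence argument in which the diagonal terms $\check{L}_{i,1}$ are invertible and the corrections $\check{L}_{i,0}+\check{h}_i$ are nilpotent thanks to the $\Rin$-module structure and the action of $[K]\in\Lambda^*(H_1(Y)/\mathrm{Tor}\otimes\ztwo)$. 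Your proposed reduction to a link-surgery complex and a lens-space local computation plays no role in the paper and would not by itself address the failure of the composites to be null-homotopic; without confronting that point the triangle-detection input is simply unavailable.
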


The map $\check{F}_3$ in the statement above is genuinely different from the one induced by the cobordism $W_3$ as defined in \cite{Lin}. In fact already in simple examples (see Section \ref{computations}) the triangle where all the maps are the ones induced by cobordisms the composite maps are not necessarily zero. This is due to the fact that in $\Pin$-monopole Floer homology there are interesting modulo four periodicity phenomena to take account of (see for example the blow-up formula in Section \ref{blowups}). Unfortunately, we cannot provide a more geometric interpretation of this map at the moment.

\vspace{0.5cm}

Before discussing examples of the surgery exact triangle, we compute the $\Pin$-monopole Floer homology groups of the homology spheres obtained by surgery on the trefoil knot. As these are (up to orientation reversal) the Brieskorn spheres $\Sigma(2,3,6n\pm1)$, we recover in our setting the results in \cite{Man2}. In the statement we adopt the following notation. For any rational number $d$ let $\V^+_d$ be the $\ztwo [[V]]$-module $\ztwo[V^{-1},V]]/V\ztwo [[V]]$, where the grading is shifted so that the element $1$ has degree $d$. The multiplication by $V$ has degree $-4$, and the double square brackets indicate that we are considering a quotient of the ring of Laurent power series. We will refer as these $\ztwo [[V]]$-modules simply as \textit{towers}, and they will arise as the image of the map $i_*$ in the long exact sequence (\ref{longex}). We also denote a trivial $\Rin$-summand of the form $\ztwo^k$ all concentrated in degree $d$ by $\ztwo^k\langle d\rangle$.

\begin{thm}\label{trefoil}
We have for $k\geq0$ the isomorphisms of graded $\Rin$-modules:
\begin{align*}
\HSt_{\bullet}(\Sigma(2,3, 12k+5))&= \V^+_4\oplus \V^+_3\oplus\V^+_2\oplus \ztwo^k\langle1\rangle\\
\HSt_{\bullet}(\Sigma(2,3, 12k+1))&=\V^+_2\oplus \V^+_1\oplus\V^+_0\oplus\ztwo^k\langle-1\rangle
\end{align*}
where the action of $Q$ (which has degree $-1$) is an isomorphism from the first tower to the second second tower, an isomorphism from the second tower to the third tower, and zero otherwise. The direct sum of the three towers is the image of $i_*$. Similarly, for $k>0$ we have:
\begin{align*}
\HSt_{\bullet}(\Sigma(2,3, 12k-1))&= \V^+_2\oplus\V^+_1\oplus \V^+_4\oplus \ztwo^{k-1}\langle1\rangle\\
\HSt_{\bullet}(\Sigma(2,3, 12k-5))&= \V^+_0\oplus\V^+_{-1}\oplus\V^+_2\oplus \ztwo^{k-1}\langle-1\rangle
\end{align*}
where the action of $Q$ (which has degree $-1$) is an isomorphism from the first tower to the second second tower, maps the second tower onto the third tower, and is zero otherwise. Again the direct sum of the three towers is the image of $i_*$.
\end{thm}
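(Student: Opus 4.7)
The plan is to argue by induction on $k$ via the surgery exact triangle of Theorem~\ref{exacttr} applied to the triples $(0, 1/(n+1), 1/n)$-surgery on the trefoil $T\subset S^{3}$. With a fixed orientation convention, $\pm 1/n$-surgery on $T$ realizes the Brieskorn sphere $\Sigma(2,3,6n \mp 1)$, so every homology sphere in the statement appears as a Dehn filling in such a triple, while the third filling is the fixed $0$-surgery $Y_{0}$ on $T$. According to the introduction the non-spin cobordism in these triples is $W_{2}$, so after a cyclic relabeling Theorem~\ref{exacttr} produces, for each $n$, an exact triangle relating $\HSt_{\bullet}(Y_{0})$ and $\HSt_{\bullet}$ of two consecutive Brieskorn spheres, and similarly for the from- and bar-versions. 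Exactly one of the three maps in each triangle is the ``exotic'' $\check{F}$ rather than a cobordism map.

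First I collect base data. $\HSt_{\bullet}(S^{3})$ is given in~\cite{Lin}; the small Brieskorn spheres $\Sigma(2,3,5)$, $\Sigma(2,3,7)$, $\Sigma(2,3,11)$ (the $k=0,1$ cases) are handled either by a single application of the triangle with $S^{3}$ as one of the Dehn fillings, or directly, since the Seiberg--Witten moduli space on a Seifert fibered homology sphere is Morse--Bott with explicitly enumerable irreducible critical points. The input $\HSt_{\bullet}(Y_{0})$ is an independent computation: $Y_{0}$ has $b_{1}=1$ and a unique torsion spin$^{c}$ structure, and is amenable to the standard $\Pin$-Floer machinery of~\cite{Lin}.

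With these in hand, the inductive step propagates: knowing $\HSt_{\bullet}(Y_{0})$ and $\HSt_{\bullet}(\Sigma(2,3,6n\mp 1))$ determines $\HSt_{\bullet}(\Sigma(2,3,6(n+1)\mp 1))$ up to extensions. I run this in parallel for the three versions $\HSt,\HSf,\HSb$, tying them together with the long exact sequence~\eqref{longex}. On a rational homology sphere $\HSb_{\bullet}$ is universally determined by the $\Pin$-equivariant cohomology of a point, which forces $\mathrm{im}(i_{*})$ to consist of precisely three towers in the grading pattern appearing in the statement.

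The main obstacle will be the last mile: extracting the precise $\Rin$-module structure — in particular pinning down the $Q$-action between the three towers and the placement of the $\ztwo^{k}\langle\pm 1\rangle$ summand — since the triangle controls dimensions and gradings only up to extension. For this I would combine two ingredients: \textbf{(i)} the rigidity coming from $\HSb_{\bullet}$ and the $\Pin$-cohomology of a point, which already determines two of the three $Q$-maps between consecutive towers; \textbf{(ii)} the $\mathrm{mod}\,4$ periodicity of the theory, which distinguishes the four residue classes $6n\pm 1 \pmod{12}$ and explains why the remaining $Q$-map is an isomorphism in the $\{12k+5,12k+1\}$ family but merely surjective in the $\{12k-1,12k-5\}$ family. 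Carefully tracking the absolute grading shifts of the cobordism maps and of $\check{F}$ through the induction should then match the additional $\ztwo^{k}$ or $\ztwo^{k-1}$ summands to the correct gradings.
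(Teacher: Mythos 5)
Your strategy (induct on $n$ through the $0,1/(n+1),1/n$ triangles on the trefoil) is genuinely different from the paper's, but as written it has a real gap precisely at the point you yourself flag as ``the last mile''. In the inductive step the unknown group sits in a short exact sequence
\begin{equation*}
0\rightarrow \mathrm{coker}\bigl(\HSt_{\bullet}(W')\bigr)\rightarrow \HSt_{\bullet}(Y_{1/(n+1)})\rightarrow \ker\bigl(\HSt_{\bullet}(W')\bigr)\rightarrow 0,
\end{equation*}
so you must (a) identify the cobordism map $W'$ between the two known groups, including its effect on the finite $\ztwo^{k}$ summands, which is not forced by degree reasons alone since those summands live in degrees $\pm1$ where $\HSt_{\bullet}(Y_0)$ is nonzero; and (b) solve the resulting extension problem of graded $\Rin$-modules. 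Neither step is carried out: ingredient (i) (the three-tower structure of $\mathrm{im}\,i_*$ with $Q$ mapping tower onto tower) is the general structural fact already stated in the Introduction and carries no information about the absolute heights of the towers, which is exactly the content of the theorem; and ``$\mathrm{mod}\,4$ periodicity'' is not a tool that by itself selects among the possible extensions. The Remark following Proposition \ref{gysin} (the $-\Sigma(2,3,11)\#-\Sigma(2,3,11)$ example) shows that this kind of ambiguity is genuine: two different $\Rin$-modules, with different correction terms, can be compatible with the same rank-and-grading data. There is also a circularity risk: the paper's own identification of the exotic map $\check F$ and of the cobordism maps in these very triangles (Section \ref{computations}, and Lemma \ref{arfmap}) is carried out \emph{after} and \emph{using} the groups of Theorem \ref{trefoil}, by degree arguments that need both source and target known; in your induction only one side is known, so you would need an independent a priori determination of $\check F$, which the paper explicitly says it cannot describe geometrically.

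For contrast, the paper avoids the triangle entirely here: it feeds the known $\HMt_{\bullet}(\pm\Sigma(2,3,6n\pm1))$ (a single tower plus a finite part concentrated in one degree) into the Gysin exact sequence relating $\HSt_{\bullet}$ and $\HMt_{\bullet}$, and Proposition \ref{gysin} shows that for modules of this shape the abstract Gysin sequence is \emph{unique}, which pins down the towers, the $Q$-action and the placement of the finite part in one stroke; Poincar\'e duality then handles the orientations. If you want to salvage your route, you would need either an explicit computation of the relevant maps on reducibles plus an argument controlling their behavior on the irreducible-generated finite parts, or an algebraic rigidity statement playing the role of Proposition \ref{gysin} for the mapping-cone extension problem; without one of these the induction does not close.
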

The idea behind the computation is that if the usual monopole Floer homology $\HMt_{\bullet}$ is simple enough, the $\Pin$ counterpart can be determined in a purely algebraic way from the Gysin exact sequence
\begin{equation}
\dots\stackrel{\cdot Q}{\longrightarrow} \HSt_k(Y)\stackrel{\iota_*}{\longrightarrow} \HMt_k(Y)\stackrel{\pi_*}{\longrightarrow} \HSt_k(Y)\stackrel{\cdot Q}{\longrightarrow}\HSt_{k-1}(Y)\stackrel{\iota_*}{\longrightarrow}\dots
\end{equation} 
introduced in Section $4.3$ of \cite{Lin}. This granted, we will discuss how the Floer groups of this manifolds fit in the surgery exact triangle, and this will provide a model for more interesting computations. Similarly, we compute the $\Pin$-monopole Floer homology groups of the homology spheres obtained by surgery on the figure eight knot.

\begin{thm}\label{figure8}
Denote by $E_n$ the manifold obtained by $1/n$ surgery on the figure eight knot. Let $\spin_0$ be the only self-conjugate spin$^c$ structure on $E_0$. Then we have the isomorphism of graded $\Rin$-modules:
\begin{equation*}
\HSt_{\bullet}(E_0,\spin_0)\cong \V^+_1\oplus \V^+_0\oplus \V^+_{-1}\oplus \V^+_2
\end{equation*}
where the action of $Q$ (which has degree $-1$) is an isomorphism from the first tower to the second, maps the third tower onto the forth tower, and zero otherwise. The group for the other spin$^c$ structures vanishes, and the map $i_*$ is surjective.
Furthermore we have for $k\geq 0$
\begin{equation*}
\HSt_{\bullet}(E_{2k+1})\cong \V^+_{0}\oplus \V^+_{-1}\oplus\V^+_{2}\oplus \ztwo^k\langle-1\rangle
\end{equation*}
and similarly for $k>0$
\begin{equation*}
\HSt_{\bullet}(E_{2k})\cong \V^+_2\oplus \V^+_1\oplus\V^+_0\oplus \ztwo^k\langle-1\rangle.
\end{equation*}
Here the action of $Q$ is the same as the analogous modules appearing in Theorem \ref{trefoil}, and the image of $i_*$ consists exactly of the direct sum of the three towers.
\end{thm}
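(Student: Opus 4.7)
The plan is to mirror the strategy used in the proof of Theorem \ref{trefoil}: once the ordinary monopole Floer homology $\HMt_{\bullet}$ of the figure eight surgeries is in hand, the Gysin exact sequence
\begin{equation*}
\dots\stackrel{\cdot Q}{\longrightarrow} \HSt_k(Y)\stackrel{\iota_*}{\longrightarrow} \HMt_k(Y)\stackrel{\pi_*}{\longrightarrow} \HSt_k(Y)\stackrel{\cdot Q}{\longrightarrow}\HSt_{k-1}(Y)\stackrel{\iota_*}{\longrightarrow}\dots
\end{equation*}
together with the $\Rin$-module structure pins down $\HSt_{\bullet}$ essentially uniquely. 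The ordinary monopole Floer homology of the $E_n$ is well understood via the surgery formula, since the figure eight knot has the simple Alexander polynomial $-t+3-t^{-1}$.

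For the homology spheres $E_n$ with $n \geq 1$, the input $\HMt_{\bullet}(E_n)$ consists of a single $\ztwo[[U]]$-tower (coming from the unique reducible) together with a finite-dimensional irreducible contribution of total dimension proportional to $n$, shifted in grading according to the parity of $n$. Feeding this into the Gysin sequence, the reducible tower splits into three $\HSt$-towers linked by the $Q$-action, exactly as in the trefoil computation, while the irreducible piece contributes the finite summand $\ztwo^k\langle d\rangle$; the parity of $n$ then yields the two formulas in the statement.

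The case $E_0$ in the self-conjugate spin$^c$ structure $\spin_0$ is more delicate because $b_1(E_0) = 1$ forces a circle of reducibles and hence a Morse-Bott analysis at the reducible locus. As a consequence $\HMt_{\bullet}(E_0, \spin_0)$ contains two $\ztwo[[U]]$-towers mirroring $H^*(S^1)$, together with a finite irreducible contribution again governed by the Alexander polynomial. Running the parallel Gysin analysis, each of these two $\HMt$-towers splits into two $\HSt$-towers, giving four towers in total; the degree constraints and the $\Rin$-module structure then force $Q$ to act as an isomorphism on one pair and as a surjection with one-dimensional kernel at the bottom degree on the other, matching the statement. Surjectivity of $i_*$ and the vanishing of $\HSt_{\bullet}$ outside $\spin_0$ follow from the standard vanishing for non-self-conjugate spin$^c$ structures together with the long exact sequence (\ref{longex}).

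The main obstacle will be the bookkeeping: determining the exact grading of the finite $\ztwo^k\langle d\rangle$ summand and the precise $Q$-action requires tracking the Frøyshov-type grading of the reducible as well as the way the irreducible part of $\HMt$ distributes between $\iota_*$ and $\pi_*$. An independent geometric check is available through Theorem \ref{exacttr} applied to the surgery triples $(0, 1/(q+1), 1/q)$ on the figure eight, which realize the three manifolds $E_0$, $E_{q+1}$, $E_q$ with $W_2$ the non-spin cobordism. Starting from $q=0$ where $Y_3 = S^3$ and proceeding by induction on $q$, this triangle determines $\HSt_{\bullet}(E_{q+1})$ from the already computed $\HSt_{\bullet}(E_0)$ and $\HSt_{\bullet}(E_q)$, providing a confirmation of the formulas parallel to the algebraic computation above.
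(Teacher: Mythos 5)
Your handling of the homology spheres $E_n$, $n\neq 0$, is essentially the paper's: their $\HMt_{\bullet}$ is a single tower plus a finite summand concentrated in one degree, so Proposition \ref{gysin} applies, and the ``bookkeeping'' you defer is precisely the content of that proposition. The genuine gap is the case of $E_0$. There you assert that a ``parallel Gysin analysis'' of $\HMt_{\bullet}(E_0,\spin_0)$ --- which has two towers and a one-dimensional finite piece --- together with degree constraints and the $\Rin$-module structure \emph{forces} the four-tower answer with the stated $Q$-action. This is not established: Proposition \ref{gysin} is proved only for modules of the form $\T^+_{2k}\oplus\ztwo^n$ with the finite part in a single degree adjacent to the bottom of the tower, no uniqueness statement is available for the two-tower module arising from a zero-surgery, and uniqueness cannot be taken for granted, since the Remark following Proposition \ref{gysin} exhibits an only slightly more complicated $\HMt$ for which two non-isomorphic $\Rin$-modules fit an abstract Gysin sequence. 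Moreover the feature you need --- that $Q$ maps the third tower \emph{onto} the fourth with one-dimensional kernel at the bottom, rather than the towers and the bottom class organizing themselves in the Arf$=0$ pattern --- reflects the odd spectral flow between the two reducibles of the zero-surgery, i.e.\ the Arf invariant of the figure eight knot (compare Example \ref{poincare}); a purely algebraic argument would have to prove a two-tower analogue of Proposition \ref{gysin}, which you do not do.

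The paper's proof divides the labor the other way: $\HSt_{\bullet}(E_0,\spin_0)$ is computed not from the Gysin sequence but from the surgery exact triangle for the $\infty,0,1$ triple, i.e.\ $S^3$, $E_0$ and $E_1=\Sigma(2,3,7)$ (the latter known from Theorem \ref{trefoil}), using the vanishing of the reduced groups of $S^3$ and $\Sigma(2,3,7)$, the degrees of the two cobordism maps, and --- crucially --- Lemma \ref{arfmap}, which pins down the third map on the towers precisely because $\mathrm{Arf}=1$; the remaining $E_n$ are then obtained from Proposition \ref{gysin}. For the same reason your proposed ``independent check'' by induction on the triples $(0,1/(q+1),1/q)$ is not yet an argument: exactness of the triangle together with knowledge of two of the three groups does not determine the third group unless you also control at least one of the maps in the relevant degrees, and supplying that control (via Lemma \ref{arfmap}, degree reasons and the $\Rin$-module structure) is exactly the step your proposal leaves out.
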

\vspace{0.5cm}

Recall more in general that for any rational homology sphere $Y$ equipped with a self-conjugate spin$^c$ structure $\spin$ the group $\HSt_{\bullet}(Y,\spin)$, considered as an $\ztwo[[V]]$-module, decomposes as a finite part and the sum of three towers
\begin{equation*}
\V^+_c\oplus \V^+_b\oplus \V^+_a.
\end{equation*}
The action of $Q$ sends the first tower onto the second and the second tower onto the third, and the union of sum of the three towers is the image of the map $i_*$. Manolescu's correction terms are then defined to be the rational numbers
\begin{equation*}
\alpha(Y)\geq\beta(Y)\geq\gamma(Y),
\end{equation*}
all of which have the same fractional part, such that
\begin{equation*}
a=2\alpha(Y),\quad b= 2\beta(Y)+1, \quad c=2\gamma(Y)+2.
\end{equation*}
The inequalities between these quantities follow from the module structure. These are rational lifts of the Rokhlin invariant of $(Y,\spin)$ which are invariant under spin homology cobordisms, and they are integers in the case of a genuine homology sphere. The direct sum of the three towers is an $\Rin$-submodule whose abstract isomorphism class as an absolutely graded $\Rin$-module will be denoted by $\mathcal{S}^+_{\alpha,\beta,\gamma}$. We call this a \textit{standard $\Pin$-module}. For example, the four direct sums of towers appearing in the statement of Theorem \ref{trefoil} are respectively
\begin{equation*}
\mathcal{S}^+_{1,1,1},\quad \mathcal{S}^+_{0,0,0}\quad \mathcal{S}^+_{2,0,0},\quad \mathcal{S}^+_{1,-1,-1}.
\end{equation*}
We can use the surgery exact triangle to determine the Manolescu's correction terms of integral homology spheres obtained by $(\pm1)$-surgery on alternating knots.
\begin{thm}\label{altcorrection}
Let $K$ be an alternating knot with signature $\sigma=\sigma(K)\leq 0$. If $\mathrm{Arf}(K)=0$ then $\alpha,\beta$ and $\gamma$ of the three manifold obtained by $(-1)$-surgery on $K$ are all zero. In the other cases, their value is determined in Table \ref{arf0} and \ref{arf1}.
\end{thm}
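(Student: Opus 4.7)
The plan is to apply the surgery exact triangle of Theorem \ref{exacttr} to the Dehn-filling triple $(Y_1,Y_2,Y_3)=(S^3,S^3_{-1}(K),S^3_0(K))$ arising from the $\infty,-1,0$ slopes on the exterior of $K$. Since $p=-1$ is odd, the non-spin cobordism is $W_1$; the analogue of Theorem \ref{exacttr} with the roles of the $W_i$ cyclically permuted then gives an exact triangle relating the three $\HSt$-groups, two of which are tractable: $\HSt_\bullet(S^3)\cong\mathcal{S}^+_{0,0,0}$ is known, and $\HSt_\bullet(S^3_0(K),\spin_0)$ is computable for alternating knots. The $(+1)$-surgery case reduces to this one by mirroring $K$ and reversing orientation, which replaces $\sigma$ by $-\sigma$.

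First I would determine $\HSt_\bullet(S^3_0(K),\spin_0)$ in the unique self-conjugate spin$^c$ structure. For an alternating knot, by the work of Ozsv\'ath--Szab\'o on thin knots, the Heegaard Floer (equivalently monopole Floer) homology of $S^3_0(K)$ in $\spin_0$ is completely described by the signature $\sigma(K)$ and the Alexander polynomial. Feeding $\HMt_\bullet(S^3_0(K),\spin_0)$ into the Gysin exact sequence and exploiting the structural fact that $Q$ acts as an isomorphism between consecutive towers of the standard $\Pin$-submodule then determines $\HSt_\bullet(S^3_0(K),\spin_0)$ as an $\Rin$-module purely algebraically, in the same spirit as the proof of Theorem \ref{trefoil}.

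Next I would insert these data into the (permuted) surgery exact triangle. This expresses $\HSt_\bullet(S^3_{-1}(K))$ as an extension governed by an explicit map between $\HSt_\bullet(S^3)$ and $\HSt_\bullet(S^3_0(K),\spin_0)$. The three-tower summand $\mathcal{S}^+_{\alpha,\beta,\gamma}$ is, by definition, the image of $i_*$; localizing at $V$ and using the $Q$-module structure together with the grading shifts of the spin cobordism maps $\HSt_\bullet(W_2),\HSt_\bullet(W_3)$ (given by the formula $(c_1^2-2\chi-3\sigma)/4$) pins down the bottom degrees $2\alpha,2\beta+1,2\gamma+2$ of the three towers up to a finite ambiguity. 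This ambiguity is removed by the Rokhlin constraint $\alpha\equiv\beta\equiv\gamma\equiv \mathrm{Arf}(K)\pmod 2$, which is precisely what separates the two tables.

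The main obstacle is that the connecting map in the triangle (the analogue of $\check F_3$) is not induced by a cobordism, so its effect on the extension must be recovered purely algebraically from the exact triangle and the $\Rin$-module structure rather than by any direct geometric argument. Fortunately, once $\HSt_\bullet(S^3_0(K),\spin_0)$ is explicit, $V$-periodicity forces this connecting map to be determined on the three towers by a grading shift depending only on $\sigma(K)$, while any contribution to the finite part does not affect $(\alpha,\beta,\gamma)$; combined with the inequality $\alpha\geq\beta\geq\gamma$ and the Rokhlin congruence, this yields the values listed in Tables \ref{arf0} and \ref{arf1}.
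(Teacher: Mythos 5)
Your overall strategy (surgery triangle plus the Gysin sequence plus the Ozsv\'ath--Szab\'o computation for alternating knots) is the right one, but the two steps you treat as automatic are exactly where the real work lies, and as stated they do not go through. First, you claim that feeding $\HMt_{\bullet}(S^3_0(K),\spin_0)$ into the Gysin sequence determines $\HSt_{\bullet}(S^3_0(K),\spin_0)$ ``purely algebraically, as for Theorem \ref{trefoil}.'' The uniqueness statement of Proposition \ref{gysin} only covers modules consisting of a single tower plus a finite piece concentrated in one degree; for the zero-surgery the relevant group is $\mathcal{T}^+_{-1}\oplus\mathcal{T}^+_{-2\delta(\sigma,0)}\oplus\ztwo^{b_0}$, with two towers, and here the Gysin sequence does \emph{not} pin down the answer: the qualitative shape of $\HSt_{\bullet}(S^3_0(K),\spin_0)$ (four towers as for $0$-surgery on the trefoil in Example \ref{poincare}, versus six towers as for $S^2\times S^1$) depends on the Arf invariant through the mod-four spectral flow of the Dirac operator between the two spin reducibles, information invisible to $\HMt$ together with exactness alone (compare also the Remark after Proposition \ref{gysin}, where two distinct $\Rin$-modules fit the same abstract Gysin sequence). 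The paper's proof deliberately avoids this: it first runs the \emph{usual} surgery triangle to get $\HMt_{\bullet}(S^3_1(K))$, applies Proposition \ref{gysin} there (one tower, so uniqueness holds) to get $\HSt_{\bullet}(S^3_1(K))$, and only then recovers $i_*\bigl(\HSb_{\bullet}(S^3_0(K),\spin_0)\bigr)$ and finally $S^3_{-1}(K)$ by going around the $\Pin$ triangle.

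Second, your determination of $S^3_{-1}(K)$ from the triangle rests on the assertion that the connecting map (the analogue of $\check F_3$, not induced by any cobordism) is ``forced by $V$-periodicity up to a grading shift depending only on $\sigma(K)$,'' with the remaining ambiguity killed by $\alpha\geq\beta\geq\gamma$ and the congruence with $\mathrm{Arf}(K)$. That is not enough: for example for $\sigma=-8k$, $\mathrm{Arf}=1$, $n=1$ the answer is $(-2k+1,-2k-1,-2k-1)$, and the congruence plus the inequalities would equally allow $(-2k-1,-2k-1,-2k-1)$ or $(-2k+1,-2k+1,-2k-1)$; gradings and the module structure alone do not decide which towers the connecting map hits. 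In the paper this is supplied by Lemma \ref{arfmap}, whose content is geometric: the bar-version of the non-cobordism map is computed from the reducible solutions (the analysis in Examples \ref{poincare} and \ref{S237}, resting on Proposition \ref{modulib2p} and the blow-up formula of Proposition \ref{blowup}), and it is precisely this input, depending on $\mathrm{Arf}(K)$, that separates Tables \ref{arf0} and \ref{arf1}. So you would need either to reprove something like Lemma \ref{arfmap}, or to import the spectral-flow argument for the zero-surgery group, before the algebraic bookkeeping you describe can produce the tables.
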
 
\begin{equation}\label{arf0}
\begin{tabular}{|c|c|}
\hline
$\sigma$ & $\mathrm{Arf}=0$, $n=1$\\
\hline
$-8k$ & $-2k,-2k,-2k$ \\
\hline
$-8k-2$ & $-2k,-2k,-2k-2$ \\
\hline
$-8k-4$ & $-2k,-2k-2,-2k-2$ \\
\hline
$-8k-6$ &$-2k-2,-2k-2,-2k-2$\\
\hline
\end{tabular}
\end{equation}
\begin{equation}\label{arf1}
\begin{tabular}{|c|c|c|}
\hline
$\sigma$  &  $\mathrm{Arf}=1$, $n=1$ &   $\mathrm{Arf}=1$,$n=-1$\\
\hline
$-8k$ &  $-2k+1, -2k-1,-2k-1$  & $1, -2k+1, -2k-1$\\
\hline
$-8k-2$ & $-2k-1,-2k-1,-2k-1$  & $1,-2k-1, -2k-1$\\
\hline
$-8k-4$ & $-2k-1,-2k-1,-2k-1$  & $1,-2k-1,-2k-1$\\
\hline
$-8k-6$ & $-2k-1,-2k-1,-2k-3$  & $1,-2k-1,-2k-1$\\
\hline
\end{tabular}
\vspace{0.5cm}
\end{equation}
The main idea is again that the Floer homology of surgeries on alternating knots is simple enough so that the $\Pin$-case can be recovered from the usual one by means of the Gysin exact sequence. Our computation relies on the knowledge of the monopole Floer homology of these spaces. This follows from the results in the context Heegaard Floer homology provided by \cite{OSalt} and the isomorphism between the two theories (due to Kutluhan-Lee-Taubes \cite{HFHM1}, \cite{HFHM2},\cite{HFHM3}, \cite{HFHM4}, \cite{HFHM5} and Colin-Ghiggini-Honda \cite{CGH}, \cite{CGH1}, \cite{CGH2}, \cite{CGH3}).
\\
\par
A nice consequence of this computation is the existence of homology spheres not homology cobordant to any Seifert fibered space. In \cite{Sto} an example consisting of the connected sum of two Seifert fibered spaces is provided. We use the following analogous obstruction.
\begin{prop}\label{sfs}
For a Seifert fibered rational homology sphere $Y$ equipped with a spin structure $\spin$ either $\alpha(Y,\spin)=\beta(Y,\spin)$ or $\beta(Y,\spin)=\gamma(Y,\spin)$.
\end{prop}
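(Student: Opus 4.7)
The approach is to combine the Gysin exact sequence introduced in Section 4.3 of \cite{Lin},
\[
\cdots\stackrel{\cdot Q}{\longrightarrow} \HSt_k(Y,\spin)\stackrel{\iota_*}{\longrightarrow}\HMt_k(Y,\spin)\stackrel{\pi_*}{\longrightarrow}\HSt_k(Y,\spin)\stackrel{\cdot Q}{\longrightarrow}\HSt_{k-1}(Y,\spin)\to\cdots,
\]
with the well-known parity property that for a Seifert fibered rational homology sphere equipped with a self-conjugate spin$^c$ structure, the reduced monopole Floer group $\HMt_{\bullet,\mathrm{red}}(Y,\spin)$ is concentrated in a single integer parity. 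This parity statement follows from Ozsv\'ath--Szab\'o's combinatorial computation of $HF^+$ for plumbed three-manifolds (a Seifert fibered rational homology sphere admits a definite plumbing) combined with the KLT/CGH isomorphism cited in the introduction.

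The Gysin sequence decomposes, for each $k$, into a short exact sequence
\[
0 \to \mathrm{coker}(Q\colon\HSt_{k+1}\to\HSt_k) \to \HMt_k \to \ker(Q\colon\HSt_k\to\HSt_{k-1}) \to 0.
\]
I would then analyse the contributions of the three towers $\V^+_c\oplus\V^+_b\oplus\V^+_a$ in the standard $\Pin$-module tower part of $\HSt(Y,\spin)$. Since $Q$ takes $\V^+_c$ onto $\V^+_b$ and $\V^+_b$ onto $\V^+_a$, the relation $Q^3=0$ forces $Q|_{\V^+_a}=0$. A direct count of degrees shows that $K_b:=\ker(Q|_{\V^+_b})$ has $\ztwo$-dimension $(\alpha-\beta)/2$, supported in degrees $\{b,b+4,\ldots,b+2(\alpha-\beta)-4\}$, and that $K_c:=\ker(Q|_{\V^+_c})$ has $\ztwo$-dimension $(\beta-\gamma)/2$, supported in degrees $\{c,c+4,\ldots,c+2(\beta-\gamma)-4\}$. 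The cokernel-side contribution of $\V^+_c$ combined with the kernel-side contribution of $\V^+_a$ reassembles the $\ztwo[[V]]$-tower portion of $\HMt$, so that the classes $K_b$ and $K_c$, together with the finite part of $\HSt$, are what feed into $\HMt_{\bullet,\mathrm{red}}$.

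The punchline is a parity comparison: since $b=2\beta+1$ and $c=2\gamma+2$ differ by the odd integer $2(\beta-\gamma)-1$, the classes of $K_b$ and of $K_c$ sit in $\HMt_{\bullet,\mathrm{red}}$ in opposite integer parities. The single-parity hypothesis then forces at least one of $K_b$ or $K_c$ to vanish: either $\alpha=\beta$ (if $K_b=0$) or $\beta=\gamma$ (if $K_c=0$), which is the statement of the proposition.

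The most delicate step is checking that $K_b$ and $K_c$ really contribute to $\HMt_{\bullet,\mathrm{red}}$ rather than being absorbed into the tower part of $\HMt$. For $K_b$ this is automatic, since $K_b$ sits in the parity opposite to that of $\V^+_a$ and $\V^+_c$, which are what build the tower of $\HMt$. For $K_c$ one observes that the degrees where $K_c$ is supported coincide with those of the cokernel-side contribution from $\V^+_c$, so at these degrees $\HMt$ carries $\ztwo$-dimension strictly greater than the tower, with the excess $K_c$-classes landing in $\HMt_{\bullet,\mathrm{red}}$.
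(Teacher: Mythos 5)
Your route is genuinely different from the paper's. The paper applies the Mrowka--Ozsv\'ath--Yu structure theorem directly: for a suitable orientation and perturbation all critical points in the blow-down have even degree, and then one analyzes the Floer chain complex of $\HSt_\bullet(Y,\spin)$ to see that the middle and bottom towers are truncated at the same level. You instead push all the geometry into a single input --- that $\HMt_{\bullet,\mathrm{red}}(Y,\spin)$ is concentrated in one parity for Seifert fibered $Y$, which indeed follows from MOY (or the plumbing computations) and the HF = HM isomorphism --- and then run a purely algebraic parity argument through the Gysin sequence. This is an attractive reorganization: it isolates exactly which feature of Seifert geometry is used, and it would prove the stronger algebraic statement that any rational homology sphere with $\HMt_{\mathrm{red}}$ in a single parity satisfies $\alpha=\beta$ or $\beta=\gamma$.

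The $K_b$ half is correct as you wrote it: $K_b\subset\ker(Q)$ because $Q$ preserves the $\Rin$-submodule $i_*(\HSb)$, $K_b$ lives in degrees $\equiv 2\beta+1$, which is the parity opposite to $d$, and the tower $\mathcal{T}^+_d$ vanishes there, so the preimage of $K_b$ under $\pi_*$ lands in $\HMt_{\mathrm{red}}$. But the $K_c$ half has a real gap. You claim that $\V^+_c$ ``contributes on the cokernel side,'' i.e.\ that the generator of $(\V^+_c)_k$ is not in $Q\cdot\HSt_{k+1}$. That is not automatic: $\HSt_{k+1}$ in degree $k+1\equiv c+1\pmod 4$ consists entirely of the finite part $F_{k+1}$, and there is no a priori obstruction to $Q$ mapping $F_{k+1}$ onto $(\V^+_c)_k$. (The relation $Q^3=0$ does not help here, since the generator of $(\V^+_c)_k$ you are worried about is itself in $K_c$, so its $Q$-image already vanishes.) The decomposition of $\HSt$ into towers plus a finite part is only a $\ztwo[[V]]$-module splitting, not an $\Rin$-module splitting, so $Q$ is free to mix $F$ into the towers.

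The fix is to bypass the cokernel entirely and use the naturality of the Gysin sequence under $i_*$: one has the commuting square with $i_*\colon\HMb\to\HMt$, $i_*\colon\HSb\to\HSt$ and the two maps $\pi_*$. Take $y\in\HMt_k$ with $\pi_*(y)$ a generator of $K_c$ at degree $k$; if $y=i_*(y')$ for $y'\in\HMb_k$, then $\pi_*(y)=i_*(\pi_*(y'))$, but in the bar Gysin sequence $\pi_*(\HMb_k)=\ker\bigl(Q\colon\HSb_k\to\HSb_{k-1}\bigr)=0$ in degrees $\equiv c\pmod 4$ (since $Q$ is an isomorphism between the bar towers), contradiction. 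Thus $y\notin i_*(\HMb)=\mathcal{T}^+_d$, so $\HMt_{\mathrm{red}}$ is nonzero in the even parity, and your argument goes through. With this patch the proof is correct.
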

As a consequence of Theorem \ref{altcorrection}, we obtain the following.
\begin{cor}\label{notSFS}
For $k\geq1$ the manifold obtained by $(-1)$-surgery on an alternating knot with signature $-8k$ and Arf invariant $1$ is not homology cobordant to any Seifert fibered space.
\end{cor}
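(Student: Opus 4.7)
The plan is to deduce this directly from Theorem \ref{altcorrection}, Proposition \ref{sfs}, and the spin homology cobordism invariance of the Manolescu correction terms. I first read off from Table \ref{arf1} the invariants of $Y$, the $(-1)$-surgery on an alternating knot with $\sigma=-8k$ and $\mathrm{Arf}=1$: the entries take the form $\alpha(Y)=1$, $\beta(Y)=-2k+1$, $\gamma(Y)=-2k-1$. The key numerical observation is that for every $k\geq 1$ these three integers are pairwise distinct; in particular the inequalities $\alpha(Y)>\beta(Y)$ and $\beta(Y)>\gamma(Y)$ are both strict.

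Suppose for contradiction that $Y$ is homology cobordant to a Seifert fibered rational homology sphere $Y'$. Because $Y$ is an integer homology sphere, any integer homology cobordism $W$ forces $Y'$ to be one as well, and $H^{2}(W;\ztwo)=0$ so that $W$ is automatically spin and identifies the unique spin structures on $Y$ and $Y'$. By the invariance of $\alpha,\beta,\gamma$ under spin homology cobordism, the triple on $Y'$ agrees with that on $Y$ and hence is still pairwise distinct. On the other hand Proposition \ref{sfs} forces $\alpha(Y')=\beta(Y')$ or $\beta(Y')=\gamma(Y')$, a contradiction.

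The argument is essentially a direct assembly of the preceding results, so I do not expect a serious obstacle. The only point that needs some care is confirming the sign convention, namely that $(-1)$-surgery corresponds to the column $n=-1$ of Table \ref{arf1}; this identification is in any case forced by the statement of the corollary, since the column $n=1$ in the row $\sigma=-8k$ satisfies $\beta=\gamma$ and therefore by itself is compatible with the Seifert-fibered obstruction.
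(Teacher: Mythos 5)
Your argument is correct and is precisely the one the paper intends (the paper gives no explicit proof, treating the corollary as an immediate consequence of Theorem \ref{altcorrection} and Proposition \ref{sfs}). You correctly identify the $n=-1$ column of Table \ref{arf1}, verify that $\alpha=1$, $\beta=-2k+1$, $\gamma=-2k-1$ are pairwise distinct for $k\geq 1$, and combine spin homology cobordism invariance of $\alpha,\beta,\gamma$ (noting that any homology cobordism between integer homology spheres is automatically spin) with the Seifert obstruction of Proposition \ref{sfs}.
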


\vspace{0.5cm}

Along the way we will discuss maps on spin cobordisms with $b^+_2=1,2$. We record the following result as we expect it to have interesting topological applications.
\begin{thm}\label{108}
Let $W$ be a smooth spin cobordism between two spin rational homology spheres $(Y_0,\spin_0)$ and $(Y_1,\spin_1)$. If $b_2^+(W)=1$ then the inequalities
\begin{align*}
\alpha(Y_1,\spin_1)&\geq \beta(Y_0,\spin_0)+\frac{1}{8} (b_2^-(W)-1)\\
\beta(Y_1,\spin_1)&\geq \gamma(Y_0,\spin_0)+\frac{1}{8} (b_2^-(W)-1).
\end{align*}
hold. If $b_2^+(W)=2$ then the inequality
\begin{equation*}
\alpha(Y_1,\spin_1)\geq \gamma(Y_0,\spin_0)+\frac{1}{8} (b_2^-(W)-2).
\end{equation*}
holds.
\end{thm}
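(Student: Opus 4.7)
The plan is to analyse the map induced by $W$ on the bar version $\HSb$, which is controlled entirely by the reducible Seiberg--Witten trajectories and hence by the topology of $W$. Recall that for a spin rational homology sphere $(Y, \spin)$ the image of $i_* \colon \HSb \to \HSt$ is the direct sum of the three towers $\V^+_c \oplus \V^+_b \oplus \V^+_a$ with $a = 2\alpha$, $b = 2\beta + 1$, $c = 2\gamma + 2$, and these are cyclically permuted by the action of $Q$ subject to $Q^3 = 0$ in $\Rin$. I would show that a spin cobordism $W$ with $b_2^+(W) = k$ induces on the bar level a map whose leading term is multiplication by $Q^k$ times an appropriate power of $V$; the inequalities then follow from the cyclic shift together with a degree count.

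The central step is the leading-order computation of $\HSb(W) \colon \HSb_\bullet(Y_0, \spin_0) \to \HSb_\bullet(Y_1, \spin_1)$. After inverting $V$, both source and target become modules of rank three over $\ztwo[V, V^{-1}]$ with generators cyclically permuted by $Q$, and the $Q$-equivariance of the cobordism map makes it upper-triangular with respect to the $Q$-filtration. A $\Pin$-equivariant index and wall-crossing analysis on $W$, exploiting the contribution of the $b_2^+(W)$-dimensional bundle of self-dual harmonic 2-forms, should show that the leading term of $\HSb(W)$ is a unit times
\[
Q^{b_2^+(W)} \cdot V^{(b_2^-(W) - b_2^+(W))/8},
\]
whose degree shift encodes the Dirac-index contribution $-\sigma(W)/8$. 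In particular $\HSb(W)$ vanishes whenever $b_2^+(W) \geq 3$, consistent with $Q^3 = 0$.

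Given this leading formula, the translation to tower inequalities is purely algebraic. Via the naturality square $i_* \circ \HSb(W) = \HSt(W) \circ i_*$, the factor $Q^{b_2^+(W)}$ forces each stratum of the $Q$-filtration of $\HSb(Y_0)$ to land in a stratum of $\HSb(Y_1)$ that is shifted by $b_2^+(W)$ positions: for $b_2^+(W) = 1$ the map sends the image of $\V^+_c(Y_0)$ into $\V^+_b(Y_1)$ and the image of $\V^+_b(Y_0)$ into $\V^+_a(Y_1)$, whereas for $b_2^+(W) = 2$ only the contribution from $\V^+_c(Y_0)$ into $\V^+_a(Y_1)$ survives. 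Using that the $V$-action is injective on each tower in sufficiently high degree, the non-vanishing of the leading coefficient together with a degree chase, converting between the bottoms $c, b, a$ and the correction terms $\gamma, \beta, \alpha$, yields exactly the stated bounds between $\alpha(Y_i, \spin_i)$, $\beta(Y_i, \spin_i)$, and $\gamma(Y_i, \spin_i)$.

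The principal obstacle is Step 1: rigorously establishing that the leading term of $\HSb(W)$ involves precisely $Q^{b_2^+(W)}$ (rather than a lower power, which would give strictly weaker inequalities). This is the $\Pin$-equivariant refinement of the classical wall-crossing formula for Seiberg--Witten theory with $b_2^+ \geq 1$, and it demands a careful analysis of how the $\Pin$-action interacts with the bundle of self-dual harmonic 2-forms on $W$. The spin hypothesis on $W$ is essential here, since it is exactly what allows the $\Pin$-symmetry to propagate along the cobordism. Once Step 1 is in place, the remaining argument is a formal consequence of the $\Rin$-module structure on the bar theory and the behaviour of $i_*$.
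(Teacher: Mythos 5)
Your overall strategy is the same as the paper's: identify $\HSb_{\bullet}(W)$, after fixing identifications $\HSb_{\bullet}(Y_i,\spin_i)\cong\mathcal{S}\langle d_i\rangle$, as multiplication by $Q^{b_2^+(W)}V^k$ with nonzero leading coefficient, and then deduce the tower inequalities from naturality with $i_*$ and the $\Rin$-module structure, exactly as in the monotonicity argument for negative definite cobordisms in Section $4.4$ of \cite{Lin}. The difficulty is that the step you yourself flag as the ``principal obstacle'' --- that the leading term contains precisely $Q^{b_2^+(W)}$ with unit coefficient --- is the entire analytic content of Theorem \ref{108}, and your proposal does not prove it; it only asserts that a ``$\Pin$-equivariant wall-crossing analysis'' should produce it. If that coefficient could vanish, no inequality follows at all, so as written the argument has a genuine gap at its central point. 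Moreover the mechanism you gesture at is not quite the right one: what is needed is not a wall-crossing count in a family of metrics or perturbations, but a direct equivariant description of the reducible moduli space on $W$ itself. This is supplied by Proposition \ref{modulib2p} (and its $b_2^+=2$ analogue in the proof of Theorem \ref{108}): after surgering $W$ so that $b_1(W)=0$ and arranging a single reducible solution, the unperturbed reducibles in the blow-up form the $\mathbb{C}P^1$ obtained by projectivizing the quaternionic kernel of the Dirac operator at the spin connection, and this locus is obstructed, never transverse for three dimensional perturbations. One then adds a perturbation of the special form $\eta\cdot\rho(\omega_0^+)$, where $\omega_0^+$ spans the cokernel of $d^+$ (which has dimension $b_2^+(W)$) and $\eta$ is gauge invariant and $\jmath$-odd; the perturbed reducible moduli space is identified with the transverse zero set of $\eta\colon\mathbb{C}P^1\rightarrow\mathbb{R}^{b_2^+(W)}$, and $\jmath$-equivariance with respect to the antipodal action forces this zero set to be an odd number of circles when $b_2^+=1$ and a number of points congruent to $2$ modulo $4$ when $b_2^+=2$. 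It is exactly this parity statement that pins down the nonvanishing $Q$, respectively $Q^2$, leading coefficient, and nothing in your sketch substitutes for it.

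Two smaller points. First, the exponent you wrote, $V^{(b_2^-(W)-b_2^+(W))/8}$, need not be an integer; the correct formulation is multiplication by $Q^{b_2^+}V^k$ for an integer $k$ determined by the intersection form, with the fractional part of the total degree shift absorbed into the grading shifts $d_i$ of the identifications $\HSb_{\bullet}(Y_i,\spin_i)\cong\mathcal{S}\langle d_i\rangle$; the rational offset $\tfrac{1}{8}(b_2^--b_2^+)$ only appears once one converts back to the correction terms. Second, the reduction to $b_1(W)=0$ by surgery and the hypothesis that there is a single reducible for the chosen boundary perturbations are needed to make the $\mathbb{C}P^1$ description valid, and should be stated. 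Once Proposition \ref{modulib2p} and its $b_2^+=2$ counterpart are in hand, your remaining algebraic translation via $i_*\circ\HSb_{\bullet}(W)=\HSt_{\bullet}(W)\circ i_*$ and the cyclic action of $Q$ on the three towers is indeed the same formal argument the paper invokes.
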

This should be thought as a generalization of Donaldson's Theorems B and C (see \cite{DK}) regarding closed spin four manifolds with $b_2^+=1,2$, in the same way as Fr\o yshov's result is a generalization of Donaldson's Theorem A. It is analogous in spirit to Kronheimer's Seiberg-Witten theoretic proof that inspired Furuta's work on the $11/8$-conjecture \cite{Fur}.

\par
\vspace{0.5cm}
\textbf{Acknowledgements.} The author would like to thank his advisor Tom Mrowka for the support throughout the development of this project and Tye Lidman and Ciprian Manolescu for the useful comments. Part of the work was carried over while the author was visiting the University of Pisa. He would like to express his gratitude to them, and in particular Bruno Martelli, for their hospitality. This work was partially supported by NSF grants DMS-0805841 and DMS-1005288.

\vspace{1cm}
\section{Overview of $\text{Pin}(2)$-monopole Floer homology}\label{review}
In this section we provide an overview of the the formal properties of $\Pin$-monopole Floer homology and discuss the key steps of its construction as in Chapter $4$ of \cite{Lin}. For a more detailed introduction to the usual case see the first three chapters of \cite{KM}. Throughout the paper we will always work with coefficients in the field with two elements $\ztwo$.

\vspace{0.7cm}

\textbf{The formal structure.} Let $Y$ be a closed connected oriented three manifold. There is a natural action $\jmath$ of the set of spin$^c$ structures $\mathrm{Spin}^c$ given by complex conjugation. We denote the orbits by $[\spin]$, and call the fixed points of this action \textit{self-conjugate spin$^c$-structures}. The first Chern class of such a spin$^c$ structure $\spin$ is always two-torsion. To each self-conjugate spin$^c$ structure $[\spin]$ we associate the (completed) $\Pin$-monopole Floer homology groups
\begin{equation*}
\HSt_{\bullet}(Y,\spin),\qquad \HSf_{\bullet}(Y,\spin),\qquad \HSb_{\bullet}(Y,\spin).
\end{equation*}
There are analogous cohomological versions. These groups carry a relative $\mathbb{Z}$ grading and an absolute $\mathbb{Q}$ grading. They also carry a structure of topological graded module over the ring
\begin{equation*}
\Rin= \ztwo[[V]][Q]/(Q^3)
\end{equation*}
where the actions of $V$ and $Q$ have degree respectively $-4$ and $-1$. These groups should be thought as computing the middle dimensional homology of an infinite dimensional manifold with boundary $\mathcal{B}^{\sigma}(Y,\spin)/\jmath$. In particular, they compute the homology of the space, the homology relative to the boundary and the homology of the boundary. Indeed they fit in the expected long exact sequence (\ref{longex}), and they satisfy a version Poincar\'e duality with respect to orientation reversal.
\par
In the case $\spin$ is not self-conjugate, we define $\HSt_{\bullet}(Y,[\spin])$ to be the usual monopole Floer homology groups
\begin{equation*}
\HMt_{\bullet}(Y,\spin)\equiv \HMt_{\bullet}(Y,\bar{\spin})
\end{equation*}
which are canonically isomorphic. This can be thought as a $\Rin$ module via the coefficient extension
\begin{equation*}
\Rin\rightarrow \ztwo[[U]]
\end{equation*}
obtained by sending $V$ to $U^2$ and $Q$ to $0$. It is convenient to work with the direct sum of all these groups (all but finitely many of which are trivial), and define
\begin{equation*}
\HSt_{\bullet}(Y)=\bigoplus_{[\spin]\in \mathrm{Spin}^c(Y)/\jmath} \HSt_{\bullet}(Y,[\spin]),
\end{equation*}
and similarly for the other versions. This total group does not carry a relative $\mathbb{Z}$ grading anymore, but has a canonical $\mathbb{Z}/2\mathbb{Z}$-grading.
\\
\par
The basic computation is the case of the three sphere. Recall that we have discussed in the introduction the basic modules $\V^+_d$ and $\mathcal{S}^+_{\alpha,\beta,\gamma}$. We then have
\begin{equation*}
\HSt_{\bullet}(S^3)=\mathcal{S}^+_{0,0,0}=\V^+_2\oplus\V^+_1\oplus \V^+_0
\end{equation*}
where in the second description the action of $Q$ (which has degree $-1$) is an isomorphism from the first tower to the second and from the second tower to the third. Also for a rational number $d$ the module $\V_d$ is defined to be the ring of Laurent power series $\ztwo[V^{-1},V]]$ seen as a $\ztwo[[V]]$-module with the grading shifted so that the element $1$ has degree $d$. We then have the isomorphism of $\Rin$-modules
\begin{equation}\label{S}
\HSb_{\bullet}(S^3)=\V_2\oplus \V_1\oplus \V_0.
\end{equation}
The action of $Q$ is an isomorphism from the first summand to the second and from the second summand to the third. We will denote this absolutely graded $\Rin$-module by $\mathcal{S}$.
The map
\begin{equation*}
i_*:\HSb_{\bullet}(S^3)\rightarrow \HSt_{\bullet}(S^3)
\end{equation*}
is surjective. Finally $\HSf_{\bullet}(S^3)$ can be identified with $\Rin\langle-1\rangle$ where again the braces indicate the grading shift.
\\
\par
These groups also satisfy functoriality properties. If $W$ is a connected cobordism from $Y_0$ to $Y_1$, we obtain a homomorphism of $\Rin$-modules
\begin{equation*}
\HSt_{\bullet}(W): \HSt_{\bullet}(Y_0)\rightarrow \HSt_{\bullet}(Y_1),
\end{equation*}
which also decomposes according to the pairs of conjugate spin$^c$ structures on $W$. Furthermore, if $W_0$ is a cobordism from $Y_0$ to $Y_1$ and $W_1$ is a cobordism from $Y_1$ to $Y_2$, we have the composition law
\begin{equation*}
\HSt_{\bullet}(W_1\circ W_0)=\HSt_{\bullet}(W_1)\circ\HSt_{\bullet}(W_0).
\end{equation*}
A more general functoriality property (following from the work of \cite{Blo}) is the following. Suppose $W$ is a cobordism with several incoming ends $Y_-$ and $Y_1,\dots, Y_n$ and one outgoing end $Y_+$. Then there is an induced map
\begin{equation*}
\HSt_{\bullet}(Y_-)\otimes \HSf_{\bullet}(Y_0)\otimes\cdots\otimes \HSf_{\bullet}(Y_n)\rightarrow \HSt_{\bullet}(Y_+).
\end{equation*}
This holds also when the two \textit{to} groups are replaced by their \textit{from} and \textit{bar} counterparts. As a special case of this general construction, the cobordism $I\times Y$ with a ball removed induces a map
\begin{equation*}
\HSt_{\bullet}(Y)\otimes \Rin\langle -1\rangle\rightarrow \HSt_{\bullet}(Y)
\end{equation*}
where we think of the additional boundary component as an incoming $S^3$ end, and we use this map as the definition of the $\Rin$-module structure. 
\\
\par
As briefly discussed in the Introduction, in the case $Y$ rational homology sphere equipped with a self conjugate spin$^c$ structure $\spin$ the groups have a rather simple structure. In particular we have the isomorphism of \textit{relatively} graded $\Rin$-modules
\begin{equation*}
\HSb_{\bullet}(Y,\spin)\cong\HSb_{\bullet}(S^3)\cong \mathcal{S}
\end{equation*}
Analogously the group $\HSt_{\bullet}(Y,\spin)$ is zero for degrees negative enough and the map $i_*$ is an isomorphism in degrees high enough. This implies that for some $\alpha\geq \beta\geq \gamma$ rational numbers with the same fractional part we have that
\begin{equation*}
i_*\left(\HSb_{\bullet}(Y,\spin)\right)\cong \mathcal{S}^+_{\alpha,\beta,\gamma}
\end{equation*}
as absolutely graded $\Rin$-modules. A very interesting special case is when $Y$ is an actual integral homology sphere. In that case the correction terms are integral lifts of the Rokhlin invariant, and are invariant under homology cobordism. Furthermore
\begin{equation*}
\beta(-Y)=-\beta(Y),
\end{equation*}
where $-Y$ denotes the manifold with opposite orientation, and using these properties one can show that the long standing Triangulation conjecture is false (see \cite{Man2} and \cite{Lin}).

\vspace{0.7cm}

\textbf{Construction of monopole Floer homology.} We quickly describe the construction of monopole Floer homology. Equip $Y$ with a riemannian metric. A spin$^c$ structure on $Y$ is given by a rank two hermitian bundle $S\rightarrow Y$ together with a Clifford multiplication
\begin{equation*}
\rho: TY\rightarrow \mathrm{Hom}(S,S)
\end{equation*}
i.e. a bundle map with the property that
\begin{equation*}
\rho(v)^2=-\|v\|^2\mathrm{Id}_S\text{ for each }v\in TY.
\end{equation*}
We can define the configuration space $\mathcal{C}(Y,\spin)$ consisting of pairs $(B,\Psi)$ where
\begin{itemize}
\item $B$ is a spin$^c$ connection on $S$, i.e. a connection compatible with the Clifford action;
\item $\Psi$ is a spinor, i.e. a section of $S$.
\end{itemize}
The group of automorphism of the Clifford bundle $\mathcal{G}(Y,\spin)$ is given by the set of maps $u$ from $Y$ to $S^1$, and it acts on the configuration space as
\begin{equation*}
u\cdot(B,\Psi)=(B-u^{-1}du, u\cdot\Psi).
\end{equation*}
A configuration with $\Psi\neq 0$, called \textit{irreducible}, has trivial stabilizer, while a configuration $(B,0)$, called \textit{reducible}, has stabilizer $S^1$ given by the constant gauge transformations. The functional on the configuration space used to define the Floer chain complex is the Chern-Simons-Dirac functional, given after a choice of a base spin$^c$ connection $B_0$ by
\begin{equation*}
\mathcal{L}(B,\Psi)=-\frac{1}{8}\int_Y (B^t-B^t_0)\wedge (F_{B^t}+F_{B_0^t})+\frac{1}{2}\int_Y \langle D_B\Psi,\Psi\rangle \mathrm{dvol}.
\end{equation*}
Here $B^t$ denotes the connection induced on the determinant line bundle $\Lambda^2S$, and $D_B$ is the Dirac operator associated to $B$. When $c_1(\spin)$ is not torsion, this functional is invariant only under the identity component of the gauge group, and is well defined with values in $\mathbb{R}/2\pi^2\mathbb{Z}$.
\par
From here, the early approaches to Seiberg-Witten Floer homology went on considering the flow of the $L^2$ formal gradient of the functional $\mathcal{L}$, given by the formula
\begin{equation*}
\mathrm{grad}\mathcal{L}(B,\Psi)=\left( \left(\frac{1}{2}\ast F_{B^t}+\rho^{-1}(\Psi\Psi^*)_0\right)\otimes 1_S, D_B\Psi\right)
\end{equation*}
on the space of irreducible configurations modulo gauge. This caused invariance issues, and the loss of important informations carried by the reducibles. Kronheimer and Mrowka's approach solved this by passing to the blown-up configuration space $\mathcal{C}^{\sigma}(Y,\spin)$. This consists of triples $(B,r,\psi)$ where
\begin{itemize}
\item$B$ is again a spin$^c$ connection;
\item $r$ is a non-negative real number;
\item $\psi$ is a spinor with unit $L^2$ norm.
\end{itemize}
This operation is essentially passing to polar coordinates in the spinor component, and there is a canonical blow-down map $\pi$ given by
\begin{equation*}
(B,r,\psi)\mapsto (B,r\psi).
\end{equation*}
The action of the gauge group on this space is free, and the quotient $\mathcal{B}^{\sigma}(Y,\spin)$ is an infinite dimensional manifold with boundary $\partial \mathcal{B}^{\sigma}(Y,\spin)$ consisting of the reducible configurations. Furthermore the gradient of the Chern-Simons-Dirac functional naturally extends to the blown-up configuration space as the vector field
\begin{equation}\label{blowupgradient}
(\mathrm{grad}\mathcal{L})^{\sigma}(B,r,\psi)=\left(\left(\ast \frac{1}{2}F_{B^t}+r^2\rho^{-1}(\psi\psi^*)_0\right)\otimes 1_S, \Lambda(B,r,\psi)r, D_B\psi-\Lambda(B,r,\psi)\psi)\right),
\end{equation}
where $\Lambda(B,r,\psi)$ is the real valued function $\langle D_B\psi,\psi\rangle_{L^2(Y)}$. Monopole Floer homology is then defined applying the usual ideas of Morse-Witten homology to this vector field on the space $\mathcal{B}^{\sigma}(Y,\spin)$.
\par
There are a few observations to be made. First of all, the vector field $(\mathrm{grad}\mathcal{L})^{\sigma}$ is not the gradient of any natural function on $\mathcal{C}^{\sigma}(Y,\spin)$ with respect to any natural metric. Furthermore it is tangent to the boundary $\partial\mathcal{B}^{\sigma}(Y,\spin)$, so the latter is invariant under the flow. This implies that there are two kinds of reducible critical points: the \textit{stable} ones, for which the Hessian in the direction normal to the boundary is positive, and the \textit{unstable} ones. The unstable manifold of a stable critical point is entirely contained in the boundary, and similarly for the stable manifold of an unstable critical point. In particular a non empty space of trajectories from a stable point to an unstable point is \textit{never} transversely cut out. We say that such a pair is \textit{boundary obstructed}. This implies for example that a one dimensional family of unparametrized trajectories between two irreducible critical points may break into three components, the middle one being a boundary obstructed trajectory (see Figure \ref{obstructed}).
\begin{figure}
  \centering
\def\svgwidth{0.5\textwidth}
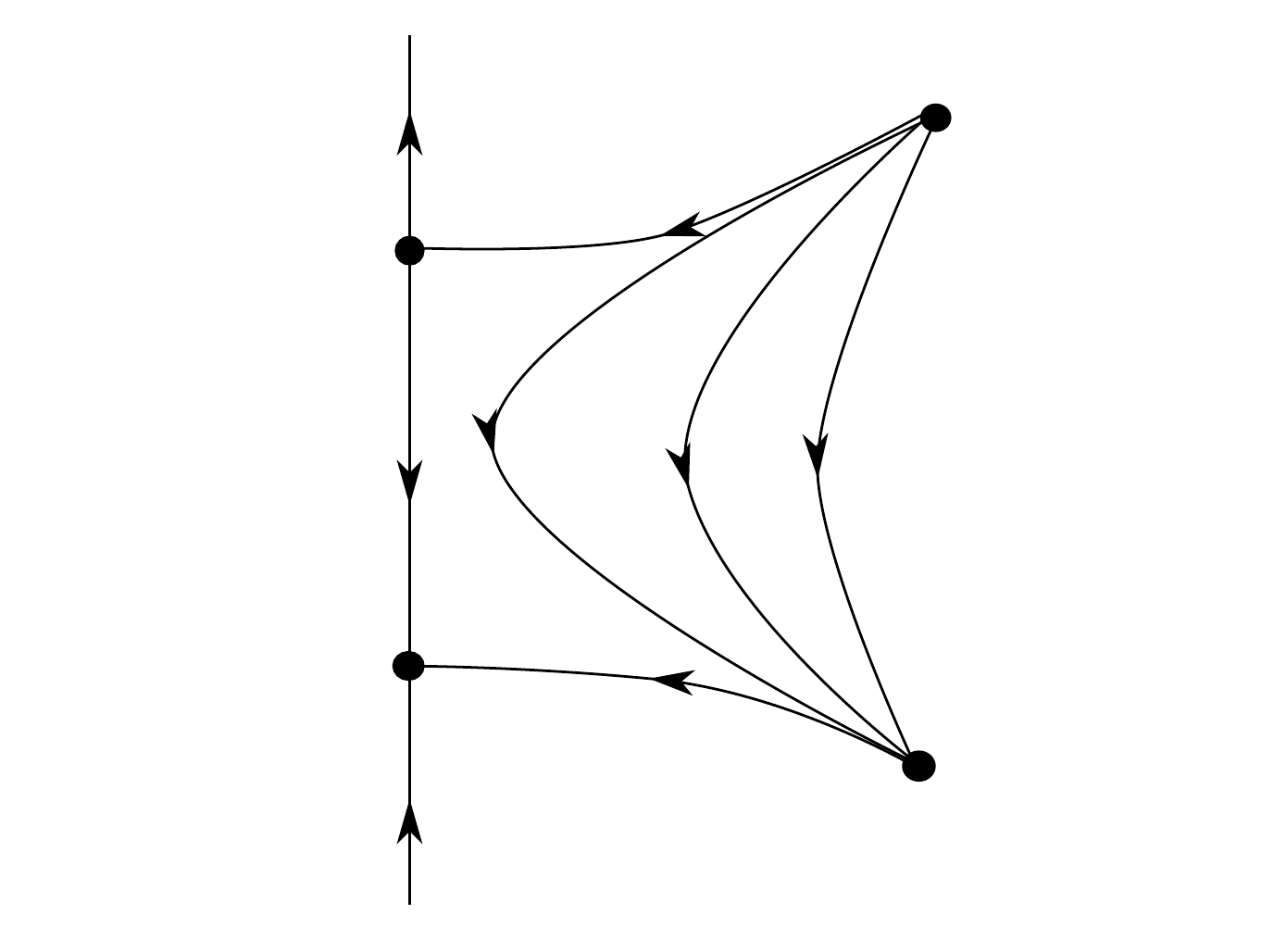
    \caption{A one dimensional family of trajectories limiting to a broken trajectory with three components.}\label{obstructed}
\end{figure} 

Nevertheless for generic perturbations one can define three chain complexes computing the three relevant homologies as follows. Denote by $C^o, C^u, C^s$ the $\ztwo$-vector spaces generated by the irreducible, unstable and stable critical points. One can define the linear maps
\begin{align*}
\partial^o_o&: C^o\rightarrow C^o\\
\partial^o_s&: C^o\rightarrow C^s\\
\partial^u_o&: C^u\rightarrow C^o\\
\partial^u_s&: C^o\rightarrow C^o
\end{align*}
obtained by counting irreducible trajectories in zero dimensional moduli spaces between critical points of a specified type, and similarly
\begin{align*}
\bar{\partial}^s_s&: C^s\rightarrow C^s\\
\bar{\partial}^s_u&: C^s\rightarrow C^u\\
\bar{\partial}^u_s&: C^u\rightarrow C^s\\
\bar{\partial}^u_u&: C^u\rightarrow C^u
\end{align*}
obtained by counting reducible trajectories. Notice that the maps $\partial^u_s$ and $\bar{\partial}^u_s$ count points in different moduli spaces. The three Floer chain complexes are then defined as the $\ztwo$-vector spaces
\begin{equation*}
\check{C}_*=C^o\oplus C^s,\qquad \hat{C}_*=C^o\oplus C^u, \qquad \bar{C}_*=C^s\oplus C^u
\end{equation*}
equipped respectively with the differentials
\begin{equation*}
\check{\partial}=
\begin{bmatrix}
\partial^o_o & \partial^u_o\bar{\partial}^s_u\\
\partial^o_s & \bar{\partial}^s_s+\partial^u_s\bar{\partial}^s_u
\end{bmatrix},\qquad
\hat{\partial}=
\begin{bmatrix}
\partial^o_o & \partial^u_o\\
\bar{\partial}^s_u\partial^o_s & \bar{\partial}^s_u\partial^u_s
\end{bmatrix},\qquad
\bar{\partial}=
\begin{bmatrix}
\bar{\partial}^s_s & \bar{\partial}^s_u \\
\bar{\partial}^u_s & \bar{\partial}^u_u
\end{bmatrix}.
\end{equation*}
The Floer homology groups are the homology groups of these complexes, and are independent (in a functorial way) of the choices made.
\\
\par
We can give a nice description of the critical points in the blow up in the generic case. Recall first of all that the compactness properties of the Seiberg-Witten equations imply that for a generic perturbation there are only finitely many critical points downstairs. The blow down map is a diffeomorphism on the irreducibles, so we have a one-to-one correspondence in this case, so that the irreducible critical points upstairs consist of a finite number of points. On the other hand, it is easy to see from equation (\ref{blowupgradient}) that for each reducible critical point downstairs gives rise to countably collection of stable and unstable critical points. Indeed, each of these corresponds to the quotient by the $S^1$ action of the unit sphere of an eigenspace of the Dirac operator. These are generically all (complex) one dimensional, and the eigenvalues form a discrete sequence of real numbers which is infinite in both directions. The computation of the Floer groups for $S^3$ then readily follows because the positive scalar curvature implies that there are no irreducible critical points for perturbations small enough.
\\
\par
Finally, given a cobordism $W$ between $Y_0$ and $Y_1$ the induced map is defined by counting solutions to the four dimensional Seiberg-Witten equations (in the blow-up of the configuration space) on the manifold $W^*$ obtained by adding cylindrical ends. This generally involves infinitely many moduli spaces, and their sum only makes sense after a suitable completion of the groups with respect to some negative filtration, which we denote using the bullet. The map induced by $W$ and the class $U^d\in\ztwo[[U]]$ is defined by considering the manifold $W_p^*$ with an additional incoming end of the form $(-\infty,0]\times S^3$ obtained by removing a ball (such that the metric is a product near the boundary) and adding a cylindrical end. We can choose the metric such that $S^3$ has positive scalar curvature, and the map is obtained by counting the solutions to the Seiberg-Witten equations on $W_p^*$ that converge to the $d$th unstable critical point at this additional end.

\vspace{0.7cm}

\textbf{The case of a self-conjugate spin$^c$ structure.} In the case the spin$^c$ structure is self-conjugate, or equivalently the spin$^c$ structure is induced by a genuine spin structure, we have two extra features:
\begin{itemize}
\item a preferred base connection $B_0$ with $B_0^t$ flat, the spin connection;
\item a quaternionic structure $j$ on the Clifford bundle $S$, i.e. a complex antilinear automorphism such that $j^2=-\mathrm{Id}$.
\end{itemize}
The latter follows from the observation that $\mathrm{Spin}(3)$ can be identified with $\mathrm{SU}(2)$ and the spinor representation is just the usual action on $\mathbb{C}^2=\mathbb{H}$. We will write the action of $j$ and the multiplication by complex numbers from the right. These two features are compatible in the sense that the Dirac operator $D_{B_0}$ is quaternionic linear. In this case the configuration space $\mathcal{C}(Y,\spin)$ comes with diffeomorphism $\jmath$ given by
\begin{equation*}
\jmath\cdot(B_0+b,\Psi)=(B_0-b,\Psi\cdot \jmath).
\end{equation*}
This induces an involution on the moduli space of configurations $\mathcal{B}(Y,\spin)$. Its only fixed points are the equivalence classes $[B,0]$ with $B$ the spin connection of a spin structure inducing the spin$^c$ structure $\spin$. There are $2^{b_1(Y)}$ such spin structures: for example on $S^2\times S^1$ the two spin structures both induce the only torsion spin$^c$ structure. Similarly, there is an induced involution (still denoted by $\jmath$) on the blown-up moduli space of configurations $\mathcal{B}^{\sigma}(Y,\spin)$ which is fixed point-free.
\\
\par
The main idea is then to do Floer theory in a $\jmath$-invariant fashion, and compute the homology of the quotient $\mathcal{B}^{\sigma}(Y,\spin)/\jmath$. This is done by producing a (Morse-Bott) chain complex with a natural $\mathbb{Z}/2\mathbb{Z}$ action, and consider the homology of the invariant subchain complex. To do this we need to restrict to perturbations that preserve the symmetry of our set up. The generic picture will then be the following. Irreducible critical points are not fixed points of the action downstairs, so they will still be a finite number of points, and furthermore they come in pairs related by the action of $\jmath$. The same thing happens for reducible critical points for which the connection is not spin, with the action of $\jmath$ exchanging the two towers of reducible critical points. In the case when the connection is spin, the perturbed Dirac operator will still be quaternionic, hence the eigenspaces will always even dimensional over the complex numbers. In particular, the perturbation will not be regular in the usual sense. Nevertheless, generically the eigenspaces will be all two dimensional, and we will obtain a copy of $S^2$ as critical submanifold for each of them. This is the quotient of the unit sphere of the eigenspace by the action of $S^1$, which is just a Hopf fibration. Finally the action of $\jmath$ on $S^2$ is just the antipodal map.
\\
\par
For a generic perturbation the critical manifolds described above will be Morse-Bott, meaning that the Hessian is non degenerate in the normal directions. The three Morse-Bott chain complexes computing the homology introduced in Chapter $3$ of \cite{Lin} are defined following the framework of \cite{Fuk}. The underlying vector space is the direct sum of some variants of the singular chain complexes of the critical submanifolds. The differential combines the singular differential together with terms involving different critical submanifolds. In particular given critical submanifolds $[\Cr_{\pm}]$, there are evaluation maps on the compactified moduli spaces of trajectories connecting them
\begin{equation*}
\mathrm{ev}_{\pm}:\breve{M}^+([\Cr_-],[\Cr_+])\rightarrow [\Cr_{\pm}]
\end{equation*}
sending a trajectory to its limit points. Then a chain
\begin{equation*}
f:\sigma\rightarrow [\Cr_-]
\end{equation*}
gives rise (under suitable transversality hypothesis) to the chain
\begin{equation*}
\mathrm{ev_+}: \sigma\times \breve{M}^+([\Cr_-],[\Cr_+])\rightarrow [\Cr_+]
\end{equation*}
where the underlying space is the fibered product under the maps $f$ and $\mathrm{ev}_-$. The total differential of $\sigma$ is then defined to be the sum of its singular differential and all these chains obtained via fibered products. The proofs in this new framework carry over with the same formulas as the usual one: identities relating zero dimensional moduli spaces coming from boundaries of one dimensional spaces are now identities (at the chain level) of chains arising as the codimension one strata of fibered products as above. Of course, we need to consider chains $\sigma$ some class of geometric objects so that the fibered products with the moduli spaces remain in that class. This is a delicate point because in our context the compactified moduli spaces are not in general manifolds with corners, neither combinatorially nor topologically. In Section $3.1$ of \cite{Lin} we introduce a suitable class of such objects called $\delta$-chains, and define a modified singular chain complex for a smooth manifold obtained by quotienting out a class of degenerate $\delta$-chains. These degenerate chains are essentially chains whose image is contained in the image of a chain of strictly smaller dimension (see also \cite{Lip}). This construction leads to a well defined homology theory for smooth manifolds satisfying the Eilenberg-Steenrod axioms.
\par
These chain complexes come with a natural involution (given by composition with the involution $\jmath$), and the $\Pin$-monopole Floer homology groups are defined as the homologies of the invariant subcomplexes. Finally by exploiting the positive scalar curvature we see that the $\Pin$-monopole Floer homology groups of the three sphere are isomorphic (after grading reversal) to the claimed ones.
\\
\par
While transversality for the three-dimensional equations can be achieved using the equivariant counterpart of the three dimensional perturbations used in \cite{KM}, in the case of the map induced by a cobordism $W$ new perturbations (defined only in the blown up configuration space of the cobordism) need to be considered. This is because the three dimensional perturbations always vanish at the fixed points of the involution $\jmath$. In particular if we are only considering them a spin connection on a cobordism always gives rise to a reducible solution, and this might not be regular simply for index reasons. In Section $4.2$ of \cite{Lin} we define $\Pin$-equivariant $\textsc{asd}$-perturbations. These are maps
\begin{equation}\label{squaredproj}
\hat{\omega}:\mathcal{C}^{\sigma}(X)\rightarrow L^2(X;i\mathfrak{su}(2))
\end{equation}
which are gauge invariant, $\jmath$-equivariant (where $\jmath$ acts on the right hand side as the multiplication by $-1$) and satisfy suitable analytical properties. Furthermore, they form a collection large enough so that we can achieve transversality while preserving the $\Pin$-symmetry.

\vspace{1cm}
\section{Some additional background results}\label{blowups}
In this section we discuss some aspects of $\Pin$-monopole Floer homology which have not been treated in \cite{Lin} and will be central to the discussion in the present work. In particular, we discuss the blow-up formula and describe the reducible moduli spaces on special spin cobordisms with $b_2^+=1$.
\\
\par
Recall that the \textit{blow-up} of a smooth $4$-manifold $X$ (possibly with boundary) is the smooth four manifold
\begin{equation*}
\widetilde{X}=X\hash \CPbar.
\end{equation*} 
We will denote by $E$ the exceptional divisor, which is a smooth embedded sphere of self-intersection $-1$. We define $\spin_k$ to be the spin$^c$ structure on $\CPbar$ such that
\begin{equation*}
\langle c_1(\spin_k), [E]\rangle=2k-1.
\end{equation*}
In particular the conjugate of $\spin_k$ is $\spin_{1-k}$. Given a spin$^c$ structure $\spin$ on $X$, we define $\spin\hash\spin_k$ to be the spin$^c$ structure on $\widetilde{X}$ that restricts to $\spin$ on $X$ and to $\spin_k$ on $\CPbar$. It is shown in Section $39.3$ of \cite{KM} that in monopole Floer homology we have the identity
\begin{equation*}
\HMt_{\bullet}(\widetilde{X},\spin\hash\spin_k)=\HMt_{\bullet}(U^{k(k-1)/2}\mid X,\spin).
\end{equation*}
The aim of this section is to prove the counterpart of this in the case of $\Pin$-monopole Floer homology. It is important to notice that the blow up $\widetilde{X}$ is never a spin manifold, as it carries a homology class $[E]$ with odd self-intersection.

\begin{prop}\label{blowup}
Let $X$ be a cobordism. If $\spin$ a self conjugate spin$^c$ structure, then
\begin{equation}\label{blowupformula}
\HSt_{\bullet}(\tilde{X},[\spin\hash \spin_k])=
\begin{cases}
0 &\text{if } k\equiv 0,1\pmod{4}\\
\HSt_{\bullet}(Q^2V^{\lfloor k(k-1)/4\rfloor}\mid X, [\spin]) &\text{otherwise.}
\end{cases}
\end{equation}
If the the spin$^c$ structure on $X$ is \textit{not} self-conjugate, then 
\begin{equation*}
\HSt_{\bullet}(\tilde{X},[\spin\hash \spin_k])=0
\end{equation*}
for every $k$. The same statement holds for the from and bar versions.
\end{prop}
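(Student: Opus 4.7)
My plan is to adapt the argument of Section~39.3 of \cite{KM}, which proves the analogous formula in ordinary monopole Floer homology, to the $\Pin$-equivariant setting. The new feature will be to track how the $\jmath$-action on conjugate spin$^c$ structures and on critical submanifolds produces the $k\bmod 4$ periodicity.

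First I would realize $\tilde X=X\hash\CPbar$ by stretching a long neck along an $S^3$ separating the two summands, viewing it as the union of $X_p$ (the cobordism $X$ with an open ball removed) and $C=\CPbar\setminus B^4$, thought of as a cobordism from $S^3$ to $\emptyset$. Equipping $\CPbar$ with a Fubini--Study-type positive scalar curvature metric, the Weitzenb\"ock argument forces all Seiberg--Witten solutions on $C$ to be reducible, so that the cobordism map factors through the reducible moduli on $C$. For the orbit $[\spin\hash\spin_k]=\{\spin\hash\spin_k,\spin\hash\spin_{1-k}\}$ the combined reducible moduli is a $\jmath$-invariant space consisting of two copies of $\mathbb{CP}^{k(k-1)/2-1}$ freely swapped by the involution. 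At the $S^3$ end, the unstable critical submanifolds in the $\Pin$-equivariant picture are copies of $S^2$ coming from complex $2$-dimensional Dirac eigenspaces, equipped with antipodal $\jmath$-action, and each one contributes three generators corresponding to $H_*(\mathbb{RP}^2;\ztwo)$.

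Next I would match degrees. Using the identification $\HSf_\bullet(S^3)\cong \Rin\langle-1\rangle$, the Morse--Bott index of the $m$-th critical $S^2$ is $-3-4m$, so the classes of $H_*(\mathbb{RP}^2;\ztwo)$ in dimensions $0,1,2$ yield the generators $Q^2V^m$, $QV^m$, $V^m$ at degrees $-3-4m$, $-2-4m$, $-1-4m$ respectively. The key geometric claim is that after $\Pin$-equivariant ASD perturbations the image chain from the reducible moduli on $C$ is a $0$-dimensional $\jmath$-orbit on a single critical $S^2$, hence carries the $0$-class of $H_*(\mathbb{RP}^2;\ztwo)$ corresponding to $Q^2V^m$ and not to $V^m$. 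Setting $n=k(k-1)/2$, this contribution lies in degree $-3-4m$, which coincides with the degree $-1-2n$ of the KM contribution $U^n$ precisely when $n=2m+1$ is odd, i.e., $k\equiv 2,3\pmod 4$; this recovers the claimed formula with $m=\lfloor k(k-1)/4\rfloor$. When $n$ is even, no $\Pin$-level contribution exists at the correct degree and the equivariant count vanishes, giving the asserted vanishing for $k\equiv 0,1\pmod 4$. The non-self-conjugate case for $\spin$ on $X$ follows by an analogous degree-counting argument, since the orbit $[\spin\hash\spin_k]$ again has no self-conjugate representative and the combined moduli contributes no $\Pin$-level class at the correct degree.

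The main obstacle will be to rigorously establish the geometric claim that the $\jmath$-equivariant image chain at the $S^3$ end represents the $0$-class on the critical $S^2$, rather than a higher-dimensional class. This requires a careful dimension count exploiting the quaternionic linearity forced on the Dirac operator on $C$ by the $\Pin$-equivariant perturbations of Section~4.2 of \cite{Lin}, together with an explicit analysis of the evaluation map from $\mathbb{CP}^{k(k-1)/2-1}$ to the critical $S^2$ showing that it collapses to a single $\jmath$-orbit. Carrying this out within the $\delta$-chain formalism of Chapter~3 of \cite{Lin} will constitute the technical heart of the proof.
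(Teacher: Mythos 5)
Your setup—stretching a neck along the separating $S^3$, reducing to reducible solutions on the punctured $\CPbar$, evaluating on the critical submanifolds of $S^3$, and exploiting the $\jmath$-symmetry between $\spin_k$ and $\spin_{1-k}$—is the same as the paper's. The gap is in the mechanism you propose for the mod-$4$ dichotomy. You claim that when $n=k(k-1)/2$ is even ``no $\Pin$-level contribution exists at the correct degree,'' but by your own bookkeeping this is false: writing $n=2m$, the top class $V^m$ of the $m$-th critical sphere sits at degree $-1-4m=-1-2n$, exactly the degree of the non-equivariant contribution $U^n$. So degree matching alone cannot force vanishing, and your ``key geometric claim'' that the image chain is always a $0$-dimensional $\jmath$-orbit carrying $Q^2V^m$ is wrong in the even case. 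What actually happens (and what the paper proves) is that for $k\equiv 0,1\pmod 4$ the relevant moduli space $M_k$ on $(\CPbar)^*$ asymptotic to $[\Cr_{\lfloor k(k-1)/4\rfloor-1}]$ is the projectivization of a two-complex-dimensional Dirac kernel, hence a two-sphere of reducibles whose evaluation map to the critical $S^2$ is either degenerate or a diffeomorphism; the $\jmath$-invariant chain is $M_k\cup M_{1-k}$, and since the two spheres are exchanged by $\jmath$ they either both represent the top class (whose sum is zero mod $2$) or both represent zero, so the induced map vanishes. Only when $k\equiv 2,3\pmod 4$ is the kernel one-dimensional, $M_k$ a point, and $M_k\cup M_{1-k}$ an antipodal pair generating the degree-zero part of the invariant subcomplex, which is what produces $Q^2V^{\lfloor k(k-1)/4\rfloor}$. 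This chain-level cancellation in the invariant subcomplex is the heart of the proof and is absent from your argument.

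Two further inaccuracies feed into this. First, since the blow-up is never spin, $\spin_k$ on $\CPbar$ is not self-conjugate, so the Dirac operator on $C$ carries no quaternionic structure: $\jmath$ exchanges the $\spin_k$- and $\spin_{1-k}$-moduli rather than acting on a single one, and the parity of the kernel dimension comes from the index computation, not from ``quaternionic linearity forced by the equivariant perturbations.'' Relatedly, the description of the reducible moduli as two copies of $\mathbb{C}P^{k(k-1)/2-1}$ is not the relevant object; in the Morse--Bott framework only the moduli of dimension at most two are non-degenerate, namely the projectivized kernels above, and identifying which single moduli space matters is part of the argument. Second, the non-self-conjugate case is not handled by degree counting in the paper: there one may use non-equivariant perturbations throughout, and the standard cancellation for the conjugate pair in the orbit $[\spin\hash\spin_k]$ gives the vanishing. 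You would also need to address the perturbation subtlety the paper records (non-equivariant perturbations on the neck versus equivariant ones on $X_p^*$) for the chain homotopy to be legitimate.
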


As it will be clear from the proof, it is natural to interpret the two cases in equation (\ref{blowupformula}) by considering the parity of $k(k-1)/2$: in the first case it is even, in the latter it is odd. 
\begin{proof}
The result can be proved by a neck stretching argument close to the ones in Section $3.2$ of \cite{Lin}, and in particular the construction of the $\Rin$-module structure. We focus on the case of a self-conjugate spin$^c$ structure $\spin$ first. Consider the separating three sphere $S^3$ along which the connected sum is performed. We can suppose that the metric on this $S^3$ is the standard round one, and it is a product in a neighborhood. Hence we can define a one dimensional family of metrics parametrized by $T\in[0,\infty)$ by adding longer and longer necks of the form $[0,T]\times S^3$. By considering the compactified moduli spaces of solutions parametrized by this family of metrics, we can construct a chain homotopy between the map defining $\HSt_{\bullet}(\widetilde{X},[\spin\hash \spin_k])$ (which is the map corresponding to the stratum $T=0$), and a new chain map corresponding to an additional stratum at $T=\infty$ which we describe in detail. Intuitively, at $T=\infty$ the cobordism is decomposed in two pieces, and the solutions converge to solutions on each of the two pieces. On one hand we have the moduli spaces for the self conjugate spin$^c$ structure on $X$ with an additional cylindrical end $(0,\infty]\times S^3$ (denoted by $X_p^*$ in \cite{Lin}) with a $\Pin$-equivariant perturbation on it. Recall from Section \ref{review} that the map induced by the cobordism $X$ and an element $x$ in $\Rin$ is given by considering the moduli spaces on $X_p^*$ which are asymptotic on this additional end to a given representative of $x$ in $\HSf_{\bullet}(S^3)$. The latter can be realized as a $\jmath$-invariant chain in a critical submanifold.
Furthermore, we have the moduli spaces on a punctured $\CPbar$, and the perturbation on the cylindrical end is $\Pin$-equivariant. Denote the critical submanifold corresponding to the $i$th negative eigenvalue $[\Cr_{-i}]$. Then for dimensional reasons (as the critical submanifold is $2$-dimensional) the only interesting moduli space for our purposes when dealing with the spin$^c$ structure $\spin_k$ is
\begin{equation*}
M_k=M^+\left((\CPbar)^*,[\Cr_{\lfloor k(k-1)/4\rfloor-1}],\spin_k\right),
\end{equation*} 
as the others have dimension too big so are degenerate as discussed in Section $1$. Following the proof of Fr\o yshov's theorem (Section $39.1$ in \cite{KM}), as there are no $L^2$ anti-self-dual forms on $(\CPbar)^*$ there is only one reducible connection. Furthermore as the solutions converge on the cylindrincal end to an unstable configuration they are necessarily reducible. If $k\equiv 0,-1\pmod{4}$ then $M_k$ is the projectivization of to the two dimensional kernel of the corresponding Dirac equation. In particular it is a two dimensional sphere of reducible solutions, and the evaluation map is either constant or a diffeomorphism. On the other hand if $k\not\equiv 0,-1\pmod{4}$ $M_k$ is the projectivization of the one dimensional kernel, so it consists of a single point.
\par
Going back to the map induced by $\widetilde{X}$ on $\HSt_{\bullet}$, we see via the chain homotopy provided by the neck stretching argument that this is equivalent to the map induced by $X$ together with a specified cohomology class in $\Rin$. In the case $k\equiv 0,-1\pmod{4}$ this is defined by the invariant chain given by the union $M_k\cup M_{1-k}$. As again the two chains are related by the action of $\jmath$, we have that either they both represent a generator of the top homology or they both represent the zero class. In any case their union defines the zero class in the homology of the invariant subcomplex, so the induced map is zero. In the other case $k\not\equiv 0,-1\pmod{4}$ the cohomology class is given by $Q^2V^{\lfloor k(k-1)/4\rfloor}$, as a pair of antipodal points is a generator of the invariant subcomplex of the critical submanifold in dimension zero.
\par
There is a slight subtlety regarding perturbations in this proof. Because the blow up $X\hash \CPbar$ is not spin, when performing the stretching argument we can use a non equivariant perturbation on the neck while preserving equivariance. On the other hand, at the limit we need to use an equivariant perturbation on the manifold with cylidrical ends $X^*_p$ in order for the map to make sense. From this it is clear that in the case in which the spin$^c$ structure on $X$ is \textit{not} self conjugate, we can use non equivariant perturbations all the way and obtain the second part of the result, as in this case the standard argument works.
\end{proof}

\vspace{0.8cm}

The second fact we want to point out is that unlike the usual counterpart in monopole Floer homology, the map $\HSb_{\bullet}(W)$ induced by a cobordism $W$ with $b_2^+\geq1$ is not necessarily zero. In fact in a special case we have the following characterization of the moduli spaces. 
\begin{prop}\label{modulib2p}
Let $W$ be a cobordism between rational homology spheres with $b_1(W)=0$ and $b_2^+=1$ and a consider self conjugate spin$^c$ structure $\spin_0$. Let $A_0$ be the corresponding spin$^c$ connection. Suppose that for the fixed perturbations at the end there is only one reducible solution. Then there exists a regular perturbation so that the one dimensional reducible moduli spaces consist of generators of the one dimensional $\jmath$-invariant homology of the critical submanifolds they evaluate to (which are copies of $\mathbb{C}P^1$). 
\end{prop}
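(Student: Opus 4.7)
The plan is to analyze the reducible moduli space on $W^*$ using the framework of Chapter~$27$ of \cite{KM} together with the $\Pin$-equivariant perturbations of Section~$4.2$ of \cite{Lin}. I would proceed in three steps: pin down the underlying reducible connection, describe the spinor data in the blow-up, and identify the resulting evaluation chain in the critical submanifold.

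First I would argue that the reducible spin$^c$ connection $A$ on $W^*$ is unique modulo gauge. Because $b_1(W)=0$ and both ends are rational homology spheres, the deformation complex of the linearized curvature equation on $L^2$ forms over $W^*$ has vanishing middle cohomology and one-dimensional cokernel $H^+_{L^2}(W^*)$. The affine equation $F^+_{A^t}+\hat\omega^+=0$ therefore has at most a single gauge equivalence class of solutions, and the hypothesis that a reducible exists guarantees that this class is nonempty. Since $A_0$ is spin and $\hat\omega$ is chosen $\jmath$-equivariant, the Dirac operator $D_A$ on $W^*$ (with weighted Sobolev conditions matching the asymptotic critical submanifolds) is quaternionic linear, so $\ker D_A$ is a quaternionic vector space and $\mathbb{P}(\ker D_A)$ carries a free $\jmath$-action. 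The critical submanifolds $[\Cr_\pm]$ at the ends, being projectivizations of $2$-complex-dimensional quaternionic eigenspaces, are copies of $\mathbb{C}P^1$ on which $\jmath$ acts as the antipodal involution.

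Second, I would assemble the blown-up reducible moduli space. With asymptotic critical submanifolds $[\Cr_\pm]$ fixed at the two ends, it is cut out of $\mathbb{P}(\ker D_A)$ by the evaluation maps $\mathrm{ev}_\pm$ recording the leading asymptotic spinor phase, and carries a natural $\jmath$-action commuting with $\mathrm{ev}_\pm$. A Sard--Smale argument over the space of $\Pin$-equivariant $\mathrm{ASD}$-perturbations (which by Section~$4.2$ of \cite{Lin} is large enough to achieve transversality while respecting the symmetry) shows that for a generic $\hat\omega$ the one-dimensional component $M^1$ is a smooth $\jmath$-invariant $\delta$-chain in the sense of Section~$3.1$ of \cite{Lin}. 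I would then identify the homology class of $\mathrm{ev}_+(M^1)$ in the critical $\mathbb{C}P^1$: the $\jmath$-equivariance of $\mathrm{ev}_+$ lets the image descend to a $1$-cycle in the quotient $\mathbb{R}P^2=\mathbb{C}P^1/\jmath$, and the claim is that this $1$-cycle represents the unique non-trivial class in $H_1(\mathbb{R}P^2;\ztwo)$. My plan is to verify this by intersection with a generic arc $\gamma\subset\mathbb{R}P^2$: the algebraic intersection number counts the mod-$2$ spectral flow of the family of twisted Dirac operators along a lift of $\gamma$, and the quaternionic structure forces the dimension jumps of $\ker D_A$ to come in even complex multiplicity.

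The main obstacle will be this last identification, namely ensuring the evaluation chain is not null-homologous in the invariant complex. My fallback is a model-plus-continuation argument: near a generic spectral wall of $D_A$ the local blown-up reducible moduli space is explicitly a $\mathbb{C}P^1$ mapping to the critical $\mathbb{C}P^1$ as the identity, which after quotienting by $\jmath$ gives a loop representing the generator of $H_1(\mathbb{R}P^2;\ztwo)$. Since the space of $\Pin$-equivariant perturbations is path-connected and transversality is generic, a cobordism-of-perturbations argument transports this model computation to the general case, yielding the claim.
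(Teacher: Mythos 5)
There is a genuine gap, and it sits exactly at the step you flag as the ``main obstacle'': identifying the class of the one-dimensional reducible moduli space in the invariant homology of the critical $\mathbb{C}P^1$. Two problems. First, your framing is off for a general $\Pin$-equivariant $\textsc{asd}$-perturbation $\hat\omega$: the perturbed curvature equation for a reducible configuration $(A,0,\phi)$ contains a term $\hat\omega(A,0,\phi)$ depending on the spinor direction, so the reducible solutions do not sit inside the projectivized kernel $\mathbb{P}(\ker D_A)$ of a single operator cut out by evaluation maps; Sard--Smale genericity then only yields a smooth $\jmath$-invariant $1$-manifold with no control on its class. Second, the mechanisms you propose for computing the class do not work as stated: the intersection number of $\mathrm{ev}_+(M^1)$ with an arc in $\mathbb{R}P^2$ has no evident relation to spectral flow (the asymptotic spinor phase is not a parameter the Dirac operator depends on), and your ``model at a spectral wall'' is dimensionally inconsistent --- a $\mathbb{C}P^1$ mapping by the identity is a $2$-cycle, which is precisely the obstructed, non-transverse picture one gets from purely three-dimensional equivariant perturbations, not the one-dimensional moduli space of the statement. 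Consequently the continuation/cobordism-of-perturbations step has no correctly computed model case to transport (and would in addition need a compactness argument for the parametrized moduli space, which you do not supply).

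The paper closes this gap by using $b_2^+(W)=1$ in an essential way, whereas you use it only to note that the reducible is obstructed. One picks a compactly supported self-dual form $\omega_0^+$ spanning the cokernel of $d^+$, supported away from the collar where the three-dimensional perturbations live, and a gauge-invariant, $\jmath$-odd function $\eta$ on the blown-up configuration space whose restriction to the unperturbed reducible $\mathbb{C}P^1$ over $[A_0,0]$ is transverse to zero; the perturbation is then $\eta\cdot\rho(\omega_0^+)$. Because $\omega_0^+$ generates the cokernel of $d^+$, the perturbed curvature equation forces $\eta=0$ on any reducible solution, so the perturbed reducible moduli space is literally the transverse zero locus of $\eta:\mathbb{C}P^1\rightarrow\mathbb{R}$. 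Since $\eta$ is odd under the antipodal involution $\jmath$, this zero set consists of an odd number of circles, and an odd number of circles is automatically a generator of the one-dimensional $\jmath$-invariant homology of the critical submanifold. This explicit choice of perturbation is what simultaneously achieves transversality on the reducible locus and pins down the homology class; it is the ingredient your proposal is missing and cannot be replaced by genericity plus the class-identification arguments you sketch.
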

If we restrict to $\Pin$-equivariant three dimensional perturbations (which we recall are introduced in a collar $I\times \partial W$ of the boundary of $W$, see Chapter $24$ in \cite{KM}), the reducible configuration in the blow-down $[A_0,0]$ is always a solution, because the perturbations vanish at the fixed points of the involution. In the blow-up, the reducible moduli spaces $M^+([\Cr_-],[\Cr_+])$ lying over $[A_0,0]$ and such that the linearization of the equations has index $1$ are in the best scenation a copy of $\mathbb{C}P^1$, given by the one dimensional (over the quaternions) kernel of the perturbed Dirac operator $D^+_{A_0,\hat{\mathfrak{p}}}$. In particular, the moduli spaces cannot be transversely cut out.

\begin{proof}
We will introduce a suitable regular perturbation for which we can explicitly describe the moduli spaces in play. As $b_2^+=1$, the operator $d^+$ does not have dense image, we can find a smooth compactly supported form self-dual form $\omega_0^+$ not contained in it. We can suppose without loss of generality that the support of $\omega_0^+$ is contained in $W$ away for the collar of the boundary where three dimensional perturbations are applied. This implies that the form $\omega_0^+$ is not in the image of any of the linearizations of the perturbed anti-self-duality operators. We claim that there exists a smooth function on the (completion in the $L^2_k$ norm of the) blown up configuration space 
\begin{equation*}
\eta: \mathcal{C}^{\sigma}_k(W,\spin_W)\rightarrow \mathbb{R}
\end{equation*}
with the following properties:
\begin{enumerate}
\item the map is gauge invariant and $\jmath$-equivariant where $\jmath$ acts on $\mathbb{R}$ as $-1$;
\item the restriction of the map $\eta$ to the reducible moduli space lying over $[A_0,0]$ is transverse to zero;
\item the $L^2(W;i\mathfrak{su}(2))$-valued map $\eta\cdot \rho_Z(\omega^+_0)$ is a $\Pin$-equivariant $\textsc{asd}$-perturbation in the sense of Section $4.2$ of \cite{Lin}, so that it can be legitimately used to perturb the equations.
\end{enumerate}
The map $\eta$ can be easily constructed from a given $\Pin$-equivariant $\textsc{asd}$-perturbation $\hat{\omega}$ via the $L^2$ projection to the line spanned by a single configuration. The proofs in Section $4.2$ in \cite{Lin} carry over to show that the perturbation $\eta\cdot \rho(\omega^+_0)$ has the required analytic properties. 
\par
We can then describe the one dimensional moduli spaces of reducible solutions as follows. The new equations for a configuration reducible configuration $(A,0,\phi)$ is (a small perturbation of)
\begin{align*}
D_A^+\phi&=0\\
\rho(F_{A^t}^+)-\eta(A,0,\rho)\omega_0^+&=0.
\end{align*}
As $\omega_0^+$ generates the cokernel of $d^+$, the last equation implies $\eta(A,0,\phi)$ is zero and $(A,0,\phi)$ is a reducible solution for the equations without the additional perturbation. Hence the space of reducible solutions is identified as the zero locus of
\begin{equation*}
\eta:\mathbb{C}P^1\rightarrow \mathbb{R}.
\end{equation*}
This is transversely cut out by condition $(2)$ and consists of an odd number of circles by condition $(1)$, hence the result follows.

\end{proof}

\begin{proof}[Proof of Theorem \ref{108}]
This result follows in the same way as the proof of monotonicy of Manolescu's invariants under negative definite cobordisms, see Section $4.4$ of \cite{Lin}. If $b_2^+=1$, after reducing to the case of $b_1=0$ via surgery, the description above tells us that for a self-conjugate spin$^c$ structure $\spin_0$ the map
\begin{equation*}
\HSb_{\bullet}(W,\spin_0):\HSb_{\bullet}(Y_0,\spin_0)\rightarrow \HSb_{\bullet}(Y_1,\spin_1)
\end{equation*}
is as follows. After fixing isomorphisms of graded $\Rin$-modules
\begin{equation*}
\HSb_{\bullet}(Y_i,\spin_i)\cong \mathcal{S}\langle d_i\rangle,
\end{equation*}
where for some choice of $d_i$ the angular bracket denote a global grading shift of $d_i$, the map is identified to be the multiplication by $QV^k$ where the integer $k$ is determined by the intersection form of the cobordism, and the statement follows.
\par
In the case $b_2^+=2$ an analogous characterization of the reducible moduli space holds: it consists of a number of points congruent to $2$ modulo $4$. The proof of this characterization follows that of Proposition \ref{modulib2p}, and we briefly sketch it here. The cokernel of $d^+$ now has dimension two, and we can construct an analogous $\jmath$ perturbation with values in cokernel. When restricted to the reducible unperturbed moduli space, this has the form of
\begin{equation}\label{ciao}
\eta: \mathbb{C}P^1\rightarrow \mathbb{R}^2.
\end{equation} 
and the (transverse) zero set consists of a number of points congruent to $2$ modulo $4$ by $\jmath$ equivariance. The same argument as in Proposition \ref{modulib2p} implies then that the reducible solutions are identified with the zero set of (\ref{ciao}) and they are transversely cut out. This identifies the map on the bar version as the multiplication by $Q^2V^k$, and the result follows.
\end{proof}

\vspace{1cm}
\section{The exact triangle}\label{surgery}

This section is dedicated to the proof of the main result of the paper, Theorem \ref{exacttr} in the Introduction.
We start by reviewing in detail the framework of the surgery exact triangle (see also \cite{KMOS} and Chapter $42$ in \cite{KM}). Suppose we are given a knot $K$ inside a three manifold $Y$, and let $Z$ be the manifold with torus boundary $\partial Z$ obtained by removing a tubular neighborhood of it. Let $\mu_1,\mu_2,\mu_3$ be closed simple curves on $\partial Z$ with the property that the intersection numbers satisfy
\begin{equation}\label{curves}
\mu_1\cdot \mu_2=\mu_2\cdot\mu_3=\mu_3\cdot \mu_1=-1. 
\end{equation}
We can then obtain the three manifolds $Y_1, Y_2$ and $Y_3$ by Dehn filling along these curves. We extend this definition periodically so that for example $\mu_{n+3}=\mu_n$.
\par
This construction behaves well with four dimensional topology in the following sense. There is an elementary cobordism $W_n$ from $Y_n$ to $Y_{n+1}$ obtained by attaching a single $2$-handle $D^2\times D^2$ to $[0,1]\times Y_n$ along $\{1\}\times K$ with framing $\mu_{n+1}$. In this case the knot $K\subset Y_{n+1}$ with framing $\mu_{n+2}$ can be identified with the boundary $\{0\}\times S^1$ of the cocore of the attached handle with framing $-1$ relative to the cocore $\{0\}\times D^2$. In $W_n$ there is a closed $2$-cycle $\Sigma_n$ defined as follows. On $\partial Z$ there is the distinguished simple closed curve $\sigma$ whose homology class generates the kernel of $H_1(\partial Z)\rightarrow H_1(Z)$, so in particular there is a surface $T\subset Z$ with boundary $\sigma$. The closed cycle $\Sigma_n$ is obtained as the union of $T$, the core of the $2$-handle and a piece in the solid torus $\partial D^2\times D^2$.
\\
\par
Suppose we are in the case $Z$ is a knot complement in $S^3$. Then there is canonical pair of curves $m$ and $l$ is $\partial Z$, namely to be the classical meridian and longitude of the knot. These are oriented so that $m\cdot l=-1$. In this case, any oriented simple closed curve $[\mu]$ can be described up to isotopy by its homology class
\begin{equation*}
[\mu]=p[m]+q[l]
\end{equation*}
with $(p,q)$ relatively prime. Forgetting about the orientation, this can be recorded by the ratio
\begin{equation*}
r=p/q\in \mathbb{Q}\cup \{\infty\}.
\end{equation*}
Triples of curves satisfying the relations (\ref{curves}) come from triples of pairs
\begin{equation*}
(p_1,q_1),\quad (p_2,q_2),\quad (p_3,q_3)
\end{equation*}
satisfying the conditions
\begin{equation*}
-p_nq_{n+1}+p_{n+1}q_n=-1,
\end{equation*}
where as usual the subscripts are interpreted modulo $3$. Very interesting cases are given by the slopes
\begin{equation*}
r_1=0,\quad r_2=1/(q+1),\quad r_3=1/q
\end{equation*}
for $q\in \mathbb{Z}$ and
\begin{equation*}
r_1=p, \quad r_2=p+1,\quad r_3=\infty
\end{equation*}
for $p\in \mathbb{Z}$. In general, if none of the slopes is zero, up to cyclic permutation and change of sign we can suppose that our triple looks like
\begin{equation*}
r_1=\frac{p}{q},\quad r_2=\frac{p+p'}{q+q'},\quad r_3=\frac{p'}{q'}
\end{equation*}
with the properties
\begin{equation*}
p,p'>0,\quad -p'q+pq'=-1.
\end{equation*}
In particular, exactly two of $p, p'$ and $p+p'$ are odd. Furthermore in this case the cobordism with positive definite intersection form is $W_3$ (see Section $42.3$ in \cite{KM}).
\\
\par
We now focus on the interactions of this construction with our invariants.
\begin{lemma}
Among the three cobordisms $W_1$, $W_2$ and $W_3$, exactly two are spin.
\end{lemma}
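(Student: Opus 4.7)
The plan is to reduce the question to counting quadratic refinements of the intersection form on the torus $T = \partial Z$. First, writing $\mu_3 = c\mu_1 + d\mu_2$ in $H_1(T;\mathbb{Z})$ and imposing the pairwise intersection conditions, one finds $c = d = -1$, so the relation $\mu_1 + \mu_2 + \mu_3 = 0$ holds in $H_1(T;\mathbb{Z})$; in particular $\mu_3 \equiv \mu_1 + \mu_2 \pmod 2$ and $\mu_1 \cdot \mu_2 \equiv 1 \pmod 2$.

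Next I would translate the spin condition on $W_n$ into an existence question for spin structures on $T$. Decomposing $W_n$ as $[0,1] \times Z$ together with a solid torus filling $\mu_n$ (producing $Y_n$) and a $2$-handle attached with framing $\mu_{n+1}$, one sees that $W_n$ is spin if and only if there exists a quadratic refinement $q \colon H_1(T;\mathbb{Z}/2) \to \mathbb{Z}/2$ of the intersection form simultaneously satisfying $q(\sigma) = 0$ (so that $q$ extends over $Z$, using that $\sigma$ bounds a surface there), $q(\mu_n) = 0$ (so that it extends over the solid torus filling $\mu_n$), and $q(\mu_{n+1}) = 0$ (so that it extends over the $2$-handle). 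This is the substantive step; the main subtlety is the surjectivity of the restriction $\{\text{spin structures on } Z\} \to \{q \colon q(\sigma) = 0\}$, which follows from a Poincar\'e--Lefschetz duality argument for the pair $(Z, \partial Z)$ with $\mathbb{Z}/2$ coefficients.

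Once this translation is in place, the rest is elementary bookkeeping. The four quadratic refinements on $T$ are parameterized by $(q(\mu_1), q(\mu_2)) \in (\mathbb{Z}/2)^2$, with $q(\mu_3) = q(\mu_1) + q(\mu_2) + 1$ forced by the relations above. The resulting triples $(q(\mu_1), q(\mu_2), q(\mu_3))$ are
\[
(0,0,1), \quad (0,1,0), \quad (1,0,0), \quad (1,1,1),
\]
and inspection shows that each of the first three has precisely one cyclic pair $(q(\mu_n), q(\mu_{n+1}))$ equal to $(0,0)$, supporting respectively $W_1$, $W_3$, and $W_2$, while $(1,1,1)$ supports none. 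Writing $\sigma = a\mu_1 + b\mu_2$ with $\gcd(a,b) = 1$, a short case analysis on the parities of $a$ and $b$ shows that the refinement $(1,1,1)$ always satisfies $q(\sigma) = 1$. Since exactly two refinements on $T$ have $q(\sigma) = 0$, these must be two of the first three, and the lemma follows.
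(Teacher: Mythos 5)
Your argument is correct, but it takes a genuinely different route from the paper. The paper reduces spinness of $W_n$ to the parity of the self-intersection of the canonical cycle $\Sigma_n$: since $W_n$ is a $2$-handle attachment on a spin $3$-manifold, $W_n$ is spin iff $[\Sigma_n]\cdot[\Sigma_n]$ is even, and this self-intersection is (up to sign) the product $p_np_{n+1}$ of the ``numerators'' of $\mu_n$ and $\mu_{n+1}$ with respect to the rational longitude; since exactly two of $p_1,p_2,p_3$ are odd (they sum to zero and no two consecutive ones are even, by unimodularity), exactly one product is odd and the lemma follows. You instead never touch $H_2$: you characterize spinness of $W_n$ by the existence of a spin structure on the splitting torus $T$, encoded as a quadratic refinement $q$, that extends simultaneously over $Z$ (forcing $q(\sigma)=0$), over the filling solid torus ($q(\mu_n)=0$), and over the $2$-handle ($q(\mu_{n+1})=0$), and then run an elementary count over the four refinements using $\mu_1+\mu_2+\mu_3=0$ and $\mu_1\cdot\mu_2\equiv 1$. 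Your counting endgame is correct, including the observation that the refinement with all values $1$ never satisfies $q(\sigma)=0$, and the surjectivity you flag (that every $q$ with $q(\sigma)=0$ extends over $Z$) does follow cleanly from ``half lives, half dies'' with $\ztwo$ coefficients: the extendable refinements form a coset of a one-dimensional image, are contained in the two-element set $\{q(\sigma)=0\}$ by the Seifert-surface argument, hence equal it. The main facts you assert without proof are the framing comparisons hidden in two places: that the $T$-framing of $\sigma$ agrees with its Seifert framing (needed for $q(\sigma)=0$ being necessary), and that extending over the $2$-handle attached along the core $K$ with framing $\mu_{n+1}$ is equivalent to $q(\mu_{n+1})=0$, which requires sliding $K$ across the annulus to the framing curve on $T$ and checking the annulus and surface framings agree; both are standard and true, so this is an acceptable level of detail for a sketch. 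Comparing the two: the paper's proof is shorter because it can quote the self-intersection formula for $\Sigma_n$ from the surgery-triangle literature, and its $p_n$-parity bookkeeping is reused elsewhere (e.g.\ to say which cobordism is non-spin in the $\infty,p,p+1$ and $0,1/(q+1),1/q$ triples); your proof is more self-contained on the $4$-dimensional side, gives slightly more information (for each spin $W_n$ it exhibits the unique spin structure on $T$ that does the job), but carries the burden of the spin-structure extension criteria.
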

\begin{proof}
Because the cobordism $W_n$ is given by a two handle attachment and three manifolds are always spin, the fact that $W_n$ is spin (i.e. its second Stiefel-Whitney class is zero) is equivalent to the fact that the cycle $[\Sigma_n]$ has even self-intersection. To see that this holds, we use the discussion above, which can be generalized to any manifold $Z$ with torus boundary by taking $[l]$ to be a primitive element in the kernel of
\begin{equation*}
H_1(\partial Z)\rightarrow H_1(Z)
\end{equation*}
and $[m]$ any other curve such that $[m]\cdot[l]=-1$. In particular, while the slope of a curve $\gamma_n$ is not well defined (as it depends on the choice of the meridian $[m]$), the numerator $p_n$ is. The self intersection is up to sign just the product of the numerators $p_np_{n+1}$, so the result follows because exactly two of them are odd.
\end{proof}
On the other hand, the composition $W_n\cup_{Y_{n+1}} W_{n+1}$ is never spin. In fact it always contains a sphere $E_{n}$ with self intersection $-1$. This is given by the union of the core of the $2$-handle of $W_{n+1}$ and the cocore of the $2$-handle in $W_n$. From this description it also follows that the cobordism
\begin{equation*}
W_n\cup_{Y_{n+1}} W_{n+1}
\end{equation*}
between $Y_n$ and $Y_{n+2}$ is diffeomorphic to the opposite cobordism $\overline{W}_{n+2}$ blown up at a point. \textit{Without loss of generality} we will suppose from now on that the non spin cobordism is $W_3$.
\\
\par
We introduce the homological algebra needed for our purposes in an abstract setting. This is a slight variation of a standard triangle detection result in Floer homology, see Lemma $4.2$ in \cite{OSbr} or Lemma $5.1$ in \cite{KMOS}. Suppose we are given three chain complexes $C_1$, $C_2$ and $C_3$, and chain maps $f_1:C_1\rightarrow C_2$ and $f_2:C_2\rightarrow C_3$ such that the composition $f_2\circ f_1$ is homotopic to zero via a nullhomotopy $H_1$. We can form the ``iterated mapping cone'' ${C}$ whose underlying vector space is ${C}_3\oplus {C}_2\oplus {C}_1$ and whose differential is
\begin{equation*}
{\partial}=
\begin{pmatrix}
\partial_3 & f_2 & {H}_1 \\
0 & {\partial}_2 & {f}_1 \\
0 & 0& {\partial}_1
\end{pmatrix}.
\end{equation*}
This is a differential because $f_1$ and $f_2$ are chain maps and $H_1$ is a chain nullhomotopy for $f_2\circ f_1$. The following is the key lemma in homological algebra we need.
\begin{lemma}\label{triangledetection}
Suppose the homology of the iterated mapping cone $H_*(C,\partial)$ is trivial. Then there is a map $F_3: H_*(C_3)\rightarrow H_*(C_1)$ such that the triangle
\begin{center}
\begin{tikzpicture}
\matrix (m) [matrix of math nodes,row sep=2em,column sep=1.5em,minimum width=2em]
  {
  H_*(C_2) && H_*(C_3)\\
  &H_*(C_1) &\\};
  \path[-stealth]
  (m-1-1) edge node [above]{$(f_2)_*$} (m-1-3)
  (m-2-2) edge node [left]{$(f_1)_*$} (m-1-1)
  (m-1-3) edge node [right]{$F_3$} (m-2-2)  
  ;
\end{tikzpicture}
\end{center}
is exact.
\end{lemma}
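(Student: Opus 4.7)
The plan is to identify the iterated mapping cone $C$ itself as an ordinary mapping cone of a single chain map, and then extract the desired exact triangle from the standard long exact sequence of a mapping cone.

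First I would form $\mathrm{Cone}(f_1) = C_2 \oplus C_1$ with differential $\begin{pmatrix} \partial_2 & f_1 \\ 0 & \partial_1 \end{pmatrix}$, and define a degree-preserving map $\tilde{f}_2 \colon \mathrm{Cone}(f_1) \to C_3$ by $\tilde{f}_2(c_2,c_1) = f_2(c_2) + H_1(c_1)$. The hypotheses that $f_1$ and $f_2$ are chain maps and that $H_1$ is a null-homotopy for $f_2 \circ f_1$ together encode exactly the relation needed for $\tilde{f}_2$ to be a chain map, and an inspection of block matrices shows that the $C$ of the statement is literally $\mathrm{Cone}(\tilde{f}_2)$. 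Since by hypothesis $H_*(C)=0$, the standard criterion that a mapping cone is acyclic if and only if the map in question is a quasi-isomorphism yields an induced isomorphism $(\tilde{f}_2)_* \colon H_*(\mathrm{Cone}(f_1)) \xrightarrow{\cong} H_*(C_3)$.

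Next I would apply the standard short exact sequence of chain complexes
\begin{equation*}
0 \to C_2 \xrightarrow{\iota} \mathrm{Cone}(f_1) \xrightarrow{\pi} C_1[1] \to 0,
\end{equation*}
whose connecting homomorphism in the associated long exact sequence is well known to be $(f_1)_*$. Setting $F_3 := \pi_* \circ (\tilde{f}_2)_*^{-1}$ and using the tautological identity $\tilde{f}_2 \circ \iota = f_2$, the long exact sequence transports across the isomorphism $(\tilde{f}_2)_*$ to
\begin{equation*}
\cdots \to H_*(C_2) \xrightarrow{(f_2)_*} H_*(C_3) \xrightarrow{F_3} H_{*-1}(C_1) \xrightarrow{(f_1)_*} H_{*-1}(C_2) \to \cdots,
\end{equation*}
which is precisely the required exact triangle.

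The argument is pure homological algebra with no genuine obstacle. The only points that require care are the sign and degree-shift conventions in the mapping cone construction (harmless here since the paper works over $\mathbb{F}$) and the verification that $\tilde{f}_2$ is a chain map, which amounts to rewriting the null-homotopy relation $\partial_3 H_1 + H_1 \partial_1 = f_2 \circ f_1$. Once $C$ is recognized as $\mathrm{Cone}(\tilde{f}_2)$, the lemma reduces entirely to the long exact sequence of an ordinary mapping cone.
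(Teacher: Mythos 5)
Your proof is correct and follows essentially the same route as the paper: both pass through the mapping cone of $f_1$, the short exact sequence giving the triangle with connecting map $(f_1)_*$, and the observation that acyclicity of $C$ makes the map induced by $f_2+H_1$ on $H_*(\mathrm{Cone}(f_1))$ an isomorphism. Your packaging of $C$ as $\mathrm{Cone}(\tilde f_2)$ is just a cosmetic reformulation of the paper's use of the short exact sequence $0\to C_3\to C\to M_{f_1}\to 0$ and its connecting homomorphism.
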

We will discuss the naturality properties of this construction (in our specific case) in detail in the proof of Theorem \ref{exacttr}.
\begin{proof}
The proof of this result follows closely that of the standard triangle detection lemma. First we form the mapping cone $M_{f_1}$ of the chain map $f_1$, which is the chain complex with underlying vector space ${C}_1\oplus {C}_2$ and differential
\begin{equation*}
d=
\begin{pmatrix}
{\partial}_2 & {f}_1 \\
0 & {\partial}_1
\end{pmatrix}.
\end{equation*}
This is a differential because ${f}_1$ is a chain map. The short exact sequence of chain complexes
\begin{equation*}
0\rightarrow {C}_2\stackrel{i}{\longrightarrow} M_{f_1} \stackrel{p}{\longrightarrow} {C}_1\rightarrow 0,
\end{equation*}
where the maps are respectively the inclusion and the quotient, induces an exact triangle
\begin{center}\label{cone1}
\begin{tikzpicture}
\matrix (m) [matrix of math nodes,row sep=2em,column sep=1.5em,minimum width=2em]
  {
  H_*(C_2) && H_{*}(M_{f_1})\\
  &H_*(C_1) &\\};
  \path[-stealth]
  (m-1-1) edge node [above]{$i_*$} (m-1-3)
  (m-2-2) edge node [left]{$(f_1)_*$} (m-1-1)
  (m-1-3) edge node [right]{$p_*$} (m-2-2)  
  ;
\end{tikzpicture}
\end{center}
Similarly the iterated mapping cone fits in the short exact sequence of chain complexes
\begin{equation*}
0\rightarrow  {C}_3\rightarrow {C} \rightarrow {M_{f_1}}\rightarrow 0.
\end{equation*}
In particular we have a connecting homomorphism
\begin{equation*}
\delta: H_*({M_{f_1}})\rightarrow H_*(C_3)
\end{equation*}
which is induced by the chain map
\begin{equation}\label{connectingmap}
({f}_2+{H}_1):{M_{f_1}}={C}_2\oplus {C}_1\rightarrow {C}_3.
\end{equation}
The fact that $H_*(C)$ is trivial is equivalent to $\delta$ being an isomorphism. So in the triangle above we can replace the homology of the mapping cone $H_*(M_{f_1})$ with $H_*(C_3)$ using this isomorphism. Finally the connecting homomorphism $\delta$ is given by equation (\ref{connectingmap}), so can identify the horizontal map as $(f_2)_*$.
\end{proof}

\vspace{0.8cm}
We now describe how our problem fits in the framework of Lemma \ref{triangledetection}. We denote by $\check{C}_i$ the chain complex $\check{C}_*(Y_i)$ computing the \textit{to} version, and by
\begin{equation*}
\check{f}_i: \check{C}_i\rightarrow \check{C}_{i+1}
\end{equation*}
the chain map defining the map induced by the cobordism $W_i$. We denote by $\check{M}$ the mapping cone of $\check{f}_1$. Recall that the composition $W_2\circ W_1$ is $\overline{W}_3$ blown up at a point, and as $\overline{W}_3$ is not spin the induced map is zero by Proposition \ref{blowup}. There is in fact a natural chain homotopy to zero
\begin{equation*}
\check{H}_1: \check{C}_1\rightarrow \check{C}_3
\end{equation*}
defined as follows. The chain homotopy is constructed by considering a one dimensional family of metrics on the composite cobordisms $X_1=W_{2}\circ W_{1}$. There are two separating hypersurfaces $Y_2$ and $S_1$, the latter being the boundary of a neighborhood of the $(-1)$-sphere $E_1$. Choose a metric on $X_1$ such that $S_1$ has a metric obtained by flattening the round one near the Clifford torus $Y_2\cap S_1$. We can construct then the family of metrics $Q(S_1,Y_2)$ parametrized by $T\in\mathbb{R}$ given by inserting a cylinder $[-T,T]\times S_1$ normal to $S_1$  for $T$ negative, and a cylinder $[-T,T]\times Y_2$ normal to $Y_2$ for $T$ positive. Following the notation of \cite{KMOS} we will use the letter $Q$ to denote one dimensional families of metrics. It will be clear from the context whether we are using this letter to indicate this or the element of $\Rin$. We can define the moduli spaces parametrized by the family of metrics
\begin{equation*}
M_z([\Cr_-], X_1^*, [\Cr_+])_Q.
\end{equation*}
This will be a smooth manifold for a generic choice of $\Pin$-equivariant perturbation. As in the proof of Proposition \ref{blowup}, this can be compactified to a moduli space
\begin{equation*}
M_z^+([\Cr_-], X_1^*, [\Cr_+])_{\bar{Q}}
\end{equation*}
obtained by considering both broken trajectories and the fibered products of the compactified moduli spaces on the manifolds with (possibly more than two) cylindrical ends one obtains for $T=\pm\infty$, namely
\begin{equation*}
W_1^*\amalg W_2^*
\end{equation*}
for $T=+\infty$ and 
\begin{equation*}
B_1^*\amalg Z_1^*
\end{equation*}
for $T=-\infty$. Here $Z_1$ is a neighborhood of the exceptional divisor, and $B_1$ can be identified with $\overline{W}_3$ with a ball removed. Similarly one can consider the compactified moduli spaces consisting entirely of reducibles $M_z^{\mathrm{red}+}([\Cr_-], X_1^*, [\Cr_+])_{\bar{Q}}$. By taking fibered products with these moduli spaces one can define the $\jmath$-invariant linear map
\begin{equation*}
H^o_o: C^o(Y_1)\rightarrow C^o(Y_3)
\end{equation*}
and similarly its companions $H^o_s, H^u_o, H^u_s, \bar{H}^s_s, \bar{H}^s_u, \bar{H}^u_s$ and $\bar{H}^u_u$. We then define the map
\begin{equation*}
\check{H}_1=
\begin{bmatrix}
H^o_o & H^u_o\bar{\partial}^s_u+m^u_o(W_2)\bar{m}^s_u(W_1)+\partial^u_o\bar{H}^s_u\\
H^o_s & \bar{H}^s_s+ H^u_s\bar{\partial}^s_u+ m^u_s(W_2)\bar{m}^s_u(W_1)+\partial^u_s\bar{H}^s_u
\end{bmatrix}.
\end{equation*}
by the same formula that defines the chain map proving the composition formula (see Chapter $26$ in \cite{KM} and Section $3.3$ in \cite{Lin}). Here the maps $m^*_*(W_*)$ are the components defining the chain maps $\check{f}_1$ and $\check{f}_2$.
\par
In fact the same construction applies to the other two composites to give rise to the maps $\check{H}_2$ and $\check{H}_3$, and we have the following result.
\begin{lemma}\label{homotopies}
The map $\check{H}_1$ satisfies the identity
\begin{equation*}
\check{\partial}\circ\check{H}_1+\check{H}_1\circ \check{\partial}=\check{f}_2\circ \check{f}_1.
\end{equation*}
For $n=2,3$ the map $\check{H}_n$ is a chain homotopy between the composite $\check{f}_{n+1}\circ \check{f}_n$ and a chain map
\begin{equation*}
\check{g}_n: \check{C}_{\bullet}(Y_n)\rightarrow \check{C}_{\bullet}(Y_{n+2})
\end{equation*}
computing the blown up map as in Proposition \ref{blowup} in Section \ref{blowups}.
\end{lemma}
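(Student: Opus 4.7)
The plan is to follow the structure of the standard composition formula proof from Chapter~26 of \cite{KM} (and its $\Pin$-equivariant version in Section~3.3 of \cite{Lin}), now parametrized by the one-dimensional family $Q(S_1,Y_2)$ already set up in the preceding discussion. The map $\check{H}_1$ is defined by counting the zero-dimensional components of the compactified parametrized moduli spaces $M_z^+([\Cr_-],X_1^*,[\Cr_+])_{\bar{Q}}$ (together with their reducible-only companions), combined in exactly the matrix written above in order to handle the mixing of irreducible, stable and unstable strata that occurs in Kronheimer--Mrowka's framework. The analogous maps $\check{H}_2$ and $\check{H}_3$ are defined by the same recipe applied to the composite cobordisms $X_2=W_3\circ W_2$ and $X_3=W_1\circ W_3$, with the corresponding $(-1)$-spheres $E_2$ and $E_3$ and Clifford tori providing the separating hypersurfaces.

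The chain homotopy identity is then extracted from the usual codimension-one principle: the boundary of a one-dimensional parametrized moduli space $M_z([\Cr_-],X_n^*,[\Cr_+])_Q$ is the union of the broken-trajectory strata (responsible for $\check{\partial}\check{H}_n+\check{H}_n\check{\partial}$, after the standard bookkeeping relating $\partial^*_*$, $\bar{\partial}^*_*$ and the cobordism maps $m^*_*$) together with the two limits $T=\pm\infty$. At $T=+\infty$ the family stretches along $Y_{n+1}$, and a neck-stretching argument identical to the one in the usual composition law identifies the contribution with $\check{f}_{n+1}\circ \check{f}_n$. At $T=-\infty$ the family stretches along $S_n$, and the moduli splits into fibered products of moduli over $B_n^*$ (a punctured copy of $\overline{W}_{n+2}$) and over a neighborhood $Z_n^*$ of the exceptional divisor; the latter factor is the one handled by Proposition~\ref{blowup}.

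For $n=1$, since $W_3$ is the non-spin cobordism, $\overline{W}_3$ is not spin and the proof of Proposition~\ref{blowup} shows that the $Z_1^*$ factor cancels in $\jmath$-invariant homology (or vanishes outright on non self-conjugate spin$^c$ structures), so that the only surviving contribution at $T=-\infty$ is zero and one obtains the claimed nullhomotopy of $\check{f}_2\circ \check{f}_1$. For $n=2,3$ the blown-up cobordism $\overline{W}_{n+2}$ is spin, so the $T=-\infty$ stratum assembles into a well-defined chain map $\check{g}_n$: explicitly, counting the fibered products of the moduli on $B_n^*$ with the canonical reducible contributions on $Z_n^*$ yields, after passing to homology, the map $\HSt_{\bullet}(\overline{W}_{n+2})$ twisted by the factor $Q^2 V^{\lfloor k(k-1)/4\rfloor}$ or $0$ dictated by Proposition~\ref{blowup}, according to the residue of $k\bmod 4$.

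The main obstacle will be verifying transversality and compactness uniformly in $T$ within the $\Pin$-equivariant category, since along the ``spin branch'' of the family the Dirac operator stays quaternionic and forces non-generic two-dimensional eigenspaces. This is handled by the same device used in the proof of Proposition~\ref{modulib2p} and in Section~4.2 of \cite{Lin}: one enlarges the class of $\Pin$-equivariant $\textsc{asd}$-perturbations on $X_n^*$ to produce a $\jmath$-invariant regularization that is compatible with the chosen three-dimensional perturbations at the ends, so that the parametrized moduli spaces are cut out transversely away from $T=\pm\infty$, their compactifications are $\delta$-chains in the sense of Section~3.1 of \cite{Lin}, and the codimension-one stratification acquires the form claimed. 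Once this regularity is in place, the chain-level identity reduces to the bookkeeping of boundary strata of one-dimensional fibered products, which is formally the same combinatorics used to establish the composition law and therefore produces the matrix expression displayed for $\check{H}_1$ verbatim for $\check{H}_2$ and $\check{H}_3$.
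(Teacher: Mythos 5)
Your proposal matches the paper's proof in both structure and substance: the chain-homotopy identity is read off from the codimension-one strata of the one-parameter family of moduli spaces parametrized by $Q(S_1,Y_2)$, with the finite-$T$ boundary producing $\check{\partial}\check{H}_n+\check{H}_n\check{\partial}$, the $T=+\infty$ end producing $\check{f}_{n+1}\circ\check{f}_n$ via the composition law, and the $T=-\infty$ end producing zero for $n=1$ and $\check{g}_n$ for $n=2,3$. One small imprecision worth noting: for $n=1$ the $T=-\infty$ stratum must vanish at the \emph{chain} level (otherwise $\check{H}_1$ would not be a genuine nullhomotopy), and the mechanism is the one in the non-self-conjugate case of Proposition~\ref{blowup} — the conjugate pair of spin$^c$ structures on the non-spin $\overline{W}_3$ contributing cancelling terms over $\ztwo$ — rather than a cancellation ``in $\jmath$-invariant homology''; your parenthetical names the right mechanism, but the alternative you offer would not suffice for the statement claimed.
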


\begin{proof}
One just has to identify the contributions of the various codimension one strata of the moduli spaces $M_z^+([\Cr_-], X_1^*, [\Cr_+])_{\bar{Q}}$. The closure of the union of the codimension one strata of $M_z^+([\Cr_-], X_1^*, [\Cr_+])_{T}$ for $T$ finite corresponds to the left hand side. The moduli space $M_z^+([\Cr_-], X_1^*, [\Cr_+])_{+\infty}$ consists of the fibered products of the moduli spaces used to define the chain map on the right hand side. The moduli spaces $M_z^+([\Cr_-], X_1^*, [\Cr_+])_{-\infty}$ define the zero map at the chain level. This is a manifestation at the chain level of the phenomenon underlying the proof of Proposition \ref{blowup} in the non self-conjugate case. The case of the other cobordisms is analogous and follows the proof of Proposition \ref{blowup}.
\end{proof}

\vspace{0.5cm}
We can then form as in Lemma \ref{triangledetection} an iterated mapping cone $\check{C}$ of the three chain complexes $\check{C}_1,\check{C}_2, \check{C}_3$, the chain maps $\check{f}_1, \check{f}_2$ and the chain nullhomotopy $\check{H}_1$. As that lemma states, the main result underlying the existence of an exact triangle is the following.
\begin{prop}\label{mainiso}
The homology of the iterated mapping cone $\check{C}$ is zero.
\end{prop}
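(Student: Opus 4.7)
The plan is to exhibit an explicit chain contraction of $\check C$, following the strategy developed in Kronheimer--Mrowka--Ozsv\'ath--Szab\'o (Chapter 42 of \cite{KM}) and adapting it to the $\Pin$-equivariant set-up, paying careful attention to the distinguished role of the non-spin cobordism $W_3$ and to the Morse-Bott / $\delta$-chain framework from \cite{Lin}.

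The construction centres on the triple composite $X = W_3 \circ W_2 \circ W_1$ from $Y_1$ to itself. Successive two-handle attachments produce two embedded $(-1)$-spheres $E_1 \subset W_2 \circ W_1$ and $E_2 \subset W_3 \circ W_2$, each bounded by a Clifford hypersurface $S_1$, $S_2$. Generalising the one-parameter family $Q(S_1, Y_2)$ used above to construct $\check H_1$, I would parametrise metrics on $X$ by a convex two-dimensional polytope $P$ whose codimension-one strata are: (a) neck-stretching along the middle three-manifolds $Y_2$ and $Y_3$; (b) neck-stretching along the Clifford hypersurfaces $S_1$ and $S_2$; and (c) one further stratum stretching along the outgoing $Y_1$ neck. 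Choosing a $\Pin$-equivariant perturbation (as in the proof of Proposition \ref{modulib2p}) and counting the resulting parametrised moduli spaces defines components that assemble into an $\ztwo$-linear map $\check K : \check C \to \check C$ of degree $+1$.

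Next I would identify each boundary stratum of $P$ with a matrix entry of $\check\partial$ or of $\mathrm{id}_{\check C}$. The strata of type (a) reproduce, exactly as in the proof of Lemma \ref{homotopies}, the chain-map entries $\check f_1, \check f_2$ and the homotopy $\check H_1$, placed in the standard iterated-cone positions. The strata of type (b) split off moduli spaces over neighbourhoods of $E_i$ which, by Proposition \ref{blowup}, contribute chain homotopies of the form $\check H_2, \check H_3$ identifying $\check f_{n+1}\circ \check f_n$ with the blown-up chain maps $\check g_n$. The stratum of type (c) at $T = \infty$ along the outgoing $Y_1$ end produces a chain map $\check C_1 \to \check C_1$ computing the map that $X$ itself induces on $\HSt_\bullet(Y_1)$. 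The crucial computation is that this last map equals $\mathrm{id}_{\check C_1}$ up to chain homotopy: after blowing down $E_1$ and $E_2$ the cobordism $X$ becomes $[0,1]\times Y_1$ carrying only non-self-conjugate spin$^c$ structures coming from the two blow-downs, and the resulting reducible moduli spaces can be enumerated directly, yielding the identity after summing over those spin$^c$ summands. Adding everything up gives $\check \partial \check K + \check K \check \partial = \mathrm{id}_{\check C}$, so $\check C$ is acyclic.

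The principal obstacle will be step (c): verifying that the triple-composite contribution is, at the chain level, the identity on $\HSt_\bullet(Y_1)$. The modulo-four behaviour of the blow-up formula in Proposition \ref{blowup}, together with the observation that $\check F_3$ is genuinely \emph{not} the map induced by $W_3$, forces a delicate spin$^c$ bookkeeping: one must track which summands arise as self-conjugate and which as conjugate pairs, and match the $Q^2 V^{\lfloor k(k-1)/4\rfloor}$ factors correctly. A secondary technical point is achieving simultaneous transversality for the whole two-parameter family $P$ while retaining $\Pin$-equivariance; this will rely on the refined class of $\Pin$-equivariant $\textsc{asd}$-perturbations from Section 4.2 of \cite{Lin} and on the fact that non-equivariant perturbations are permissible on the non-spin pieces of $X$, exactly as in the proof of Proposition \ref{blowup}.
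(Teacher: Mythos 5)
The paper does not construct a chain contraction of $\check{C}$, and you cannot expect to. The strategy in the paper (following \cite{KMOS}, Chapter 42 of \cite{KM}) is to build, from the pentagon family of metrics on each triple composite $V_n$, a map $\check{G}$ and then set
\begin{equation*}
\check{\varphi}=\check{\partial}\circ\check{G}+\check{G}\circ\check{\partial},
\end{equation*}
which is nullhomotopic \emph{by construction}; the content of the proof is then to show that $\check{\varphi}$ is a quasi-isomorphism, which forces $H_*(\check{C})=0$. Your proposal attempts the stronger statement $\check{\partial}\check{K}+\check{K}\check{\partial}=\mathrm{id}_{\check{C}}$, and the principal step (c) where you try to make this exact is the one that fails: the contribution of the triple-composite stratum is not the identity but rather a map $\check{L}_n+\check{h}_n$ whose "leading" part $\check{L}_{n,1}$ is multiplication by a unit power series in $V$ with constant term $1$, and whose remaining summands involve nilpotents such as $Q^2$ and the class $[K]\in\Lambda^*(H_1(Y;\mathbb{Z})/\mathrm{Tor}\otimes\ztwo)$. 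That this is an isomorphism (and in particular that the resulting nullhomotopic chain map is a quasi-isomorphism) requires a separate algebraic argument, which the paper carries out by passing to the mapping cone of $\check{\varphi}$, endowing it with a filtration, and showing the $E^3$ page vanishes using Lemma \ref{homotopies}, Lemma \ref{mainchain}, and the $\Rin$-module structure from Lemma \ref{modulestr}.

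There is also a concrete geometric error in your choice of pentagon. The five separating hypersurfaces in $V_1=W_1\cup_{Y_2}W_2\cup_{Y_3}W_3$ are $Y_2$, $Y_3$, $S_1$, $S_2$, and $R_1$, where $R_1\cong S^1\times S^2$ is the boundary of a regular neighbourhood of both exceptional spheres $E_1$, $E_2$. You have replaced $R_1$ by "the outgoing $Y_1$ neck," but $Y_1$ is a boundary component of $V_1$, not an interior separating hypersurface, so there is no such stratum in the pentagon. The $R_1$ edge is essential: it is precisely the analysis of the reducibles on the punctured double-blow-up piece $N_1$, converging on the $R_1$ end, that yields the leading $1$ in $\check{L}_{n,1}$ (this is Lemma \ref{mainchain}), and it is the torus of flat connections on $S^1\times S^2$ that produces the unavoidable nilpotent correction terms. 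These correction terms are exactly why the naive contraction $\check{K}$ with $\check{\partial}\check{K}+\check{K}\check{\partial}=\mathrm{id}$ is out of reach and a quasi-isomorphism argument is needed instead.

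A smaller point: you speak as if the $\check{H}_2$, $\check{H}_3$ are nullhomotopies like $\check{H}_1$. They are not; only the composite $W_2\circ W_1$ (the non-spin $\overline{W}_3$ blown up) has vanishing induced map, while $\check{H}_2$, $\check{H}_3$ are homotopies between $\check{f}_{n+1}\circ\check{f}_n$ and the \emph{non-zero} blown-up chain maps $\check{g}_n$ of Lemma \ref{homotopies}. This asymmetry is precisely why only one edge of the triangle, $\check{F}_3$, fails to be a cobordism map, and it must be tracked carefully throughout the spectral-sequence argument (the diagonal maps $\check{g}_2$, $\check{g}_3$ on the $E^2$ page must be shown to vanish on the relevant subquotients).
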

To prove this assertion, we will construct a chain map $\check{\varphi}$ from $\check{C}$ to itself homotopic to zero that induces isomorphism at the homology level. Let $V_1$ be the manifold obtained as the triple composite
\begin{equation*}
V_1=W_1\cup_{Y_2}W_2\cup_{Y_3} W_3.
\end{equation*}
We introduce a two dimensional family of (possibly degenerate) metrics parametrized by a pentagon as follows. This manifold contains five separating hypersurfaces, namely $Y_2$, $Y_3$, the two three spheres $S_1$ and $S_2$ and the manifold $R_1$ homeomorphic to $S^1\times S^2$ which is the boundary of a regular neighborhood of the $(-1)$-spheres $E_1$ and $E_2$ containing both $S_1$ and $S_2$. We can arrange them cyclically as $Y_2,R_1, Y_3, S_2$ and $S_1$ so that each of them intersects only its two neighbors, see Figure \ref{triplecomp}.
\begin{figure}[here]
  \centering
\def\svgwidth{0.7\textwidth}
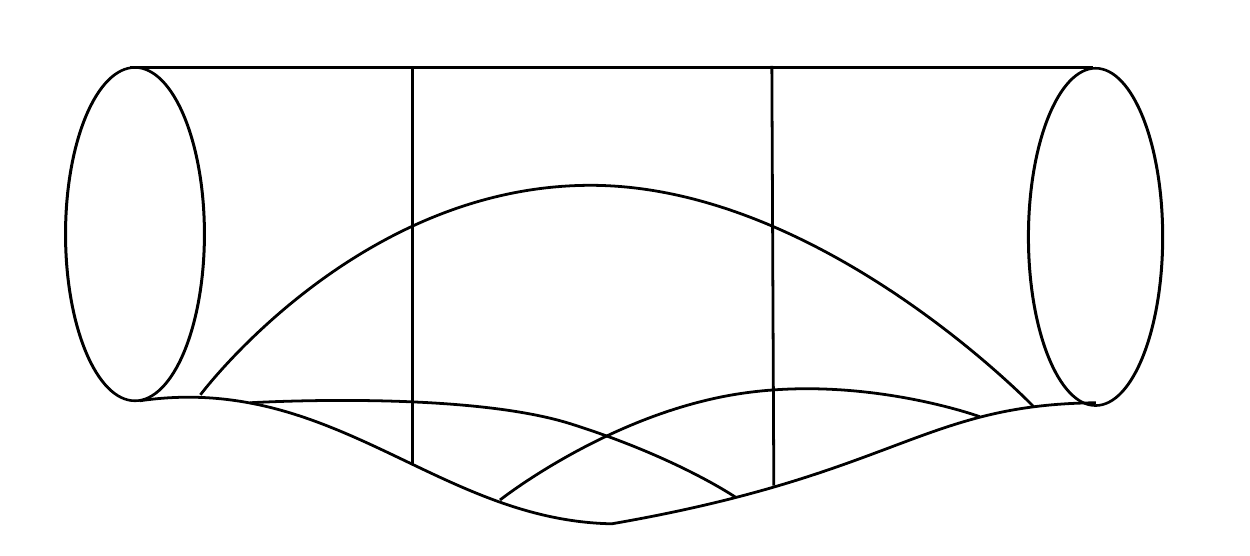
    \caption{The five hypersurfaces in the triple composite.}\label{triplecomp}
\end{figure} 
For each pair $S,S'$ of \textit{non} intersecting hypersurfaces, we define the family of metrics parametrized by $\mathbb{R}^{>0}\times\mathbb{R}^{>0}$ by inserting cylinders $[-T_S,T_S]\times S$ and $[-T_{S'},T_{S'}]\times S$. This can be completed to a family of riemannian metrics over the ``square''
\begin{equation*}
\bar{P}(S,S')\cong [0,\infty]\times [0,\infty].
\end{equation*}
The five families obtained this way fit along their five edges corresponding to families of metrics in which only one of the $T_S$ is nonzero. Hence we can obtain a family of metrics on the pentagon $\bar{P}$ obtained as their union, see Figure \ref{pentagon}. For each hypersurface $S$ there is an edge $\bar{Q}_S$ of the pentagon (consisting of two of the edges of the squares) where $T_S=\infty$. One can arrange that the family of metrics is such that $R_1,S_1$ and $S_2$ have positive scalar curvature metrics.
\begin{figure}[here]
  \centering
\def\svgwidth{0.5\textwidth}
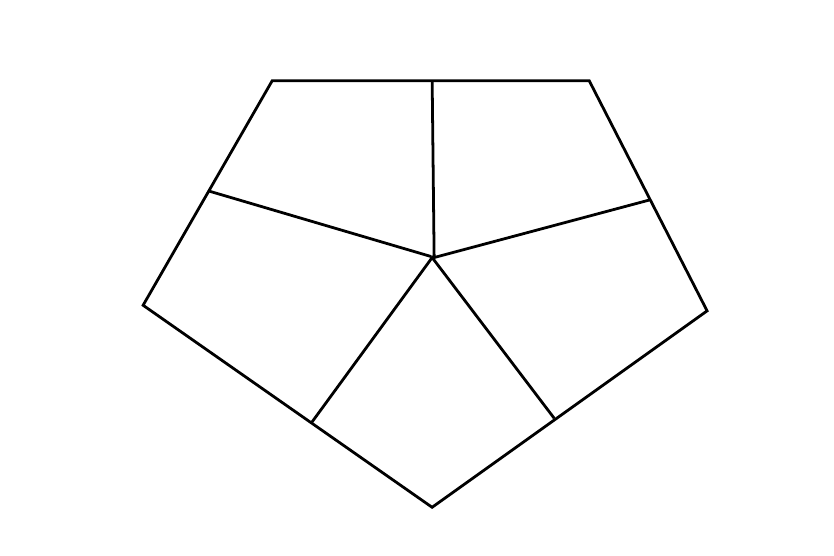
    \caption{The family of metrics $\bar{P}$.}\label{pentagon}
\end{figure} 
\par
One then considers the compactified moduli spaces of solutions parametrized by such a family, and use them to construct maps between the chain complexes. The two strata corresponding to the edges $Q(Y_2)$ and $Q(Y_3)$ are exactly those that define the maps $\check{H}_2\circ \check{f}_1$ and $\check{f}_3\circ \check{H}_1$. Notice that unlike the classical case, the sum of these two maps is not a chain map, as $\check{H}_2$ is not a chain homotopy between $\check{f}_3\circ \check{f}_2$ and zero. The maps corresponding to the edges $Q(S_1)$ and $Q(S_2)$ correspond punctured cobordisms with an additional punctured $\overline{\mathbb{C}P}^2$ component.
\par
The most interesting edge is the one given by $Q(R_1)$. The hypersurface $R_1$ is homeomorphic to $S^2\times S^1$, hence because of positive scalar curvature the only interesting spin$^c$ structure is the one with torsion first Chern class $\spin_0$. As in Section $4.4$ of \cite{Lin} we can fix a small regular perturbation with only two reducible solutions in the blow-down $\alpha_1$ and $\alpha_0$ corresponding to the spin connections $B_1$ and $B_0$ and no irreducible solutions. This is induced by a smooth $\jmath$-invariant Morse function on the torus $\mathbb{T}$ of flat connections
\begin{equation*}
f:\mathbb{T}\rightarrow\mathbb{R}
\end{equation*}
with exactly two critical points via a gauge equivariant retraction of $\mathcal{B}^{\sigma}(R_1,\spin_0)$ onto $\mathbb{T}$. We can suppose that $\alpha_1$ is the maximum, so that there are exactly two trajectories connecting $\alpha_1$ to $\alpha_0$ in the blow-down. We call the critical submanifolds in the blow-up $[\mathfrak{C}^{\mu}_{i}]$ for $\mu=0,1$ and $i\in\mathbb{Z}$. Here the superscript indicates the reducible solution on which the submanifold is lying over, and the index the eigenvalue it corresponds to. As usual, the index zero is for the first stable critical submanifold. In this case the contibutions of moduli spaces lying over the two trajectories connecting $\alpha_1$ to $\alpha_0$ in the blow-down cancel each other so that the homology is just the direct sum of the homologies of the critical submanifolds.
\par
This hypersurface $R_1$ defines a decomposition of the triple composite as the union of two four manifolds. The first, which we call $U_1$, has three boundary components and is the complement in $Y_1\times [-1,1]$ of a neighborhood of $K\times \{0\}$. The second one, which we call $N_1$, is the complement in $\overline{\mathbb{C}P}^2$ of an unknotted loop. Its second homology is generated by the two exceptional spheres $E_1$ and $E_2$, and spin$^c$ structures that restricts to $\spin_0$ on the boundary is uniquely determined by
\begin{equation*}
\langle c_1(\spin_k),[E_1]\rangle=\langle c_1(\spin_k),[E_2]\rangle=2k-1.
\end{equation*}
On the manifold with one cylindrical end $(N_1)^*$ we can consider the moduli spaces $M_k(N_1^*, [\mathfrak{C}_i^{\mu}])_{\bar{Q}}$ where $\bar{Q}$ is $\bar{Q}(R_1)$ relative to the spin$^c$ structure $\spin_k$ of solutions converging to $[\mathfrak{C}^{\mu}_i]$. We have the following lemma (see Lemma $5.7$ and following corollaries in \cite{KMOS}).
\begin{lemma}\label{dimension}
The dimension of the moduli space $M_k(N_1^*, [\mathfrak{C}_i^{\mu}])_{\bar{Q}}$ is given by
\begin{equation*}
\mathrm{dim} M_k(N_1^*, [\mathfrak{C}_i^{\mu}])_{\bar{Q}}=
\begin{cases}
-\mu-k(k-1)-4i, & i\geq 0\\
-\mu-k(k-1)-4i-1 & i<0.
\end{cases}
\end{equation*}
In particular the moduli spaces $M^{\mathrm{red}}_k(N_1^*, [\mathfrak{C}_i^{\mu}])_{\bar{Q}}$ are empty for all $i\geq 0$.
\end{lemma}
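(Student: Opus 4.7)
The plan is to apply the dimension formula for Seiberg--Witten moduli spaces on a four-manifold with one cylindrical end, adapted both to a Morse--Bott asymptotic and to the one-parameter family of metrics $\bar{Q}=\bar{Q}(R_1)$, and then to separately bookkeep four contributions: the topological index on $N_1$ for the spin$^c$ structure $\spin_k$, the $+1$ coming from the family of metrics, the real dimension of the limiting critical submanifold $[\mathfrak{C}_i^{\mu}]\cong \mathbb{C}P^1$, and the absolute grading shift of $[\mathfrak{C}_i^{\mu}]$ relative to a chosen baseline $[\mathfrak{C}_0^0]$. First I would compute the topological contribution from the intersection-form data $\langle c_1(\spin_k),[E_1]\rangle=\langle c_1(\spin_k),[E_2]\rangle=2k-1$, together with $\chi(N_1)$ and $\sigma(N_1)$ read off from the description of $N_1$ in terms of punctured blow-ups; combined with the $+1$ from the family of metrics and the real dimension of $\mathbb{C}P^1$, this contributes the $-k(k-1)$ summand, modulo a constant that gets absorbed in the baseline grading normalization.

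Next I would analyse the grading of $[\mathfrak{C}_i^{\mu}]$ using the Morse--Bott structure on $R_1=S^2\times S^1$. The circle of flat connections $\mathbb{T}$ is perturbed by a $\jmath$-invariant Morse function with a maximum $\alpha_1$ and a minimum $\alpha_0$, producing the $\mu\in\{0,1\}$ contribution as the downstairs Morse index. In the blow-up each $\alpha_\mu$ generates a tower of $\mathbb{C}P^1$-critical submanifolds indexed by the generically simple eigenvalues of the quaternionic Dirac operator $D_{B_\mu}$, giving a grading shift of $4$ per unit step of $i$ within the stable ($i\geq 0$) or the unstable ($i<0$) half, together with the characteristic $-1$ jump accompanying the transition across the boundary-obstructed stratum between $\mathfrak{C}_{-1}^{\mu}$ and $\mathfrak{C}_0^{\mu}$ in $\mathcal{B}^\sigma(R_1,\spin_0)$. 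Summing these four contributions will yield the stated formula. For the emptiness of the reducible locus when $i\geq 0$, the equations decouple on reducible configurations: the connection component must solve the perturbed anti-self-duality equation with unique solution $A_k$ modulo gauge (using $b_2^+(N_1)=0$), while the spinor must lie in $\ker D_{A_k}$ with asymptotics in the $i$-th eigenspace of $D_{B_\mu}$; an APS-type index computation then shows that the expected dimension of the reducible locus differs from that of the full moduli space by twice the complex index of the cylindrical-end Dirac operator, and becomes negative exactly when $i\geq 0$.

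The hardest step will be the grading bookkeeping, in particular verifying that $\mathfrak{C}_{-1}^{\mu}$ and $\mathfrak{C}_0^{\mu}$ differ by $3$ rather than $4$: this is the $\Pin$-equivariant, Morse--Bott analogue of the boundary-obstructed shift underlying the proof of Lemma 5.7 in \cite{KMOS}, and once this grading jump is pinned down the remainder of the argument parallels that reference upon replacing individual reducible critical points by $\mathbb{C}P^1$-submanifolds and the complex Dirac operator by its quaternionic refinement.
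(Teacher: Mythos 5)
The paper does not actually prove Lemma \ref{dimension}; it cites Lemma~5.7 and the following corollaries of \cite{KMOS}, and the formula here is obtained by adapting that computation to the $\Pin(2)$ setting. Your breakdown into (i) the topological index of $(N_1,\spin_k)$, (ii) the $+1$ from the family $\bar Q(R_1)$, (iii) the $+2$ from the Morse--Bott critical submanifold, (iv) the downstairs Morse index $\mu$ on $\mathbb{T}$, and (v) the stable/unstable index $i$ with a grading jump at the boundary-obstructed transition, is the right skeleton and does indeed parallel \cite{KMOS}, so the overall strategy is sound.

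There are, however, two places where the sketch would need repair before it is a proof. First, the assertion that the $-k(k-1)$ term appears ``modulo a constant that gets absorbed in the baseline grading normalization'' is exactly the content one must verify: the constant is fixed by the absolute gradings of $[\mathfrak{C}_0^0]$ on $R_1=S^2\times S^1$, and a quick numerical check (e.g.\ against the computation of $M_k$ on $(\CPbar)^*$ used in the proof of Proposition~\ref{blowup}, or against Lemma~\ref{mainchain} where one needs $\dim = 2$ at $i=-k(k-1)/4-1$) is required. Second, the phrase ``replacing\dots the complex Dirac operator by its quaternionic refinement'' conflates two different operators: since $\spin_k$ on $N_1$ is \emph{never} self-conjugate (its first Chern class is nonzero), the cylindrical-end Dirac operator $D^+_{A}$ on $N_1^*$ is only complex linear, and the $\jmath$-symmetry is implemented by swapping $\spin_k\leftrightarrow\spin_{1-k}$ rather than by an internal quaternionic structure on a single moduli space. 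It is only the boundary Dirac operator on $(R_1,\spin_0)$ that is quaternionic, and that is what produces the $\mathbb{C}P^1$ critical submanifolds and the ``$4$ per step of $i$'' shift in the APS boundary condition. This distinction also matters for your emptiness claim: the statement that ``the expected dimension of the reducible locus differs from that of the full moduli space by twice the complex index of the cylindrical-end Dirac operator'' is not the relevant relation; what one needs is that the (complex) index of $D^+_{A}$ on $N_1^*$ with the APS boundary condition set by the $i$-th eigenspace of $D_{B_\mu}$ is non-positive for $i\geq 0$, so that generically there is no kernel, hence no reducible solutions.
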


The moduli spaces above define chains in the critical submanifolds of $R_1$. In particular, we can consider $\sigma^s$, the one in the stable critical manifolds, $\sigma^u_0$ in the unstable critical manifolds above $\alpha_0$ and $\sigma^u_1$ in the critical manifolds above $\alpha_1$. We use the chains $\sigma^u_{\mu}$ in the unstable critical submanifolds to define the linear maps
\begin{equation*}
L^{o}_{o,\mu}: C^o(Y_1)\rightarrow C^o(Y_1)
\end{equation*}
and its seven companions as the map induced by the manifold $U_1$ with three ends by fibered product with $\sigma^u_{\mu}$ on the $R_1$ end. We denote the sum of the two by dropping the $\mu$ and we can combine then in the maps
\begin{align*}
\check{L}_1&=\check{L}_{1,0}+\check{L}_{1,1}: \check{C}(Y_1)\rightarrow \check{C}(Y_1)\\
\check{L}_1&=\begin{bmatrix}
L^o_o & L^u_o\bar{\partial}^s_u+\partial^u_o\bar{L}^s_u\\
L^o_s & \bar{L}^s_s+L^u_s\bar{\partial}^s_u+\partial^u_s\bar{L}^s_u.
\end{bmatrix}
\end{align*}
Similarly, one can define the map
\begin{equation*}
G^o_o: C^o(Y_1)\rightarrow C^o(Y_1)
\end{equation*}
and its seven companions induced by the fiber products with the moduli spaces on the triple composite parametrized by the pentagon of metrics $\bar{P}$, and the maps
\begin{align*}
\bar{r}^s_u: C^s(Y_1)\rightarrow C^u(Y_1)\\
\bar{r}^s_s: C^s(Y_1)\rightarrow C^s(Y_1)
\end{align*}
obtained by fiber products of the moduli spaces on the manifold with three ends $U_1^*$ with the chain $\sigma^s$ in the critical stable manifolds of $R_1$.
Finally we define
\begin{align*}
\check{G}_1&: \check{C}_{\bullet}(Y_1)\rightarrow \check{C}_{\bullet}(Y_1)\\
\check{G}_1&=\begin{bmatrix}
a & b \\ c & d
\end{bmatrix}
\end{align*}
where
\begin{align*}
a&= G^o_o\\
b&=\partial^u_o\bar{G}^s_u+G^u_o\bar{\partial}^s_u+m^u_o\bar{H}^s_u+H^u_o\bar{m}^s_u+\partial^u_o\bar{r}^s_u\\
c&=G^o_s\\
d&=\bar{G}^s_s+\partial^u_s\bar{G}^s_u+G^u_s\bar{\partial}^s_u+m^u_s\bar{H}^s_u+H^u_s\bar{m}^s_u+\partial^u_s\bar{r}^s_u+\bar{r}^s_s.
\end{align*}
Here again the $m^*_*$ are the components of the corresponding maps $\check{f}_i$. The following is the analogue of Proposition $5.5$ in \cite{KM}. We rephrase it in an alternative way because in our case the map $\check{f}_3\circ \check{H}_1+\check{H}_2\circ\check{f}_1$ is not a chain map.
\begin{lemma}\label{homotopyG}
The map $\check{G}_1$ is a chain homotopy between the map arising as the sum of $\check{L}_1$ and the maps induced by the moduli spaces parametrized by $Q(S_1)$ and $Q(S_2)$ and the map $\check{f}_3\circ \check{H}_1+\check{H}_2\circ\check{f}_1$.
\end{lemma}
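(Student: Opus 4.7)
The strategy is the standard ``moduli space with corners'' argument adapted to the two-parameter family $\bar{P}$. The map $\check{G}_1$ is defined by counting isolated points in the compactified parametrized moduli spaces $M_z^+([\Cr_-], V_1^*, [\Cr_+])_{\bar{P}}$, together with the auxiliary fibered product chains $\bar{r}^s_u, \bar{r}^s_s$ coming from the stable critical submanifolds of $R_1$. The chain homotopy identity is then read off from the codimension one boundary of the one-dimensional components of these moduli spaces. That boundary splits into (i) trajectory breaking at the two cylindrical ends $Y_1$, which produces $\check{\partial}\check{G}_1 + \check{G}_1\check{\partial}$ together with the ``mixed'' components $\partial^u_o\bar{G}^s_u$, $m^u_o\bar{H}^s_u$ and their companions already built into the matrix entries of $\check{G}_1$; and (ii) degenerations onto the five edges of $\bar{P}$.

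Each of the five edges contributes one of the terms on the right-hand side of the claimed identity. On $Q(Y_2)$ the cobordism separates along $Y_2$, leaving a one-parameter family of metrics on $W_2 \cup_{Y_3} W_3$ that is exactly the one defining $\check{H}_2$; this produces $\check{H}_2\circ \check{f}_1$. Symmetrically $Q(Y_3)$ yields $\check{f}_3\circ \check{H}_1$. The edges $Q(S_1)$ and $Q(S_2)$ split off a punctured $\overline{\mathbb{C}P}^2$ carrying $E_1$ or $E_2$, and contribute the two maps named in the statement.

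The delicate edge is $Q(R_1)$, where the cobordism decomposes as $U_1\cup_{R_1} N_1$ and $R_1\cong S^1\times S^2$ supports the two spin reducibles $\alpha_0,\alpha_1$ with the Morse-Bott critical set described earlier. The fibered products along the unstable critical submanifolds of $R_1$, using the chains $\sigma^u_\mu$ produced by the moduli spaces on $N_1^*$, assemble by definition into $\check{L}_1$. The fibered products along the stable critical submanifolds, using $\sigma^s$, produce the maps $\bar{r}^s_u$ and $\bar{r}^s_s$; because these are absorbed into the definition of $\check{G}_1$, they cancel against the corresponding strata appearing in $\check{\partial}\check{G}_1 + \check{G}_1\check{\partial}$ rather than surviving in the chain homotopy identity. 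Lemma \ref{dimension} is what makes this work: it eliminates reducible contributions from critical submanifolds of index $i\geq 0$, so only the specific $\bar{r}^s$-terms actually appear and need to be bookkept.

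The main obstacle is the accounting at the five vertices of $\bar{P}$, where two neck stretchings occur simultaneously. Each corner appears on two adjacent edges, and its contributions must cancel across them; this is precisely what dictates the presence of the cross terms $m^u_o\bar{H}^s_u$, $H^u_o\bar{m}^s_u$, $G^u_o\bar{\partial}^s_u$ and their stable counterparts in the definition of $\check{G}_1$, exactly as in the chain-level composition law of Chapter $26$ in \cite{KM} and Section $3.3$ of \cite{Lin}. Once this vertex bookkeeping is in place and the dimensional restrictions of Lemma \ref{dimension} are invoked to discard degenerate strata, summing the five edge contributions produces the claimed identity $\check{\partial}\check{G}_1 + \check{G}_1\check{\partial} = \check{L}_1 + [Q(S_1)] + [Q(S_2)] + \check{f}_3\circ\check{H}_1 + \check{H}_2\circ\check{f}_1$.
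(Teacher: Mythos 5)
Your proposal is correct and follows essentially the same route as the paper: identify the codimension one strata of the compactified moduli spaces over the pentagon $\bar{P}$, with breaking at the two $Y_1$ ends giving $\check{\partial}\check{G}_1+\check{G}_1\check{\partial}$, the edges $Q(Y_2)$ and $Q(Y_3)$ giving $\check{H}_2\circ\check{f}_1$ and $\check{f}_3\circ\check{H}_1$, the edges $Q(S_1)$, $Q(S_2)$ giving the blow-up maps, and the edge $Q(R_1)$ giving $\check{L}_1$ via the chains $\sigma^u_\mu$, with the $\bar{r}^s_*$ terms built into $\check{G}_1$ handling the boundary-obstructed contributions along $R_1$. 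This matches the paper's (much terser) argument, which simply cites the KMOS formulas for the strata and identifies the $Q(Y_2)$, $Q(Y_3)$ edges with the last map in the statement.
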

\begin{proof}
The proof follows as usual by identifying the codimension one strata of the moduli spaces involved in the definition of $\check{G}_1$, which are described by the same formulas as in \cite{KMOS}. In particular the last map in the statement corresponds to the edges of the pentagon $Q(Y_2)$ and $Q(Y_3)$.
\end{proof}

\vspace{0.5cm}
We have the following key result, which is the analogue in our case of Lemma $5.10$ in \cite{KMOS}. Because our critical submanifolds are two dimensional, for our purposes we are only interested in the moduli spaces which have dimension at most two, as the others are degenerate as discussed in Section \ref{review}.
\begin{lemma}\label{mainchain}
Suppose $k\geq 1$. Then the chain $\sigma^u_1$ consists of a generator of the top homology of the critical submanifold $[\mathfrak{C}^1_i]$ for $i=-k(k-1)/4-1$ for $k$ congruent to $0,1$ modulo four, while for $k=2,3$ modulo four it consists of an even number of points in $[\mathfrak{C}^1_i]$ for $i=-k(k-1)/4-1/2$.
\end{lemma}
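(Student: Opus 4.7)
The plan is to analyze the moduli spaces $M_k(N_1^*, [\mathfrak{C}^1_i])_{\bar{Q}}$ explicitly and to identify the resulting chain contributions to $\sigma^u_1$ in the critical submanifolds $[\mathfrak{C}^1_i]\cong \mathbb{C}P^1$. First I would apply Lemma \ref{dimension}: for $\mu=1$ and $i<0$ the dimension is $-k(k-1)-4i-2$. Matching this to $2$ forces $i=-k(k-1)/4-1$, which is an integer precisely when $k\equiv 0,1\pmod{4}$; matching it to $0$ forces $i=-k(k-1)/4-1/2$, integer precisely when $k\equiv 2,3\pmod{4}$. All other $(k,i)$ pairs yield either empty moduli spaces or moduli spaces of dimension larger than that of the critical submanifold, which are degenerate in the sense of Section \ref{review}. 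Hence the non-trivial contributions to $\sigma^u_1$ are supported on the two families of critical submanifolds claimed in the lemma.

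Next I would identify the geometry of these moduli spaces. Because $b_2^+(N_1)=0$, for a small perturbation each spin$^c$ structure $\spin_k$ admits a unique reducible anti-self-dual connection $A_k$, and the corresponding moduli space reduces to a projectivization of $\ker D_{A_k}^+$ on the cylindrical-end manifold $N_1^*$ subject to the asymptotic condition that its restriction on the $R_1$ end lie in the eigenspace $V_i$ over which $[\mathfrak{C}^1_i]$ is the projectivization. Since $R_1\cong S^1\times S^2$ carries the spin structure inducing $\spin_0$, the Dirac operator $D_{B_0}$ on $R_1$ is quaternionic and $V_i$ is a quaternionic (complex rank two) eigenspace; $[\mathfrak{C}^1_i]$ is the associated Hopf quotient $\mathbb{C}P^1$. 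The family $\bar{Q}(R_1)$ stretches the neck along $R_1$ and contributes one extra parameter to the parametrized moduli space, accounting for the dimension formula above.

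For the dimension-two case $k\equiv 0,1\pmod{4}$ I would follow the analysis of Lemma $5.10$ in \cite{KMOS}. The Dirac kernel on $N_1^*$ has complex rank two, and its asymptotic trace on $R_1$ surjects onto $V_i$; projectivizing therefore produces a degree-one map $\mathbb{C}P^1\to [\mathfrak{C}^1_i]$ at the stretched limit $T=\infty$. This degree is a deformation invariant of the family $\bar{Q}$, so the chain $\sigma^u_1$ restricted to $[\mathfrak{C}^1_i]$ represents a generator of the top homology, as claimed. For the dimension-zero case $k\equiv 2,3\pmod{4}$, the Dirac kernel has complex rank one and the moduli space consists of isolated parameter values $T\in \bar{Q}(R_1)$ at which the asymptotic direction of this kernel meets the quaternionic eigenspace $V_i$. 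The evenness of the resulting count then follows from the quaternionic nature of $V_i$: the normal bundle to $V_i$ in the space of boundary values inherits a quaternionic structure from $D_{B_0}$, so spectral crossings of the parametrized family organize themselves into $\jmath$-paired orbits with no fixed points on the target $[\mathfrak{C}^1_i]$, yielding an even total count modulo two.

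The main obstacle will be making the degree-one identification in the dimension-two case fully rigorous once the compactification and boundary-obstructed strata are taken into account. To this end one must show that no codimension-one breaking along the family $\bar{Q}$ contributes to the cycle class in $[\mathfrak{C}^1_i]$, and similarly for the even-count argument in the dimension-zero case one must verify that the $\jmath$-pairing of spectral crossings is not disturbed by the $\Pin$-equivariant perturbations of Section $4.2$ of \cite{Lin}. Both issues can be handled by adapting the analysis in Section $5$ of \cite{KMOS} while tracking the quaternionic structure on the $R_1$ end throughout the argument.
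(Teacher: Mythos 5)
Your dimension bookkeeping is correct and matches the paper's use of Lemma~\ref{dimension}, and the opening observations (uniqueness of the reducible on $N_1^*$ for each metric because $b_1=0$ and $b_2^+=0$, and the identification of the moduli space with a projectivization evaluating into $[\mathfrak{C}^1_i]\cong\mathbb{C}P^1$) are consistent with the paper's setup. However, the core of the argument is missing the decisive geometric input, and the mechanism you propose in the two-dimensional case is not the one that makes the count nontrivial.

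The paper's proof hinges on the fact, imported from Lemma~$5.10$ of \cite{KMOS}, that the asymptotic flat connection $\theta_k(g)\in\mathbf{S}$ traced out by the unique reducible on $N_1^*$ moves from one spin connection to the other as $g$ traverses the family $\bar{Q}(R_1)=[-\infty,\infty]$: one may arrange $\theta_k(-\infty)=\alpha_1$ and $\theta_k(+\infty)=\alpha_0$. This is what makes the boundary stratum of the parametrized moduli space at $-\infty$ a two-sphere of reducibles over the spin connection (which, exactly as in Proposition~\ref{modulib2p}, is obstructed in codimension one and is replaced by an odd number of circles after introducing a suitable $\eta$-type perturbation), while the stratum at $+\infty$ is \emph{empty}. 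Your proposal instead places a degree-one projectivized Dirac map at a single ``stretched limit $T=\infty$'' and appeals to deformation invariance of its degree. This does not work as stated: the chain $\sigma^u_1$ is defined by the evaluation of the entire compactified \emph{parametrized} moduli space over $\bar{Q}(R_1)$, whose two endpoint strata a priori both contribute, and without knowing that $\theta_k$ differs at the two ends (and hence that exactly one endpoint stratum is empty) there is no reason the evaluation chain is a generator rather than a boundary. You also do not invoke the perturbation scheme of Proposition~\ref{modulib2p}; this is essential, since the $\mathbb{C}P^1$ of reducibles over the spin connection is \emph{not} transversely cut out, and the claim that it produces a generator of the $\jmath$-invariant top homology is precisely the output of that construction (yielding an odd number of circles, paired with its conjugate for $\spin_{1-k}$ under gluing).

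For the zero-dimensional case your conclusion is right but the mechanism in the paper is more elementary than the quaternionic spectral-flow argument you sketch: both endpoint strata at $\pm\infty$ are empty by transversality, so the moduli space is already compact before compactification, and evenness follows directly from the free $\jmath$-symmetry exchanging the solutions for $\spin_k$ with those for $\spin_{1-k}$ (both feeding into the same $[\mathfrak{C}^1_i]$). Your appeal to the quaternionic structure on $V_i$ and the absence of fixed points in the target $\mathbb{C}P^1$ is not by itself a count of the domain; the relevant pairing is between the $M_k$ and $M_{1-k}$ solution sets. In short, the proposal identifies the right numerical constraints but does not supply the two structural inputs the proof actually rests on: the monodromy of $\theta_k$ along $\bar{Q}(R_1)$ established in \cite{KMOS}, and the codimension-one obstruction/perturbation analysis of Proposition~\ref{modulib2p}.
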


\begin{proof}
We first recall a result on the unperturbed anti-self-duality equations
\begin{equation*}
F_{A^t}^+=0
\end{equation*}
on the manifold $N_1^*$ from the proof of Lemma $5.10$ in \cite{KMOS}. Given a metric $g$ on $N_{1}^*$ which is standard on the end there is a unique spin$^c$ solution to such equation $A(k,g)$ with $L^2$ curvature. This is because the manifold has no first homology and no self-dual, square integrable harmonic two forms (as the image of the relative second homology in the absolute one is zero). On the cylindrical end this connection $A(k,g)$ it is asymptotically flat so it defines a point
\begin{equation*}
\theta_k(g)\in\mathbf{S}
\end{equation*}
where $\mathbf{S}$ is the circle of flat spin$^c$ connections on $S^2\times S^1$. On the family of metrics $\bar{Q}(R_1)=[-\infty,\infty]$ we have that $\theta_k(\pm\infty)$ is a spin connection. For example at $-\infty$ the manifold decomposes in two pieces, one of which is a punctured $S^2\times D^2$ with cylindrical ends, so it carries no $L^2$ harmonic forms. The key point in the proof of Lemma $5.10$ in \cite{KMOS} is to show that the connection at the two ends of this family differ, so that without loss of generality we can assume $\theta_k(-\infty)=\alpha_1$ and $\theta_k(+\infty)=\alpha_0$. Furthermore map $\theta_{1-k}(g)$ is obtained by conjugation on the circle.
\par
We then consider the moduli spaces with asymptotics into $[\mathfrak{C}^1_i]$, which for dimensional reasons is interesting when it has dimension two or zero. In the first case, $k=0,1$ modulo $4$ and and $i=k(k-1)/4-1$. We claim that for some choice of perturbations:
\begin{itemize}
\item the stratum $M_k(N_1^*, [\mathfrak{C}_i^{1}])_{-\infty}$ is a generator of the one dimensional $\jmath$-invariant homology of the critical submanifold $[\mathfrak{C}_i^{1}]$;
\item the stratum $M_k(N_1^*, [\mathfrak{C}_i^{1}])_{+\infty}$ is empty.
\end{itemize}
As we are only dealing with reducible solutions, the first claim follows in the same way as in Proposition \ref{modulib2p}. Indeed, our cobordism has $b_1=0$ and before adding the extra perturbation in the blow up of the configuration space of the cobodism the stratum over $-\infty$ consists of a two dimensional sphere of reducibles lying over the spin connection. As in the case considered in Proposition \ref{modulib2p}, it is obstructed in codimension one, and the same construction of the additional perturbation carries over. Furthermore as in that setting the moduli spaces are already compact before we compactify them because there are no possible breaking points, as there are no trajectories from $\alpha_0$ to $\alpha_1$. The gluing results regarding our moduli spaces then imply that the union of the moduli spaces
\begin{equation*}
M_k(N_1^*, [\mathfrak{C}_i^{\mu}])_{\bar{Q}}\cup M_{1-k}(N_1^*, [\mathfrak{C}_i^{\mu}])_{\bar{Q}}
\end{equation*}
is a $\jmath$-equivariant generator of the top homology of the critical submanifold, hence the claim. The second claim is clear as $\theta_k(+\infty)$ is $\alpha_0$.
\par
Finally when $k=2,3$ modulo four the strata $M_k(N_1^*, [\mathfrak{C}_i^{1}])_{\pm\infty}$ are both empty by transversality, hence the moduli spaces consist of an even number of points by symmetry.
\end{proof}

\vspace{0.5cm}
Before proving Proposition \ref{mainiso}, we need to discuss the $\Rin$-module structure on the mapping cones we have defined.
\begin{lemma}\label{modulestr}
The mapping cone $H_*(M_{\check{f}_1})$ of the chain map $\check{f}_1$ is an $\Rin$-module, and the mapping cone triangle for $\HSt_{\bullet}(W_1)$ is an exact triangle of $\Rin$-modules. The coboundary map $\delta$ in Proposition \ref{mainiso} is a map of $\Rin$-modules.
\end{lemma}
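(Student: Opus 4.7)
The plan is to work at the chain level and build in the chain-level homotopies that witness the compatibility of each cobordism map with the $\Rin$-action. Recall that the $\Rin$-module structure on $\check{C}_{\bullet}(Y)$ is realized by considering, for each $\jmath$-invariant chain $r$ representing a class in $\HSf_{\bullet}(S^3)\cong\Rin\langle -1\rangle$, the moduli spaces on $(I\times Y)\setminus B^4$ with the additional incoming $S^3$-end asymptotic to $r$. Write $\mu_i(r,-)$ for the resulting chain-level operation on $\check{C}_{\bullet}(Y_i)$.

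For the mapping cone $M_{\check{f}_1}$, the key step is to construct, for each $\jmath$-invariant cycle $r$, a chain-level homotopy $K_1(r,-):\check{C}_{\bullet}(Y_1)\to \check{C}_{\bullet}(Y_2)$ witnessing that $\check{f}_1$ intertwines $\mu_1$ and $\mu_2$ up to homotopy. This is obtained by considering $W_1$ with a ball removed, treating the resulting $S^3$ boundary component as an additional incoming end, together with a one-parameter family of metrics in which the removed ball moves from a position near $Y_1$ to a position near $Y_2$. Counting solutions in the parametrized moduli spaces yields an operation satisfying the chain-homotopy identity
\begin{equation*}
\partial_2 K_1(r,-) + K_1(r,\partial_1 -) = \check{f}_1\circ \mu_1(r,-) + \mu_2(r,\check{f}_1-)
\end{equation*}
in $\ztwo$-coefficients. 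A direct calculation then shows that the formula
\begin{equation*}
\mu_{M_{\check{f}_1}}(r,(x_2,x_1))=\bigl(\mu_2(r,x_2)+K_1(r,x_1),\,\mu_1(r,x_1)\bigr)
\end{equation*}
defines a chain-level $\Rin$-action on $M_{\check{f}_1}$. The natural inclusion $\check{C}_2\hookrightarrow M_{\check{f}_1}$ and the projection $M_{\check{f}_1}\twoheadrightarrow \check{C}_1$ are strict module maps with this action, so the mapping cone triangle of $\check{f}_1$ is an exact triangle of $\Rin$-modules.

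For the iterated mapping cone $\check{C}$ appearing in Proposition \ref{mainiso}, an additional datum is needed: a second-order homotopy $J_1(r,-):\check{C}_{\bullet}(Y_1)\to \check{C}_{\bullet}(Y_3)$ encoding the module-compatibility of the nullhomotopy $\check{H}_1$. Together with the analogous $K_2$ for $\check{f}_2$, this assembles into an $\Rin$-action on $\check{C}$ given by the evident $3\times 3$ generalization of the formula above; the defining identity for $J_1$ has the schematic form
\begin{equation*}
\partial_3 J_1(r,-) + J_1(r,\partial_1-) = \check{H}_1\mu_1(r,-) + \mu_3(r,\check{H}_1-) + K_2(r,\check{f}_1-) + \check{f}_2 K_1(r,-).
\end{equation*}
The map $J_1$ is constructed from a two-parameter family of metrics on $W_2\cup_{Y_2}W_1$ with a ball removed: one parameter moves the ball across the cobordism (as for $K_1$ and $K_2$), and the other is the neck-stretching parameter along $S_1$ and $Y_2$ already used in the definition of $\check{H}_1$. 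The identity is obtained by enumerating the codimension-one faces of the resulting square of moduli spaces, in the same spirit as Lemma \ref{homotopies}.

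With these structures in place, the inclusion $\check{C}_3\hookrightarrow \check{C}$ and the quotient $\check{C}\twoheadrightarrow M_{\check{f}_1}$ are chain-level module maps by inspection of the formulas, so the short exact sequence used in the proof of Lemma \ref{triangledetection} is one of $\Rin$-module chain complexes. Consequently the connecting homomorphism $\delta$, which is induced at the chain level by the map $\check{f}_2+\check{H}_1$, is automatically $\Rin$-linear on homology. The main technical obstacle lies in arranging the one- and two-parameter families above, together with their compactifications, to be regular while preserving $\jmath$-symmetry; this can be handled using the $\Pin$-equivariant $\textsc{asd}$-perturbations of Section $4.2$ of \cite{Lin} together with the parametric gluing techniques of Chapter $26$ of \cite{KM}, exactly as in the proofs of Proposition \ref{modulib2p} and Lemma \ref{homotopies}.
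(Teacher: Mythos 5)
Your proposal is correct and follows essentially the same route as the paper: a moving-puncture homotopy (your $K_1$, the paper's $\check{\mathcal{H}}_1$) assembled into an upper-triangular chain map on the cone, and a two-parameter family combining the puncture position with the neck-stretching parameter of $\check{H}_1$ (your $J_1$, the paper's $\check{\mathcal{G}}_1$) for the iterated cone, with $\Rin$-linearity of $\delta$ following from the triangular structure. The only cosmetic difference is that you phrase the action via general $\jmath$-invariant representatives $r$ of classes in $\HSf_{\bullet}(S^3)$, whereas the paper works with the specific generator $V$ represented by the unstable critical submanifold $[\Cr_{-2}]$.
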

\begin{proof}
The construction of the module structure follows the ideas in \cite{BloS}, and is described in Figure \ref{conemodule}.
\begin{figure}[here]
  \centering
\def\svgwidth{0.5\textwidth}
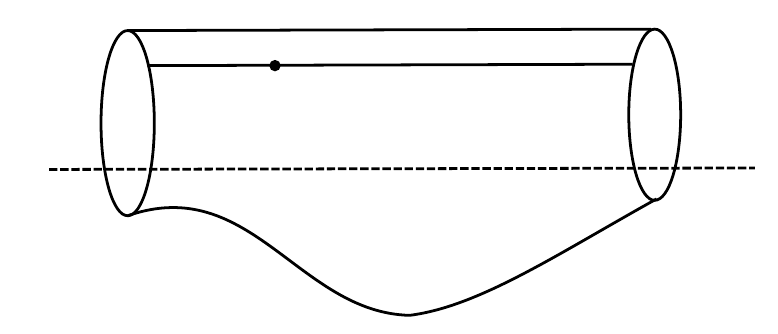
    \caption{The module structure on the mapping cone. The cobordism is a product $(Y\setminus\mathrm{nbhd}(K))\times [-1,1]$ above the dashed line.}\label{conemodule}
\end{figure} 
Consider a point $p$ which is not in the neighborhood of the knot $K$ where the surgery operation is performed. In particular we can consider $p$ as a point in both $Y_1$ and $Y_2$, and use the same ball $B_p$ embedded in $\mathbb{R}\times Y_i$ centered at $p$ to compute the map induced (for example) by $V\in\Rin$ by looking at the moduli spaces that are asymptotic to the second unstable critical submanifold $[\Cr_{-2}]$ on the additional incoming end. Here we assume that the metric has positive scalar curvature on the boundary of $B_p$ and is a product near there. Call the associated chain maps $\check{V}_1$ and $\check{V}_2$. There is a natural chain homotopy $\check{\mathcal{H}}_1$ between $\check{f}_1\circ \check{V}_1$ and $\check{V}_2\circ \check{f}_1$ obtained by considering the compactifications of the moduli spaces of trajectories parametrized by the moving point $\{t\}\times p$. Indeed, we can identify the subset
\begin{equation*}
\mathbb{R}\times(Y\setminus \mathrm{nbhd}(K))\subset W_1^*
\end{equation*}
and consider the union of the moduli spaces of trajectories on the cobordism with a puncture at $\{t\}\times p$ that are asymptotic to $[\Cr_{-2}]$ on this additional end. As usual, we can compactify these moduli spaces and use them to define a map $\mathcal{H}_1$ satisfying
\begin{equation*}
\check{\partial}_2\circ \check{\mathcal{H}}_1+\check{\mathcal{H}}_1\circ\check{\partial}_1=\check{f}_1\circ \check{V}_1+\check{V}_2\circ \check{f}_1.
\end{equation*}
The endomorphism of $M_{\check{f}_1}$ defined by the matrix
\begin{equation*}
\begin{pmatrix}
\check{V}_2 & \check{\mathcal{H}}_1 \\
0 & \check{V}_1
\end{pmatrix}
\end{equation*}
is then a chain map, and we define the induced map to be the action of $V$ of $H_*({M}_{\check{f}_1})$. The usual arguments show that this is well defined, and the maps in the triangle commute with this map.
\par
The module structure on the iterated mapping cone $H_*(\check{C})$ is defined in an analogous way. Indeed the same construction above applied to the cobordism $W_2$ leads to a chain homotopy $\check{\mathcal{H}}_2$ such that
\begin{equation*}
\check{\partial}_3\circ \check{\mathcal{H}}_2+\check{\mathcal{H}}_2\circ\check{\partial}_2=\check{f}_2\circ \check{V}_2+\check{V}_3\circ \check{f}_1,
\end{equation*}
where $\check{V}_3$ is the analogous chain map inducing the action of $V$ on $\HSt_{\bullet}(Y_3)$. We claim that there is a map $\check{\mathcal{G}}_1$ from $\check{C}_1$ to $\check{C}_3$ with the property that
\begin{equation}\label{finalh}
\check{\partial}_3\circ\check{\mathcal{G}}_1+\check{\mathcal{G}}_1\circ\check{\partial}_3=\check{f}_2\circ\check{\mathcal{H}}_1+ \check{\mathcal{H}}_2\circ\check{f}_1+\check{H}_1\circ \check{V}_1+\check{V}_3\circ \check{H}_1,
\end{equation}
so that
\begin{equation*}
\begin{pmatrix}
\check{V}_3 & \check{\mathcal{H}}_2 & \check{\mathcal{G}}_1 \\
0 & \check{V}_2 & \check{\mathcal{H}}_1\\
0 & 0 & \check{V}_1
\end{pmatrix}
\end{equation*}
is a chain map. We use this to map define the action of $V$ on $H_*(\check{C})$. The map $\check{\mathcal{G}}_1$ is constructed as follows. The cobordism with cylindrical ends attached $(W_2\circ W_1)^*$ contains a copy of $(Y\setminus \mathrm{nbhd}(K))\times\mathbb{R}$ hence in particular the line $p\times \mathbb{R}$, so we can consider the moduli space parametrized by $\mathbb{R}\times \mathbb{R}$ where the first component parametrizes the position of the point while the second parametrizes the family of metrics used to define the chain homotopy $\check{H}_1$. The map $\check{\mathcal{G}}_1$ is then defined by considering the moduli spaces of solutions parametrized by this family, and the identity (\ref{finalh}) follows as usual by identifying the contributions of the four edges of the square $[-\infty,\infty]\times [-\infty,\infty]$.
\end{proof}

\vspace{0.5cm}
\begin{proof}[Proof of Proposition \ref{mainiso}]
Consider the map $\check{G}$ on the iterated mapping cone $\check{C}$ given by
\begin{equation*}
\check{G}=
\begin{pmatrix}
\check{G}_3 & 0 & 0 \\
\check{H}_3 & \check{G}_2 & 0\\
\check{f}_3 & \check{H}_2 & \check{G}_1
\end{pmatrix}.
\end{equation*}
Here $\check{G}_2$ and $\check{G}_3$ are the maps induced by the moduli spaces parametrized by the pentagon of metrics and perturbations on the other two triple composites. They satisfy the properties of $\check{G}_1$ that we have discussed above.
We define the chain map
\begin{equation*}
\check{\varphi}: \check{C}\rightarrow \check{C}
\end{equation*}
given by
\begin{equation*}
\check{\varphi}= \check{\partial}\circ \check{G}+\check{G}\circ\check{\partial}.
\end{equation*}
The map $\check{\varphi}$ is nullhomotopic by definition, and our claim is that it also induces an isomorphism at the homology level. We have (for example) the identity
\begin{equation}\label{bigmodulez}
\check{V}\circ\check{\varphi}+\check{\varphi}\circ\check{V}=\check{\partial}\circ(\check{G}\circ\check{V}+\check{V}\circ\check{G})+(\check{G}\circ\check{V}+\check{V}\circ\check{G})\circ\check{\partial}
\end{equation}
so that the map induced in homology is a map of $\Rin$-modules. Using the relations of Lemma \ref{homotopyG} we can write the map $\check{\varphi}$ as the matrix
\begin{equation*}\label{bigmap1}
\check{\varphi}=
\begin{pmatrix}
\check{L}_3+\check{h}_3& \check{f}_2\check{G}_2+\check{G}_3\check{f}_2+\check{H}_1\check{H}_2 & \check{H}_1\check{G_1}+\check{G}_3\check{H}_1 \\
\check{g}_3 & \check{L}_2+\check{h}_2 & \check{f}_1\check{G}_1+\check{G}_2\check{f}_1+\check{H}_3\check{H}_1\\
0 & \check{g}_2 &\check{L}_1+\check{h}_1
\end{pmatrix}.
\end{equation*}
The lower diagonal terms $\check{g}_i$ are those appearing in Lemma \ref{homotopies}, and the maps $\check{h}_i$ indicate the maps induced by the two edges in the pentagon corresponding to the blowups. For example, using the notation we have adopted throughout the section, the map $\check{h}_1$ is defined using the moduli spaces parametrized by the families of metrics $Q(S_1)$ and $Q(S_2)$. Notice that these are \textit{not} chain maps. Unfortunately there is not a natural filtration respected by this map $\check{\varphi}$. Our strategy is to show that its mapping cone (which has a natural filtration) has trivial homology. In particular, we consider the chain complex whose underlying vector space is the sum of two copies of $\check{C}$ (where we distinguish the elements and groups in the first copy with the apostrophe)
\begin{equation*}
\tilde{C}=\check{C}'\oplus \check{C}
\end{equation*}
and differential
\begin{equation*}
\tilde{\partial}=
\begin{pmatrix}
\check{\partial}'_3 & \check{f}'_2 & \check{H}'_1 & \check{L}_3+\check{h}_3 & \ast & \ast \\
0 & \check{\partial}'_2 & \check{f}'_1 & \check{g}_3 & \check{L}_2+\check{h}_2 & \ast \\
0 & 0 & \check{\partial}_1 & 0 & \check{g}_2 & \check{L}_1+\check{h}_1\\
0 & 0 &0 & \check{\partial}_3 & \check{f}_2 & \check{H}_1\\
0 & 0 & 0 &0 & \check{\partial}_2 & \check{f}_1\\
0 & 0 &0&0 & 0 & \check{\partial}_1
\end{pmatrix}.
\end{equation*}
This chain complex has a natural filtration induced by the upper triangular structure of the differential $\tilde{\partial}$. Because the left lower entry of $\check{\varphi}$ vanishes, the $E^1$ page of the associated spectral sequence does not involve differentials between the corresponding subquotients of $\check{C}$ and $\check{C}'$. In particular, the $E^2$ page of the spectral sequence is given by
\begin{center}
\begin{tikzpicture}
\matrix (m) [matrix of math nodes,row sep=1.5em,column sep=2.5em,minimum width=2em]
  {
  \mathrm{coker}\check{f}'_2 & \mathrm{ker}\check{f}'_2/\mathrm{Im}\check{f}'_1 & \mathrm{ker}\check{f}'_1\\
   \mathrm{coker}\check{f}_2 & \mathrm{ker}\check{f}_2/\mathrm{Im}\check{f}_1 & \mathrm{ker}\check{f}_1\\};
  
  \path[-stealth]
  (m-2-1) edge node [above]{$\check{g}_3$} (m-1-2)
  (m-2-2) edge node [above]{$\check{g}_2$} (m-1-3)
  ;
  
  \draw(m-1-3) edge[out=160, in=20, ->] node [above]{$\check{H}'_1$}  (m-1-1);
  \draw(m-2-3) edge[out=200, in=340, ->] node [below]{$\check{H}_1$}  (m-2-1);
\end{tikzpicture}
\end{center}
where by an abuse of the notation we are considering the maps induced on the subquotients by the indicated maps. On the other hand Lemma \ref{homotopies} tells us that the chain maps $\check{g}_2$ and $\check{g}_3$ induce at the homology level the maps $\check{f}_3\circ \check{f}_2$ and $\check{f}_1\circ \check{f}_3$. In particular $\check{g}_2$ is zero on the kernel of $\check{f}_2$ and the image of $\check{g}_3$ is contained in the image of $\check{f_1}$, so the diagonal maps in the diagram above are both zero. So the subquotients of the two chain complexes $\check{C}$ and $\check{C}'$ do not interact at this page either, and the $E^3$ page is simply
\begin{center}
\begin{tikzpicture}
\matrix (m) [matrix of math nodes,row sep=2em,column sep=2em,minimum width=2em]
  {\mathrm{coker}\check{f}'_2/\mathrm{Im}\check{H}'_1 & \mathrm{ker}\check{f}'_2/\mathrm{Im}\check{f}'_1
 & \mathrm{ker}\check{f}'_1\cap \mathrm{ker}\check{H}'_1\\
\mathrm{coker}\check{f}_2/\mathrm{Im}\check{H}_1 & \mathrm{ker}\check{f}_2/\mathrm{Im}\check{f}_1
 & \mathrm{ker}\check{f}_1\cap \mathrm{ker}\check{H}_1\\};

\path[-stealth]
(m-2-1) edge node [left]{$ \check{L}_3+\check{h}_3$}(m-1-1)
(m-2-2) edge node [left]{$ \check{L}_2+\check{h}_2$}(m-1-2)
(m-2-3) edge node [left]{$ \check{L}_1+\check{h}_1$}(m-1-3);
;
\end{tikzpicture}
\end{center}
Our claim is that the vertical maps are isomorphisms, so that the $E^4$ page of the spectral sequence is zero, proving our claim that $\check{\varphi}$ is an isomorphism. From equation (\ref{bigmodulez}) it follows that the objects involved $\Rin$-modules and the vertical arrows are maps of $\Rin$-module. The maps $\check{h}_i$ above involve the multiplication by the element $Q^2$ in $\Rin$, as they are defined via moduli spaces on manifolds parametrized by family of metrics on which a blow up is already stretched to infinity (as for example in Proposition \ref{blowup}).
\par
Recall from Sections $3.3$ and $4.4$ of \cite{Lin} that the group $\HSb_{\bullet}(Y)$ is naturally a module over
\begin{equation*}
\Lambda^*\left(H_1(Y;\mathbb{Z})/\mathrm{Tor}\otimes \ztwo\right)\otimes \Rin.
\end{equation*}
Indeed, consider a closed embedded loop $\gamma$ in $Y$ representing a given homology class $x$. A neighborhood of this loop has boundary $S^2\times S^1$ and we can suppose that the metric and perturbations on this are the same as we discussed above. In particular we have that
\begin{equation*}
\HSf_{\bullet}(S^2\times S^1)= (\Rin\oplus \Rin\langle-1\rangle)\langle-1\rangle,
\end{equation*}
where the first $\Rin$ summand corresponds to the critical submanifolds $[\Cr^1_i]$ while the second one to the critical submanifolds $[\Cr^1_i]$. Again here the brackets denote the grading shift. The cobordism $[-1,1]\times Y\setminus \mathrm{nbhd}(\{0\}\times \gamma)$ induces a map
\begin{equation*}
\HSt_{\bullet}(Y)\otimes  (\Rin\oplus \Rin\{-1\})\{-1\}\rightarrow \HSt_{\bullet}(Y).
\end{equation*}
When restricting to the elements of the first $\Rin$ summand, we recover the usual $\Rin$-module structure, while the elements in the second summand correspond to the action of the elements $x\otimes \Rin$. Because of Lemma \ref{mainchain}, the term $\check{L}_{i,1}$ induce the sum of a power series in $\Rin$ with leading term $1$. All the terms in the summand $\check{L}_{i,0}$ of $\check{L}_i$ also involve the multiplication by the nilpotent element
\begin{equation*}
[K]\in \Lambda^*\left(H_1(Y;\mathbb{Z})/\mathrm{Tor}\otimes \ztwo\right).
\end{equation*}
Hence each of the vertical maps on the $E^3$ page is a sum of an isomorphism $\check{L}_{i,1}$ and a nilpotent map $\check{L}_{i,0}+\check{h}_i$ which commute because of the $\Rin$-module structure, so it is an isomorphism.
\end{proof}

\vspace{0.5cm}
From Proposition \ref{mainiso} we can conclude the main result of the present paper.
\begin{proof}[Proof of Theorem \ref{exacttr}]
The existence of the triangle follows from our discussion and Lemma \ref{triangledetection}. It is a triangle of $\Rin$-modules thanks to Lemma \ref{modulestr}. It remains to show that the third map is well defined (i.e. independent of the choices we have made). The mapping cone construction satisfies the following naturality property: given two homotopic chain maps $f,f'$ between chain complexes $C$ and $C'$ there is an isomorphism between the two mapping cones such that the mapping cone exact triangles commute. In fact, if $h$ is a chain homotopy between $f$ and $f'$ the canonical isomorphism is given by the matrix
\begin{equation*}
M(h)=\begin{pmatrix}\mathrm{Id}_C & h \\ 0 & \mathrm{Id}_{C'}\end{pmatrix}.
\end{equation*}
Furthermore given another such chain homotopy $h'$, if there exists a map
\begin{equation*}
k: C\rightarrow C'
\end{equation*}
such that $\partial'\circ k+k\circ\partial=h-h'$ then the induced isomorphism is the same, as the two maps $M(h)$ and $M(h')$ are homotopic via the map $\begin{pmatrix}0 & k \\ 0 & 0\end{pmatrix}$.
\par
In our case the two maps $f$ and $f'$ correspond to two different regular choices of metric and perturbation $(g_0, \mathfrak{p}_0)$ and $(g_1, \mathfrak{p}_1)$. In order to identify the mapping cones we consider chain homotopy $h$ constructed by considering the moduli spaces parametrized by a regular path $(g_t, \mathfrak{p}_t)$ for $t\in[0,1]$ connecting these two choices. Because the space of metrics and perturbations is contractible, any two such paths are homotopic via a generic homotopy $h_{s,t}$ (which we can think as a regular two dimensional family of metrics and perturbations) relative to their endpoints. The map $k$ is then constructed by considering the moduli spaces parametrized by this two dimensional family.
\par
The analogous construction carries over to show that the iterated mapping cone (hence the boundary map $\delta$) is natural. Suppose we have two iterated mapping cones corresponding to triples $f_1,f_2, H_1$ and $f_1', f_2',H_1'$. Given for $i=1,2$ chain homotopies $h_i$ as above between $f_i$ and $f_i'$ we claim that there is a map
\begin{equation*}
K: \check{C}_1\rightarrow \check{C}_3
\end{equation*}
satisfying the identity
\begin{equation}\label{lasth}
\partial_3\circ K+K\circ \partial_1= f_2'\circ h_1+ h_2\circ f_1 +H_1+H_1',
\end{equation}
so that the map
\begin{equation*}
\begin{pmatrix}\mathrm{Id} & h_2 & K \\ 0 & \mathrm{Id} & h_1 \\ 0 & 0 & \mathrm{Id}\end{pmatrix}
\end{equation*}
is an isomorphism between the two iterated mapping cones.
\par
In our case (where we add checks to be consistent with our notation) map $\check{K}$ is constructed by considering the moduli spaces parametrized by a pentagon of metrics (which are possibly degenerate) and perturbations. The five vertices of the pentagon correspond to the maps $\check{f}_2\circ \check{f}_1$, $\check{f}_2'\circ \check{f}_1$, $\check{f}_2'\circ \check{f}_1'$ and the two endpoints $p$ and $p'$ of the homotopies $\check{H}_1$ and $\check{H}_1'$ corresponding to the blow up
\begin{equation*}
W_2\circ W_1=\overline{W_3}\# \overline{\mathbb{C}P}^2
\end{equation*}
stretched to infinity. Four of the edges correspond to the four terms in the expression of $\check{\partial}_3\circ \check{K}+\check{K}\circ \check{\partial}_1$ in equation (\ref{lasth}). The fifth edge is a path between $p$ and $p'$ throughout degenerate metrics for which the blow up is stretched to infinity. We can choose this path so that the copy of $S^3$ along which the connected sum is performed always has positive scalar curvature. The moduli spaces parametrized by this edge do not contribute to the boundary terms for the same reason why the composite map is zero, see Proposition \ref{blowup}. These five edges can be filled to a pentagon using again the contractibility of the space of metrics and perturbations, and the induced isomorphism is well defined for the same reason.
\end{proof}

\vspace{1cm}

\section{Computations from the Gysin exact sequence}\label{compgysin}

In this Section we show that when the usual monopole Floer homology of a three manifold is very simple, the $\Pin$-monopole Floer homology can be recovered in purely algebraic terms from the Gysin exact sequence
\begin{equation}\label{gysinseq}
\dots\stackrel{\cdot Q}{\longrightarrow} \HSt_k(Y)\stackrel{\iota_*}{\longrightarrow} \HMt_k(Y)\stackrel{\pi_*}{\longrightarrow} \HSt_k(Y)\stackrel{\cdot Q}{\longrightarrow}\HSt_{k-1}(Y)\stackrel{\iota_*}{\longrightarrow}\dots
\end{equation} 
For a rational number $d$ denote by $\mathcal{T}^+_d$ the graded $\ztwo[[U]]$-module $\ztwo[U^{-1},U]]/U\ztwo[[U]]$, where $1$ has degree $d$. In particular $\mathcal{T}^+_0$ is isomorphic as a graded $\ztwo[[U]]$-module to the Floer homology group $\HMt_{\bullet}(S^3)$.
\par
It is useful to have the basic example of $S^3$ in mind, which we briefly recall. We have that
\begin{equation}\label{gysinS3}
\HMt_k=
\begin{cases}
\ztwo \text{ if }k\geq 0 \text{ even}\\
0 \text{ otherwise }
\end{cases}\qquad
\HSt_k=
\begin{cases}
\ztwo \text{ if }k\geq 0, k\neq 3 \pmod{4}\\
0 \text{ otherwise }
\end{cases}
\end{equation}
and the relevant Gysin exact sequence has the form
\begin{equation}
\xymatrix{
\ztwo_{4n+2} \ar[r] & \ztwo_{4n+2} & \ztwo_{4n+2}\ar[lld]\\
\ztwo_{4n+1} & 0 & \ztwo_{4n+1}\ar[lld] \\
\ztwo_{4n} & \ztwo_{4n} \ar[r] & \ztwo_{4n}
}
\end{equation}
for every $n\geq 0$. Here the indices denote the gradings, while the arrows are for the maps which are not trivial. Notice that these can be deduced directly by the group structure of the two groups and the exactness of the sequence. For a given rational homology sphere $Y$ and a self conjugate spin$^c$ structure $\spin$, we know that the Gysin sequence for $(Y,\spin)$ looks like the one of equation (\ref{gysinS3}) up to grading shift in degrees high enough. This motivates the following definition.
\begin{defn}
An \textit{abstract Gysin sequence} $\mathcal{G}$ consists of the following data:
\begin{itemize}
\item a $\ztwo[[U]]$-module $M$ and a $\Rin$-module $S$, both graded by a coset of $\mathbb{Z}$ in $\mathbb{Q}$ and bounded below;
\item an exact triangle of $\Rin$-modules
\begin{center}
\begin{tikzpicture}
\matrix (m) [matrix of math nodes,row sep=2em,column sep=2em,minimum width=2em]
  {
  S && S\\
  &M &\\};
  \path[-stealth]
  (m-1-1) edge node [above]{$e_*$} (m-1-3)
  (m-2-2) edge node [left]{$\pi_*$} (m-1-1)
  (m-1-3) edge node [right]{$\iota_*$} (m-2-2)  
  ;
\end{tikzpicture}
\end{center}
where the $\Rin$-module structure on $M$ is given by $Q$ acting trivially and $V$ acting as $U^2$;
\item the maps $\pi_*$ and $\iota_*$ have degree zero while $e_*$ has degree $-1$, and the triangle is isomorphic to the exact triangle (\ref{gysinS3}) in degree high enough.
\end{itemize}
\end{defn}
In particular, the module $M$ has a unique infinite dimensional $\ztwo[[U]]$-submodule, and we denote by $2h(M)$ the minimum degree in which it is non trivial. Because the exact triangle looks like (\ref{gysinS3}) in degrees high enough, we have that the group $S$ is trivial degrees $2h(M)+4N-1$ or $2h(M)+4N+1$ for $N$ big enough. Of course only one of the two possibilities is allowed.
\begin{defn}
If the group $S$ is trivial in degrees $2h(M)+4N-1$ for $N$ big enough we say that the abstract Gysin sequence $\mathcal{G}$ is \textit{even}, and we say that it is \textit{odd} otherwise.
\end{defn}
\vspace{0.5cm}
We are ready to state the main result of the present section.
\begin{prop}\label{gysin}
Suppose we are given a $\ztwo[[U]]$-module $M$ of the form
\begin{equation*}
\T^+_{2k}\oplus \ztwo^n\langle2k-1\rangle.
\end{equation*}
Then there exists a unique (up to isomorphism) abstract Gysin sequence in which $M$ fits. If $n=2m$ is even then the sequence $\mathcal{G}$ is even and
\begin{equation*}
S\cong \mathcal{S}^+_{k,k,k}\oplus \ztwo^m\langle2k-1\rangle,
\end{equation*}
while if $n=2m+1$ is odd the sequence $\mathcal{G}$ is odd and
\begin{equation*}
S\cong \mathcal{S}^+_{k+1,k-1,k-1}\oplus \ztwo^{m+1}\langle2k-1\rangle.
\end{equation*}
Suppose we are given a $\ztwo[[U]]$-module of the form
\begin{equation*}
\T^+_{2k}\oplus \ztwo^n\langle2k\rangle.
\end{equation*}
Then there exists a unique up to isomorphism abstract Gysin sequence in which $M$ fits. If $n=2m+1$ is odd then the sequence $\mathcal{G}$ is even and
\begin{equation*}
S\cong \mathcal{S}^+_{k,k,k}\oplus \ztwo^{m+1}\langle2k\rangle,
\end{equation*}
while if $n=2m$ is even the sequence $\mathcal{G}$ is odd and
\begin{equation*}
S\cong \mathcal{S}^+_{k+1,k+1,k-1}\oplus \ztwo^{m+1}\langle2k\rangle.
\end{equation*}
\end{prop}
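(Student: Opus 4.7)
The plan is to extract the $\Rin$-module $S$ from $M$ using the constraints imposed by the abstract Gysin triangle. Set $m_j = \dim_{\ztwo} M_j$, $s_j = \dim_{\ztwo} S_j$, and $q_j = \dim\mathrm{Im}(e_*\colon S_j \to S_{j-1})$. Exactness of the triangle yields the master dimensional identity
\[
2 s_j \;=\; m_j + q_j + q_{j+1},
\]
along with the bounds $0 \le q_j \le \min(s_j, s_{j-1})$ and the nilpotency $Q^3 = 0$ in $\Rin$.

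My first step is to handle the bottom of the sequence. Boundedness gives $s_j = 0$ for $j \ll 0$. Whenever $m_j = 0$, exactness of $S_j \xrightarrow{\iota_*} 0$ forces $\iota_*|_{S_j} = 0$, so $s_j = q_{j+1}$; combined with $q_j \le s_j$ this yields $s_j = q_j = q_{j+1}$ on any stretch of degrees in which $m_j$ vanishes. Propagating the initial value $s_j = 0$ upward then gives $s_j = 0$ for every $j$ strictly below the first degree $j_0$ at which $M$ is nonzero, namely $j_0 = 2k-1$ in the first case and $j_0 = 2k$ in the second.

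Next I analyze the critical degree and propagate upward. The master identity at $j_0$ reads $2 s_{j_0} = (n+\varepsilon) + q_{j_0+1}$, with $\varepsilon \in \{0,1\}$ recording whether $j_0$ coincides with the base of $\T^+_{2k}$. Integrality of $s_{j_0}$ fixes $q_{j_0+1} \bmod 2$ in terms of $n \bmod 2$; this is the parity dichotomy in the statement. Iterating the master identity in degrees $j \ge j_0+1$, and using that $m_j$ becomes the periodic profile of the tower $\T^+_{2k}$ once we pass above the torsion, a short induction shows that the parities of the $q_j$ alternate and that the entire sequence $(s_j,q_j)_{j \ge j_0}$ is determined by the parity choice. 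The two possibilities correspond precisely to the $\mathcal{S}^+_{k,k,k}$-profile and to the shifted $\mathcal{S}^+_{k+1,k-1,k-1}$-profile (respectively $\mathcal{S}^+_{k+1,k+1,k-1}$ in the second case); matching against the high-degree condition that the triangle coincides with the $S^3$ Gysin sequence confirms that these are the only admissible options and produces the stated torsion count at $j_0$.

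Finally I upgrade dimensions and ranks to the full $\Rin$-module structure. The $V$-action on $S$ commutes with $\iota_*, \pi_*, e_*$ and with $V=U^2$ on $M$; since $U^2$ is an isomorphism on $\T^+_{2k}$ in degrees $\ge 2k+4$, transport via $\iota_*$ and $\pi_*$ pins the $\V^+_d$-summands of $S$ at precisely the shifts dictated by the $\mathcal{S}^+$-profile, while the remaining finite part must sit at $j_0$. The $Q$-action is then determined by the ranks $q_j$ together with $Q^3=0$, which is exactly what rules out a long $Q$-chain that would otherwise spread the torsion contribution across several degrees. Existence is settled by writing the claimed $S$ in closed form and defining $\iota_*, \pi_*$ summand-by-summand, after which exactness is a direct check. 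The main obstacle I expect is the upward propagation step: the dimension identity alone admits a family of integer solutions for $(q_j)_{j \ge j_0+1}$, and collapsing this family to the unique orbit demanded by the statement requires simultaneously invoking integrality, the rank bounds $q_j \le \min(s_j,s_{j-1})$, the high-degree match with the $S^3$ Gysin sequence, and the nilpotency $Q^3=0$ to prevent anomalous rank configurations and to force the torsion at $j_0$ into the exact multiplicity claimed.
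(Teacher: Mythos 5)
Your proposal sets up the right framework (the dimensional identity $2s_j = m_j + q_j + q_{j+1}$, the vanishing of $S$ below $j_0$, and the high-degree boundary condition), but the central ``upward propagation'' step contains a gap that your proposal itself acknowledges without resolving. The claim that ``a short induction shows that\ldots the entire sequence $(s_j,q_j)_{j\geq j_0}$ is determined by the parity choice'' is simply false: once one passes above the torsion, the master identity at the even degree $j$ reads $2s_j = 1 + q_j + q_{j+1}$, and nothing in the identity or the rank bounds $q_j \le \min(s_j, s_{j-1})$ pins down $s_j$ itself --- the even-degree dimensions remain free parameters. The later admission (``the dimension identity alone admits a family of integer solutions'') contradicts the earlier claim, and the proposed remedy --- ``simultaneously invoking integrality, the rank bounds, the high-degree match with the $S^3$ Gysin sequence, and the nilpotency $Q^3=0$'' --- is a list of ingredients, not an argument.

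The step you gesture at but do not carry out is precisely what the paper's proof does. One must exploit the $\ztwo[[V]]$-module structure by tracking, by downward induction from high degree, a copy of the standard Gysin triangle for $S^3$: since $\iota_*$ carries the distinguished tower generator $v^{-N}\in S$ to the tower generator $u^{-2N-1}\in M$, and $V = U^2$ acts without killing $u^{-2N-1}$, the element $Vv^{-N}$ is nonzero, and its image $u^{-2N+1}$ being nonzero forces the Gysin sequence to be ``decreasing'' at that level in the sense of Lemma~\ref{increase}. Chaining this with the module structure at the subsequent degrees gives that the sequence is ``increasing'' at each level $4N$ (for $N\ge 1$), so that the ranks of $S$ at the degrees that must eventually vanish form a monotone sequence bounded by zero, hence are identically zero; this finally forces the torsion multiplicity $l$ to be zero. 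None of this is a consequence of the parity dichotomy alone. Your observation that $Q^3=0$ rules out certain rank configurations is correct and genuinely useful (and it is implicit in the paper's use of Lemma~\ref{increase}), but by itself it does not replace the $V$-module induction; a full proof along your lines would have to make that comparison explicit and rule out, degree by degree, the family of spurious solutions to the recursion.
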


\vspace{0.8cm}
The key idea is the following easy observation which readily follows from the exactness of the Gysin exact sequence.
\begin{lemma}\label{increase}
Given a Gysin exact sequence $\mathcal{G}$, suppose that for some $k$ we have
\begin{equation*}
M_{k-1}=0\qquad M_k=\ztwo\qquad M_{k+1}=0.
\end{equation*}
Then we have the two possibilities
\begin{equation}
\xymatrix{
\ztwo^{a+1}& 0 & \ztwo^{a+1}\ar[lld]\\
\ztwo^{a+1} & \ztwo_k\ar[r] & \ztwo^{a+1}\ar[lld] \\
\ztwo^{a} & 0 & \ztwo^{a}
}
\qquad\qquad
\xymatrix{
\ztwo^{a-1}& 0 & \ztwo^{a-1}\ar[lld]\\
\ztwo^{a} \ar[r]& \ztwo_k& \ztwo^{a}\ar[lld] \\
\ztwo^{a} & 0 & \ztwo^{a}
}
\end{equation}
where $a$ is non negative on the left case and positive on the right case.
\end{lemma}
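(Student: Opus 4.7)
The plan is to extract from the Gysin exact sequence a short exact subsequence localized around degree $k$, and then carry out a direct case analysis on the map $\iota_*$ there. Specifically, under the hypotheses $M_{k-1}=M_{k+1}=0$, the relevant portion of the Gysin sequence collapses to
\begin{equation*}
0 \longrightarrow S_{k+1} \stackrel{e_*}{\longrightarrow} S_k \stackrel{\iota_*}{\longrightarrow} \ztwo \stackrel{\pi_*}{\longrightarrow} S_k \stackrel{e_*}{\longrightarrow} S_{k-1} \longrightarrow 0,
\end{equation*}
where the two outer zeros come from the vanishing of $M_{k\pm 1}$. This reduction is the setup that makes the whole problem finite-dimensional and purely linear-algebraic.

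Next, since $M_k\cong\ztwo$ is one-dimensional, the map $\iota_*$ is either zero or surjective, giving two mutually exclusive cases matching the two diagrams in the statement. By exactness at $M_k$, the alternative $\iota_*=0$ forces $\pi_*$ to be injective (hence nonzero), while $\iota_*$ surjective forces $\pi_*=0$. In the first case, exactness at the first $S_k$ makes $e_*:S_{k+1}\to S_k$ an isomorphism, since it is injective with image $\ker\iota_*=S_k$, and exactness at the second $S_k$ gives a short exact sequence $0\to\mathrm{im}\,\pi_*\to S_k\to S_{k-1}\to 0$ with $\mathrm{im}\,\pi_*\cong\ztwo$. In the second case, exactness at the second $S_k$ forces $e_*:S_k\to S_{k-1}$ to be an isomorphism, while the upper $e_*:S_{k+1}\to S_k$ is injective with image $\ker\iota_*$ of codimension one in $S_k$.

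Setting $a=\dim_{\ztwo} S_{k-1}$ and reading off these isomorphisms and codimension counts in turn gives $(\dim S_{k+1},\dim S_k)=(a+1,a+1)$ in the first case and $(a-1,a)$ in the second, exactly matching the two diagrams. The non-negativity constraints $a\geq 0$ (respectively $a\geq 1$) follow from the requirement that all vector spaces involved have non-negative dimension. There is no substantial obstacle in this argument, which is essentially a short diagram chase; the only care required is to track correctly which of the arrows displayed in the stated diagrams encode the three maps $\iota_*$, $\pi_*$, $e_*$, so that the nonzero maps shown there agree with those identified by the exactness analysis above.
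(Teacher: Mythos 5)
Your diagram chase is correct and is exactly the argument the paper has in mind: the lemma is stated there as an immediate consequence of exactness of the Gysin sequence (no separate proof is written out), and your six-term exact sequence together with the dichotomy on $\iota_*$ (zero versus surjective onto $\ztwo$) is precisely that observation, with the dimension counts and the constraints $a\geq 0$, respectively $a\geq 1$, coming out as you state.
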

\begin{defn}
In the first situation we say that the Gysin sequence at the level $k$ is \textit{increasing}, while in the second situation we say that the Gysin sequence is \textit{decreasing}.
\end{defn}

\begin{proof}[Proof of Proposition \ref{gysin}]
In the proof we can assume without loss of generality that $k=0$ after a grading shift. Suppose first that that we are in the last case, so that $M_0=\ztwo^{2m+1}$. Clearly the suggested $S$ fits in an abstract Gysin exact sequence, hence we need to prove uniqueness. Dimensional considerations and the exactness of the Gysin triangle imply that $S$ is trivial in negative degrees while $\pi_*: M_0\rightarrow S_0$ is surjective. Let the rank of the latter be $m+1+l$ for some $0\leq l\leq m$. To determine the structure of the whole group we can then use Lemma \ref{increase}. In particular, because the rank of $S_1$ is odd the sequence implies that the rank of $S_{3+4N}$ will be even for all $N\geq 0$, hence the Gysin sequence is even.
\par
We claim that when we look at the Gysin triangle in degrees $4N\leq i\leq 4N+3$ contains a copy of the sequence (\ref{gysinS3}) given by the image under a suitably high power of the map $V$. We prove this by induction, as it is of course true for $N$ big enough by assumption. Denote by
\begin{equation*}
v^{-N},qv^{-N},q^2v^{-N}\text{ and }u^{-2N-1}, u^{-2N}
\end{equation*}
the respective generators of this copy. Of course
\begin{equation*}
V\cdot u^{-2N-1}=u^{-2N+1},
\end{equation*}
so as $\iota_*$ maps $v^{-N}$ to $u^{-2N-1}$, $V\cdot v^{-N}$ is not zero. Denote this element by $v^{-N+1}$. This is mapped by $\iota_*$ to $u^{-2N+1}$, so this map is not zero and the sequence is decreasing at the level $4N-1$. 
This implies that $Q\cdot v^{-N+1}$ is not zero, and we call this element $qv^{-N+1}$. Similarly, we denote the image of this element under the action of $Q$ by $q^2v^{-N+1}$. This is not zero because $M_{4N+1}$ is trivial. Also the module structure implies that $\pi_*(u^{-2N+2})=q^2v^{-N+1}$.
\par
This final observation implies that at each level $4N$ for $N\geq 1$ the sequence is increasing, so the ranks of $S_{4N-1}$ form a non decreasing sequence. As it has to be zero for $N$ big enough, all these ranks are zero. By dimensional considerations, $l$ has to be zero and the result follows.
\par
The proof in the other three cases is analogous. The only difference in the odd case is that one shows that the Gysin triangle in degrees $4N+2\leq i\leq 4N+5$ for $N\geq 0$ contains a copy of the standard one.
\end{proof}

\begin{proof}[Proof of Theorem \ref{trefoil}]
This follows readily by applying Proposition \ref{gysin} to the case of the Brieskorn spheres $\Sigma(2,3,6n\pm1)$. In particular we have
\begin{align*}
\HMt_{\bullet}(-\Sigma(2,3,12k+5))&=\mathcal{T}^+_{-2}\oplus \ztwo^{2k}\langle-2\rangle \\
\HMt_{\bullet}(\Sigma(2,3,12k+1))&=\mathcal{T}^+_{0}\oplus \ztwo^{2k}\langle-1\rangle \\
\HMt_{\bullet}(-\Sigma(2,3,12k-1))&=\mathcal{T}^+_{-2}\oplus \ztwo^{2k-1}\langle-2\rangle \\
\HMt_{\bullet}(\Sigma(2,3,12k-5))&=\mathcal{T}^+_{0}\oplus \ztwo^{2k-1}\langle-1\rangle,
\end{align*}
and the result for the given orientations follows from Poincar\'e duality.
\end{proof}

\begin{remark}
It is not surprising that in general the $\Pin$-monopole Floer homology cannot be recovered from the usual counterpart, as some ambiguities may arise. For example, if $Y$ is the Brieskorn sphere $-\Sigma(2,3,11)$, we have
\begin{equation*}
\HMt_{\bullet}(Y\#Y)\cong \T^+_{-4}\oplus \ztwo^3\langle-4\rangle\oplus \ztwo\langle-3\rangle
\end{equation*}
as it can be computed by the connected sum formula in Heegaard Floer homology (see \cite{OS2}). Then the two $\Rin$-modules
\begin{equation*}
\mathcal{S}^+_{0,-2,-2}\oplus \ztwo^2\langle-4\rangle\text{  and  } \mathcal{S}^+_{-2,-2,-2}\oplus \ztwo^2\langle-4\rangle\oplus \ztwo\langle-3\rangle
\end{equation*}
both fit in an abstract Gysin sequence. Notice that in this case we cannot recover Manolescu's correction terms either. 
\end{remark}

\vspace{1cm}
\section{Examples}\label{computations}
In this section we discuss some simple computations of the $\Pin$-monopole Floer homology groups that can be done by applying the surgery exact triangle of Theorem \ref{exacttr}. In order to get acquainted with the ideas, we first start with two examples of $\infty,0, 1$ surgery on a knot in $S^3$ in which we already know all the groups involved in the computations. In general, we will label the maps by the surgery coefficient of the manifold corresponding to the domain. In this particular case the non spin cobordism is the one from $Y_1$ to $Y_{\infty}$, so the map provided by Theorem \ref{exacttr} is $\bar{F}_1$.
\vspace{0.5cm}
\begin{example}Suppose $K$ is the unknot. Then
\begin{equation*}
Y_0=S^2\times S^1\text{ and }Y_1=S^3.
\end{equation*}
We know from the discussion in the previous section (see also Section $4.4$ of \cite{Lin} for more details) that for the unique self-conjugate spin$^c$ structure $\spin_0$ we have the isomorphisms of graded $\Rin$-modules
\begin{align*}
\HSb_{\bullet}(S^2\times S^1,\spin_0)&\cong \mathcal{S}\otimes (\ztwo\oplus\ztwo\langle -1\rangle)\\
\HSt_{\bullet}(S^2\times S^1,\spin_0)&\cong \mathcal{S}^+_{0,0,0}\otimes (\ztwo\oplus\ztwo\langle -1\rangle)
\end{align*}
In this case for all the three manifolds involved the map $i_*$ is surjective. In particular, all the triangles are determined by the one for the bar version, and will focus on the latter. The map
\begin{equation*}
\HSb_{\bullet}(W_{\infty}):\HSb_{\bullet}(S^3)\rightarrow \HSb_{\bullet}(S^2\times S^1,\spin_0),
\end{equation*}
which has degree $-1$, is an isomorphism onto the summand $\mathcal{S}\langle-1\rangle$ with lower degree while the map, while the map
\begin{equation*}
\HSb_{\bullet}(W_{0}):\HSb_{\bullet}(S^2\times S^1,\spin_0)\rightarrow \HSb_{\bullet}(S^3)
\end{equation*}
which has degree zero sends the lower summand $\mathcal{S}\langle -1\rangle$ to zero and is an isomorphism when restricted to the top summand $\mathcal{S}$. Indeed the spin connection on $W_{\infty}$ restricts the spin connection $B_0$ on $S^2\times S^1$ which is the minimum, while the spin the spin connection on $W_{0}$ restricts to the spin connection $B_1$. The corresponding moduli spaces of solutions are all copies of $\mathbb{C}P^1$ on which the evaluation maps diffeomorphically. This description can also be derived from the exact triangle for the usual monopole groups. It follows that the third map (which is called $\bar{F}_1$ in our case)
\begin{equation*}
\bar{F}_1: \HSb_{\bullet}(S^3)\rightarrow \HSb_{\bullet}(S^3)
\end{equation*}
is zero. In particular it is different from the map induced by the corresponding cobordism. In fact, the cobordism $W_1$ is a twice punctured $\overline{\mathbb{C}P}^2$ so the induced map is the multiplication by a non zero power series in which each term involves the multiplication by $Q^2$.
\end{example}

\vspace{0.5cm}
\begin{example}\label{poincare}
Let $K$ be the right handed trefoil. Then $(+1)$-surgery is the Poincar\'e homology (oriented as the boundary of the negative definite $E_8$ plumbing) and we computed in \cite{Lin} that as $\Rin$-modules we have
\begin{equation*}
\HSt_{\bullet}(Y_1)\cong \mathcal{S}^+_{-1,-1,-1}
\end{equation*}
and the map $i_*$ is surjective. Similarly, the $0$-surgery is a flat torus bundle over the circle and we showed in Section $4.4$ of \cite{Lin} that as $\Rin$-modules
\begin{align*}
\HSb_{\bullet}(S^3_0(K),\spin_0)&\cong (\V_1 \oplus \V_0)\oplus (\V_{-1}\oplus \V_{-2})\\ 
\HSt_{\bullet}(S^3_0(K),\spin_0)&\cong (\V^+_1\oplus \V^+_0)\oplus(\V^+_{-1}\oplus \V^+_{-2}).
\end{align*}
In both cases the action of $Q$ (which has degree $-1$) is an isomorphism from the first summand to the second summand and from the third summand to the forth summand. It is interesting to notice that (unlike in usual monopole Floer homology) the bar group is significantly different from the case of $S^2\times S^1$. This is another manifestation of the modulo four periodicity of the groups, and is related to the fact that the trefoil knot has Arf invariant $1$. More in detail, in the blow down the situation is analogous to that of $S^2\times S^1$, with two critical points $\alpha_1$ and $\alpha_0$ connected by two trajectories related by the action of $\jmath$. The key difference is that the family of Dirac operators has spectral flow $+1$ (so in particular odd) along these paths, and in particular that the reducible solutions lying over $\alpha_0$ are shifted up in degree by $2$. This implies that the generator of the top homology of $[\Cr^1_i]$ cancels with the generator of the bottom homology of $[\Cr^0_i]$. For the remaining spin$^c$ structures the groups is zero by the adjunction inequality. Also in this case the $i_*$ maps are surjective so it suffices to determine the reducible solutions and the bar version of the groups.
\par
The maps $\HSb_{\bullet}(W_{\infty})$ and $\HSb_{\bullet}(W_0)$ have degree respectively $-1$ and $0$, and the first map is given by multiplication by $Q$ onto the first tower, while the latter zero on the first tower and the identity on the second one. This follows from the same discussion of the moduli spaces on the cobordisms $W_{\infty}$ and $W_0$ as above, the only difference being the cancellations happening in the chain complex of $Y_0$.
\par
In particular the map $\bar{F}_1$ is non-zero in this case. It is not straightforward identify the map provided by the theorem in this case. Nevertheless we can say that the topmost homogeneous part of $\bar{F}_1$ lies in degree zero and that the latter is an isomorphism in degree divisible by four and zero otherwise. This statement follows from the degrees of the \textit{to} groups involved in the triangle and the module structure. Indeed the generator of $\HSt_0(Y_1)$ has to be mapped to the generator of $\HSt_0(S^3)$ for degree reasons, so that the degree zero part of $\bar{F}_1$ is an isomorphism. The module structure implies then that in general on the elements of degree $4k$ the map $\bar{F}_1$ is the product of the top homogeneous part by a fixed power series in $V$ with leading coefficient $1$.
\end{example}

\begin{example}\label{S237}We discuss the case of $-1, 0$ and $\infty$-surgery on the right trefoil. Again, we know all the groups involved in the triangle as $(-1/n)$-surgery on the trefoil is the Seifert fibered space $\Sigma(2,3,6n+1)$, and we take a more algebraic approach. \begin{center}
\begin{tikzpicture}
\matrix (m) [matrix of math nodes,row sep=2em,column sep=1em,minimum width=2em]
  {
  \HSt_{\bullet}(\Sigma(2,3,7)) && \HSt_{\bullet}(Y_0)\\
  &\HSt_{\bullet}(S^3) &\\};
  \path[-stealth]
  (m-1-1) edge node [above]{$\check{F}_{-1}$} (m-1-3)
  (m-2-2) edge node [left]{$\check{F}_{\infty}$} (m-1-1)
  (m-1-3) edge node [right]{$\check{F}_0$} (m-2-2)  
  ;
\end{tikzpicture}
\end{center}
Again $\check{F}_{-1}$ and $\check{F}_{0}$ have degree respectively $-1$ and $0$. From this the maps are easily determined (using again the fact that the reduced Floer groups are trivial). In particular the map
\begin{equation*}
\check{F}_{-1}: \HSt_{-1}(\Sigma(2,3,7))\rightarrow \HSt_{-2}(Y_0)
\end{equation*}
is an isomorphism for degree reasons. The module structure implies then that $\check{F}_{-1}$ in an isomorphism onto the image in degrees $4k$ and $4k-1$ for $k\geq 0$, and $\check{F}_0$ is an isomorphism onto the image in degrees $4k$ and $4k+1$ for $k\geq0$. From this, as in the previous example we see that $\check{F}_{\infty}$ has top degree zero and this is an isomorphism onto the image in degree $4k+2$, and zero otherwise.
\end{example}

The computation of the third map (the one corresponding to the non spin cobordism) in the two examples we have just discussed ($\bar{F}_1$ and $\bar{F}_{\infty}$ respectively) only relies on the reducible solutions, so they hold in general for the map $\bar{F}_{1/(n+1)}$ in the $0, 1/(n+1),1/n$ surgery triangle for a knot in a homology sphere. It is important to remark that there is a difference in the case $n$ is even or odd related to relative grading of the reducibles. As we have discussed both parities in our examples, we have proved the following result concerning the image of $i_*$. 
\begin{lemma}\label{arfmap}
Suppose the knot $K$ has Arf invariant $1$. Then in the setting as above, the map $\bar{F}_{1/(n+1)}$ corresponding to the is injective of the top tower and zero of the other two towers.
\end{lemma}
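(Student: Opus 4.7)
The plan is to reduce the general Arf$(K)=1$ statement to the explicit computations already carried out for the right-handed trefoil in Examples \ref{poincare} and \ref{S237}. First I would observe that $\bar{F}_{1/(n+1)}$ on the bar version is, by its construction as the connecting homomorphism of the iterated mapping cone in Proposition \ref{mainiso}, determined entirely by the reducible moduli spaces on the cobordisms $W_1,W_2,W_3$ and on the one- and two-parameter families of metrics used to build $\check{H}_i$ and $\check{G}_i$. In particular the map depends only on the topology of these cobordisms and on the Dirac spectral flow along their reducible loci, and no irreducible dynamics intervenes.

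Next I would isolate the knot-dependent input. The cobordisms $W_1, W_2, W_3$ and their various composites depend only on the surgery coefficients $0, 1/(n+1), 1/n$, not on $K$; hence they agree topologically with those produced by any other knot in $S^3$. The only genuinely knot-dependent ingredient is the mod-$2$ spectral flow of the Dirac operator along the arc between the two spin connections $\alpha_0, \alpha_1$ on $Y_0=S^3_0(K)$, and this spectral flow equals Arf$(K)$ modulo $2$, as follows from the standard identification of the Rokhlin invariant of $(Y_0,\spin_0)$ with the Arf invariant, or equivalently from an equivariant index calculation on the trace of the surgery viewed as a spin filling. Thus for Arf$(K)=1$ the relative grading of the reducible critical submanifolds lying above $\alpha_0$ and $\alpha_1$ is shifted by an odd integer, exactly as in the trefoil case analyzed in Example \ref{poincare}. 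Consequently the bar-level cobordism maps $\HSb_{\bullet}(W_1)$ and $\HSb_{\bullet}(W_3)$, together with the reducible contributions to $\check{H}_i$ and $\check{G}_i$, coincide with those in the trefoil examples, the parity of $n$ selecting which of Examples \ref{poincare} and \ref{S237} supplies the model.

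Given this identification, exactness of the bar surgery triangle together with the $\Rin$-module structure forces $\bar{F}_{1/(n+1)}$ to have the same shape as in the two model cases: its top-degree part sits in degree zero, it is an isomorphism onto its image when restricted to the top tower under the $Q$-action, and it vanishes on the other two towers. Since $i_*$ is surjective onto the three-tower submodule of $\HSt_{\bullet}(Y_{1/(n+1)})$ in sufficiently high degree and $\check{F}_{1/(n+1)}$ is a morphism of $\Rin$-modules, this description transfers to the restriction to the image of $i_*$, which is exactly the content of the lemma. The main obstacle I expect is the spectral-flow step: showing that Arf$(K)=1$ produces a spectral-flow pattern matching the trefoil (not just any odd shift) uniformly across all knots requires some care with sign conventions, but it should follow cleanly from the standard relation between the Arf invariant and the Rokhlin invariant of zero-surgery.
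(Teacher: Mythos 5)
Your proposal is correct and takes essentially the same route as the paper: the bar-level map is governed solely by the reducible solutions, the knot enters only through the odd Dirac spectral flow on $S^3_0(K)$ (equivalently the Arf/Rokhlin data), and the two parities of $n$ reduce the general case to the model computations of Examples \ref{poincare} and \ref{S237}. The paper states this reduction more tersely, but the underlying argument is the one you give.
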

The condition on the Arf invariant implies that the Rokhlin invariant of $1/(n+1),1/n$ are different, so that we are dealing with the cases of Example \ref{poincare} and \ref{S237}.
\\
\par
Finally, we know show how to use the knowledge of the third map in order to provide a previously inaccessible computation. The same ideas will be used in the next section to compute the correction terms of $(\pm1)$-surgery on alternating knots. 

\begin{proof}[Proof of Theorem \ref{figure8}]
As in \cite{OSd} have that $(+1)$-surgery on the figure eight knot is $\Sigma(2,3,7)$. Furthermore, we know that for $\spin\neq \spin_0$ the Floer groups of $E_0$ vanish because of the adjunction inequality (see Corollary $40.1.2$ in \cite{KM}). Using the fact that the reduced groups of $S^3$ and $\Sigma(2,3,7)$ are zero we can determine the Floer group $\HSt_{\bullet}(E_0,\spin_0)$ as follows. In the triangle
 \begin{center}
\begin{tikzpicture}
\matrix (m) [matrix of math nodes,row sep=2em,column sep=1em,minimum width=2em]
  {
  \HSt_{\bullet}(S^3) && \HSt_{\bullet}(E_0)\\
  &\HSt_{\bullet}(\Sigma(2,3,7)) &\\};
  \path[-stealth]
  (m-1-1) edge node [above]{$\check{F}_{\infty}$} (m-1-3)
  (m-2-2) edge node [left]{$\check{F}_{1}$} (m-1-1)
  (m-1-3) edge node [right]{$\check{F}_0$} (m-2-2)  
  ;
\end{tikzpicture}
\end{center}
the (top degree part) of the map $\check{F}_{\infty}$ is determined in light of Lemma \ref{arfmap}. The result then follows as we know the degrees of $\check{F}_{1}$ and $\check{F}_{0}$. Finally, the other cases follow from Proposition \ref{gysin}.
\end{proof}

\vspace{1cm}

\section{Surgery on alternating knots}\label{alternating}
In this final section we show how to compute the Manolescu's correction terms of the homology spheres obtained by surgery on alternating knots. This relies on the computation of the usual monopole Floer homology groups provided in \cite{OSalt} (via the isomorphism between the theories due to Kutluhan-Lee-Taubes \cite{HFHM1}, \cite{HFHM2}, \cite{HFHM3}, \cite{HFHM4}, \cite{HFHM5} and Colin-Ghiggini-Honda \cite{CGH}, \cite{CGH1}, \cite{CGH2}, \cite{CGH3} plus some additional considerations regarding absolute gradings) and some algebraic observations. We recall the main result from \cite{OSalt}. Given a knot $K$, its torsion coefficient $t_s(K)$ for an integer $s$ is defined to be
\begin{equation*}
t_s(K)=\sum_{j=1}^{\infty}j a_{|s|+j},
\end{equation*}
where the $a_s$ are the coefficients of the symmetrized Alexander polynomial of $K$. For $\sigma\in 2\mathbb{Z}$ and an integer $s$ we define
\begin{equation*}
\delta(\sigma, s)=\max\left(0, \left\lceil \frac{|\sigma|-2|s|}{4}\right\rceil\right).
\end{equation*}

\begin{thm}[Theorem $1.4$ in \cite{OSalt}]
Let $K$ be an alternating knot oriented so that $\sigma=\sigma(K)\leq 0$, and let $S^3_0(K)$ be the three manifold obtained by zero surgery. Then, letting
\begin{equation*}
b_s=(-1)^{s+\frac{\sigma}{2}}(\delta(\sigma,s)-t_s(K))
\end{equation*}
we have that:
\begin{itemize}
\item for all $s>0$ we have a $\ztwo[[U]]$ module isomorphism
\begin{equation*}
\HMt_{\bullet}(S^3_0(K), \spin_s)\cong \ztwo^{b_s}\oplus (\ztwo[U]/U^{\delta(\sigma,s)})
\end{equation*}
with the first summand supported in degree $s+\sigma/2\mod 2$ while the second summand lies in odd degree;
\item we have the isomorphism of graded modules
\begin{equation*}
\HMt_{\bullet}(S^3_0(K),\spin_0)\cong \mathcal{T}^+_{-1}\oplus \mathcal{T}^+_{-2\delta(\sigma,0)}\oplus \ztwo^{b_0}\langle \sigma/2-1\rangle.
\end{equation*}
\end{itemize}
\end{thm}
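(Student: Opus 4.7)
The plan is to deduce this result by combining the Heegaard Floer calculation of Ozsváth–Szabó with the isomorphism between Heegaard Floer and monopole Floer homology (Kutluhan–Lee–Taubes and Colin–Ghiggini–Honda), so that the computation reduces to a purely Heegaard-theoretic statement about alternating knots.

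First I would recall the key input: for an alternating knot $K$ the knot Floer homology $\widehat{\mathit{HFK}}(K,s)$ is thin, in the sense that it is supported on a single diagonal in the $(M,A)$-plane whose slope is $1$ and whose offset is determined by the signature; concretely, $\widehat{\mathit{HFK}}_m(K,s)$ has total rank $|a_s|$ and is concentrated in Maslov grading $m \equiv s + \sigma/2 \pmod{2}$. This is the classical thinness statement for alternating knots and feeds directly into the surgery formula. Next, I would apply the Ozsváth–Szabó large-surgery / mapping cone formula for $0$-surgery to produce $\mathit{HF}^+(S^3_0(K),\mathfrak{s}_s)$ as the homology of a mapping cone built from $A^+_s$ and $B^+_s$ complexes. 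Thinness of $\widehat{\mathit{HFK}}$ forces each $A^+_s$ to split as a tower $\mathcal{T}^+$ plus a reduced finite-dimensional part whose rank is controlled by the torsion coefficients $t_s(K)$ and whose Maslov support is pinned down by the diagonal condition; the truncation length of the surviving tower is precisely the clamping $\delta(\sigma,s)$ appearing in the statement.

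For $s \neq 0$ the spin${}^c$-structure is non-torsion, so $\mathit{HF}^\infty$ vanishes and one is left with a finite group whose $U$-action structure is $\mathbb{F}^{b_s} \oplus \mathbb{F}[U]/U^{\delta(\sigma,s)}$, with the two summands sitting in the announced parities of the Maslov grading. For $s=0$ (the torsion spin${}^c$-structure) the two infinite towers $\mathcal{T}^+_{-1}$ and $\mathcal{T}^+_{-2\delta(\sigma,0)}$ come from the two $d$-invariants of $(S^3_0(K),\mathfrak{s}_0)$, which for alternating knots are expressible in closed form via $\sigma$; their absolute gradings are identified from the known correction-term formulas (again using thinness to run the mapping cone explicitly). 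The reduced finite summand in degree $\sigma/2-1$ is then read off from the remaining copies of the diagonal in the cone.

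The last step is to translate the conclusion to $\widehat{\mathit{HM}}$ via the KLT–CGH isomorphism. As graded $\mathbb{F}[[U]]$-modules the two theories agree, so the structural statement transfers immediately; the only care needed is that the absolute $\mathbb{Q}$-gradings in $\mathit{HF}^+$ and $\widehat{\mathit{HM}}$ match (up to orientation convention), which has been verified in the literature. I expect the main obstacle to be bookkeeping on the absolute gradings: between (i) the Maslov grading shifts introduced by the mapping cone formula, (ii) the parity conventions $(-1)^{s+\sigma/2}$ in the definition of $b_s$, and (iii) the grading-matching of $\mathit{HF}^+ \leftrightarrow \widehat{\mathit{HM}}$, one has to thread several normalization choices to land on the precise gradings $s + \sigma/2 \bmod 2$ and $\sigma/2-1$ appearing in the statement, rather than on something shifted by $\pm 1$.
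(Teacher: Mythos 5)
Your sketch goes well beyond what the paper actually does: the statement is labeled ``Theorem $1.4$ in \cite{OSalt}'' and is a citation, not something the paper proves. The paper invokes Ozsv\'ath--Szab\'o's Heegaard Floer computation as a black box and translates it to monopole Floer homology via the Kutluhan--Lee--Taubes and Colin--Ghiggini--Honda isomorphism, so the bulk of your argument (thinness of knot Floer homology for alternating knots, the mapping cone or large surgery formula, identification of the $d$-invariants) is a reconstruction of the original Ozsv\'ath--Szab\'o proof and not what this paper supplies. As a minor aside, the original Ozsv\'ath--Szab\'o argument predates the integer surgery mapping cone formula and instead runs the surgery long exact sequence with adjunction, though a mapping cone route would also work.

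The genuine gap is at the end of your proposal, and it is exactly the one substantive point the paper makes in lieu of a proof. You assert that the absolute $\mathbb{Q}$-gradings of $\mathit{HF}^+$ and $\HMt_{\bullet}$ agree under the KLT/CGH isomorphism and that this ``has been verified in the literature.'' The paper states the opposite: that isomorphism ``is only known to hold at the level of relatively graded groups.'' Without an absolute-grading match, the $\HMt$-version of the theorem, with its explicit gradings $-1$, $-2\delta(\sigma,0)$, $\sigma/2-1$, and parity $s+\sigma/2$, does not simply transfer, and the appeal to the literature is not available. The paper's workaround is to anchor the absolute grading directly on the monopole side via the surgery exact triangle: the cobordism map $\HMt_{\bullet}(S^3)\to \HMt_{\bullet}(S^3_0(K),\spin_0)$ has absolute degree $-1$ (after accounting for the $-b_1(Y)/2$ shift between the Heegaard and monopole grading conventions), and a rank count forces it to be an isomorphism onto the infinite tower, pinning that tower to $\mathcal{T}^+_{-1}$; the remaining gradings then follow from the module structure and the exact sequence. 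Your proposal needs this or an equivalent grading anchor to be complete.
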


As briefly mentioned above it is important to notice that the isomorphism between monopole Floer homology and Heegaard Floer homology is only known to hold at the level of relatively graded groups. Nevertheless, in our simple case it can be seen to hold at the level absolutely graded groups thanks to the usual surgery exact triangle. Indeed the maps in the triangle
\begin{align*}
\HMt_{\bullet}(S^3) & \rightarrow \HMt_{\bullet}(S^3_0(K),\spin_0)\\
\HMt_{\bullet}(S^3_0(K),\spin_0) & \rightarrow \HMt_{\bullet}(S^3_1(K))
\end{align*}
have absolute degrees respectively $-1$ and $0$ (recall that the absolute gradings in monopole Floer homology are shifted of $-b_1(Y)/2$ with respect to those in Heegaard Floer homology), and the rank of the groups involved implies that the first map is an isomorphism onto the $\mathcal{T}^+_{-1}$ summand.
\par
The last group in the statement of the result is the sum of two particularly simple modules, namely $\T^+_{-1}$ and $\T^+_{-2k}\oplus\ztwo^{b_0}$, where the degree of the third summand is either $-2k$ or $-2k-1$. 
\\ 
\par
We are now ready to provide the main computation in the present paper.

\begin{proof}[Proof of Theorem \ref{altcorrection}]
The result follows from an application of the surgery exact triangle together with the result on simple monopole Floer homology groups discussed above. Notice that we are only interested in the image of the map $i_*$ (as a graded $\Rin$-module) as our goal is to compute Manolescu's correction terms. In particular, because of the module structure the map in the usual monopole Floer homology surgery exact triangle
\begin{equation*}
\HMt_{\bullet}(S^3)\rightarrow \HMt_{\bullet}(S^3_0(K)),
\end{equation*}
is an isomorphism onto the summand $\T^+_{-1}$ in the direct summand $\HMt_{\bullet}(S^3_0(K),\spin_0)$, while is zero onto the others. Hence $\HMt_{\bullet}(S^3_1(K))$ is isomorphic to a direct sum
\begin{equation*}
\T^+_{-2\delta(\sigma,0)}\oplus \ztwo^{b_0}\oplus(\bigoplus_{c_1(\spin)\neq 0} \HMt_{\bullet}(S^3_0(K), \spin)).
\end{equation*}
The first two summands is a module of the form of Proposition \ref{gysin}. Furthermore the Gysin sequence for the remaining summand is clear as the spin$^c$ structure are conjugate in pairs and the sequence is functorial under cobordism maps. We can then apply Proposition \ref{gysin} and determine $\HSt_{\bullet}(S^3_1(K))$, and then reconstruct $i_*\left(\HSb_{\bullet}(S^3_0(K),\spin_0)\right)$ by applying the surgery exact triangle. The details in the case of a knot with Arf invariant $1$ are analogous to those of the surgeries on the trefoil and figure eight knot discussed in Section \ref{computations}, while the case of a knot with Arf invariant $0$ is significantly simpler (as the third map in the bar version vanishes in that case). From the knowledge of $i_*\left(\HSb_{\bullet}(S^3_0(K),\spin_0)\right)$ we can then easily compute Manolescu's correction terms by iteratively applying the surgery exact triangle. As an example, we focus on the case in which $K$ has signature $-8$ and Arf invariant $1$ as it is particularly interesting in light of Remark \ref{notSFS} in the Introduction. In this case we have that
\begin{equation*}
i_*\left(\HSb_{\bullet}(S^3_1(K))\right)\cong \mathcal{S}^+_{-1,-3,-3}.
\end{equation*}
This implies (as in the proof of Theorem \ref{figure8} in the previous section) that as an absolutely graded $\Rin$-module we have
\begin{equation*}
i_*\left(\HSb_{\bullet}(S^3_0(K),\spin_0)\right)\cong (\V^+_1\oplus \V^+_0)\oplus (\V^+_{-5}\oplus \V^+_{-2})
\end{equation*}
where the $Q$ action maps the first tower onto the second tower and the third tower onto the fifth tower. Applying the surgery exact triangle again, we then obtain
\begin{equation*}
i_*\left(\HSb_{\bullet}(S^3_{-1}(K))\right)\cong \mathcal{S}^+_{1,-1,-3},
\end{equation*}
from which the result follows.
\end{proof}

\vspace{0.5cm}
We conclude by giving a proof of Proposition \ref{sfs}.
\begin{proof}[Proof of Proposition \ref{sfs}]
It is shown in \cite{MOY} that for a suitable choice of orientation of $Y$, there is a choice of metric and perturbation such that all the critical points (in the blow down) have even degree. Suppose that $Y$ is oriented in the same way as \cite{MOY}. This implies that the generators of the two and zero dimensional homology of the reducible critical submanifolds are not affected by the differential in the chain complex computing $\HSt_{\bullet}(Y,\spin)$. Consider the maximum degree at which a generator of the one dimensional homology of a reducible critical submanifold is involved in a non trivial differential. As there are no irreducible critical points of odd degree, the module structure implies that the middle tower of $i_*(\HSb_{\bullet}(Y,\spin))$ stops at that level. For the same reason, the bottom tower also stops at that level, hence we have that $\alpha(Y,\spin)$ and $\beta(Y,\spin)$ coincide. 
\par
Finally, in the orientation is the opposite, the same argument applied to $-Y$ implies that $\beta(Y,\spin)$ and $\gamma(Y,\spin)$ coincide.
\end{proof}

\vspace{1cm}

\bibliographystyle{alpha}
\bibliography{biblio}

\begin{thebibliography}{KMOS07}

\bibitem[Blo11]{BloS}
Jonathan~M. Bloom.
\newblock A link surgery spectral sequence in monopole {F}loer homology.
\newblock {\em Adv. Math.}, 226(4):3216--3281, 2011.

\bibitem[Blo13]{Blo}
Jonathan Bloom.
\newblock The combinatorics of {M}orse theory with boundary.
\newblock {\em Proceedings of 19th Gokova Geometry-Topology Conference}, pages
  44--88, 2013.

\bibitem[CGH11]{CGH}
Vincent Colin, Paolo Ghiggini, and Ko~Honda.
\newblock {HF}={ECH} via open book decompositions: a summary.
\newblock {\em preprint}, arXiv:math/1103.1290, 2011.

\bibitem[CGH12a]{CGH1}
Vincent Colin, Paolo Ghiggini, and Ko~Honda.
\newblock The equivalence of {H}eegaard {F}loer homology and embedded contact
  homology via open book decompositions {I}.
\newblock {\em preprint}, arXiv:math/1208.1074, 2012.

\bibitem[CGH12b]{CGH2}
Vincent Colin, Paolo Ghiggini, and Ko~Honda.
\newblock The equivalence of {H}eegaard {F}loer homology and embedded contact
  homology via open book decompositions {II}.
\newblock {\em preprint}, arXiv:math/1208.1077, 2012.

\bibitem[CGH12c]{CGH3}
Vincent Colin, Paolo Ghiggini, and Ko~Honda.
\newblock The equivalence of {H}eegaard {F}loer homology and embedded contact
  homology via open book decompositions {III}: from hat to plus.
\newblock {\em preprint}, arXiv:math/1208.1526, 2012.

\bibitem[DK90]{DK}
S.~K. Donaldson and P.~B. Kronheimer.
\newblock {\em The geometry of four-manifolds}.
\newblock Oxford Mathematical Monographs. The Clarendon Press, Oxford
  University Press, New York, 1990.
\newblock Oxford Science Publications.

\bibitem[Flo90]{Flo}
Andreas Floer.
\newblock Instanton homology, surgery, and knots.
\newblock In {\em Geometry of low-dimensional manifolds, 1 ({D}urham, 1989)},
  volume 150 of {\em London Math. Soc. Lecture Note Ser.}, pages 97--114.
  Cambridge Univ. Press, Cambridge, 1990.

\bibitem[Fuk96]{Fuk}
Kenji Fukaya.
\newblock Floer homology of connected sum of homology {$3$}-spheres.
\newblock {\em Topology}, 35(1):89--136, 1996.

\bibitem[Fur01]{Fur}
M.~Furuta.
\newblock Monopole equation and the {$\frac{11}8$}-conjecture.
\newblock {\em Math. Res. Lett.}, 8(3):279--291, 2001.

\bibitem[Gor91]{Gor}
Cameron~McA. Gordon.
\newblock Dehn surgery on knots.
\newblock In {\em Proceedings of the {I}nternational {C}ongress of
  {M}athematicians, {V}ol.\ {I}, {II} ({K}yoto, 1990)}, pages 631--642. Math.
  Soc. Japan, Tokyo, 1991.

\bibitem[KLT10a]{HFHM2}
Cagatay Kutluhan, Yi-Jen Lee, and Clifford Taubes.
\newblock {HF}={HM} {II} : {R}eeb orbits and holomorphic curves for the
  ech/{H}eager-{F}loer correspondence.
\newblock {\em preprint}, arXiv:math/1008.1595, 2010.

\bibitem[KLT10b]{HFHM3}
Cagatay Kutluhan, Yi-Jen Lee, and Clifford Taubes.
\newblock {HF}={HM} {III} : {H}olomorphic curves and the differential for the
  ech/{H}eegaard-{F}loer correspondence.
\newblock {\em preprint}, arXiv:math/1010.3456, 2010.

\bibitem[KLT11a]{HFHM1}
Cagatay Kutluhan, Yi-Jen Lee, and Clifford Taubes.
\newblock {HF}={HM} {I} : {H}eegaard {F}loer homology and {S}eiberg--{W}itten
  {F}loer homology.
\newblock {\em preprint}, arXiv:math/1007.1979, 2011.

\bibitem[KLT11b]{HFHM4}
Cagatay Kutluhan, Yi-Jen Lee, and Clifford Taubes.
\newblock {HF}={HM} {IV} : {T}he {S}eiberg-{W}itten {F}loer homology and ech
  correspondence.
\newblock {\em preprint}, arXiv:math/1007.2297, 2011.

\bibitem[KLT12]{HFHM5}
Cagatay Kutluhan, Yi-Jen Lee, and Clifford Taubes.
\newblock {HF}={HM} {V} : {S}eiberg-{W}itten-{F}loer homology and handle
  addition.
\newblock {\em preprint}, arXiv:math/1204.0115, 2012.

\bibitem[KM07]{KM}
Peter Kronheimer and Tomasz Mrowka.
\newblock {\em Monopoles and three-manifolds}, volume~10 of {\em New
  Mathematical Monographs}.
\newblock Cambridge University Press, Cambridge, 2007.

\bibitem[KMOS07]{KMOS}
P.~Kronheimer, T.~Mrowka, P.~Ozsv{\'a}th, and Z.~Szab{\'o}.
\newblock Monopoles and lens space surgeries.
\newblock {\em Ann. of Math. (2)}, 165(2):457--546, 2007.

\bibitem[Lin15]{Lin}
Francesco Lin.
\newblock {A} {M}orse-{B}ott approach monopole {F}loer homology and the
  {T}riangulation {C}onjecture.
\newblock {\em preprint}, arXiv:math/1404.4561, 2015.

\bibitem[Lip14]{Lip}
Max Lipyanskiy.
\newblock Geometric homology.
\newblock {\em Preprint}, arXiv:math/1303.2354, 2014.

\bibitem[Man13a]{Man3}
Ciprian Manolescu.
\newblock The {C}onley index, gauge theory, and triangulations.
\newblock {\em J. Fixed Point Theory Appl.}, 13(2):431--457, 2013.

\bibitem[Man13b]{Man2}
Ciprian Manolescu.
\newblock {P}in(2)-equivariant {S}eiberg-{W}itten {F}loer homology and the
  {T}riangulation {C}onjecture.
\newblock {\em preprint}, arXiv:math/1303.2354, 2013.

\bibitem[MOY97]{MOY}
Tomasz Mrowka, Peter Ozsv{\'a}th, and Baozhen Yu.
\newblock Seiberg-{W}itten monopoles on {S}eifert fibered spaces.
\newblock {\em Comm. Anal. Geom.}, 5(4):685--791, 1997.

\bibitem[OS03a]{OSd}
Peter Ozsv{\'a}th and Zolt{\'a}n Szab{\'o}.
\newblock Absolutely graded {F}loer homologies and intersection forms for
  four-manifolds with boundary.
\newblock {\em Adv. Math.}, 173(2):179--261, 2003.

\bibitem[OS03b]{OSalt}
Peter Ozsv{\'a}th and Zolt{\'a}n Szab{\'o}.
\newblock Heegaard {F}loer homology and alternating knots.
\newblock {\em Geom. Topol.}, 7:225--254 (electronic), 2003.

\bibitem[OS04]{OS2}
Peter Ozsv{\'a}th and Zolt{\'a}n Szab{\'o}.
\newblock Holomorphic disks and three-manifold invariants: properties and
  applications.
\newblock {\em Ann. of Math. (2)}, 159(3):1159--1245, 2004.

\bibitem[OS05]{OSbr}
Peter Ozsv{\'a}th and Zolt{\'a}n Szab{\'o}.
\newblock On the {H}eegaard {F}loer homology of branched double-covers.
\newblock {\em Adv. Math.}, 194(1):1--33, 2005.

\bibitem[Sto15]{Sto}
Matthew Stoffregen.
\newblock {P}in(2)-equivariant {S}eiberg-{W}itten {F}loer homology of {S}eifert
  fibrations.
\newblock {\em preprint}, 2015.

\end{thebibliography}

\end{document}